\DeclareMathOperator{\ob    }{ob}
\DeclareMathOperator{\Hom   }{Hom}
\DeclareMathOperator{\el    }{el}
\DeclareMathOperator{\cod   }{cod}
\newcommand{\two}{\ensuremath{\mathrm{2}}}
\newcommand{\op}{\ensuremath{\mathrm{op}}}
\newcommand{\coop}{\ensuremath{\mathrm{coop}}}
\newcommand{\Cat}{\ensuremath{\mathrm{Cat}}}
\newcommand{\bicat}{\ensuremath{\mathrm{Bicat}}}
\newcommand{\Set}{\ensuremath{\mathrm{Set}}}
\newcommand{\Gray}{\ensuremath{\mathrm{Gray}}}
\newcommand{\Fam}{\ensuremath{\mathrm{Fam}}}
\newcommand{\Fib}{\ensuremath{\mathrm{Fib}}}
\newcommand{\Mon}{\ensuremath{\mathrm{Mon}}}
\newcommand{\Mnd}{\ensuremath{\mathrm{Mnd}}}
\newcommand{\Enr}{\ensuremath{\mathrm{Enr}}}
\newcommand{\Alg}{\ensuremath{\mathrm{Alg}}}
\newcommand{\A}{\mathscr{A}}
\newcommand{\B}{\mathscr{B}}
\newcommand{\C}{\mathscr{C}}
\newcommand{\D}{\mathscr{D}}
\newcommand{\E}{\mathscr{E}}
\newcommand{\K}{\mathscr{K}}
\newcommand{\V}{\mathscr{V}}
\mathchardef\mhyphen="2D
\renewcommand{\bar}{\overline}
\newcommand{\To}{\Rightarrow}
\newcommand{\e}[1]{{{#1}\_}}
\renewcommand{\c}[2]{\varphi({#1},{#2})}
\newcommand{\comma}[2]{{({#1}\downarrow{#2})}}
\newcommand{\cd}[2][]{\vcenter{\hbox{\xymatrix#1{#2}}}}
\newcommand{\Mapsto}[1]{\cd[]{ \ar@{|->}[r]^{{#1}} & }}
\newcommand{\pullback}[1][dr]{\save*!/#1-1.2pc/#1:(-1,1)@^{|-}\restore}
\newcommand{\ltwocell    }[3][0.5]{\ar@{}[#2] \ar@{=>}?(#1)+/r 0.2cm/;?(#1)+/l 0.2cm/^{#3}}
\newcommand{\rtwocell    }[3][0.5]{\ar@{}[#2] \ar@{=>}?(#1)+/l 0.2cm/;?(#1)+/r 0.2cm/^{#3}}
\newcommand{\dtwocell    }[3][0.5]{\ar@{}[#2] \ar@{=>}?(#1)+/u 0.2cm/;?(#1)+/d 0.2cm/^{#3}}
\newcommand{\utwocell    }[3][0.5]{\ar@{}[#2] \ar@{=>}?(#1)+/d 0.2cm/;?(#1)+/u 0.2cm/_{#3}}
\newcommand{\dltwocell   }[3][0.5]{\ar@{}[#2] \ar@{=>}?(#1)+/ur 0.2cm/;?(#1)+/dl 0.2cm/^{#3}}
\newcommand{\drtwocell   }[3][0.5]{\ar@{}[#2] \ar@{=>}?(#1)+/ul 0.2cm/;?(#1)+/dr 0.2cm/^{#3}}
\newcommand{\ultwocell   }[3][0.5]{\ar@{}[#2] \ar@{=>}?(#1)+/dr 0.2cm/;?(#1)+/ul 0.2cm/^{#3}}
\newcommand{\urtwocell   }[3][0.5]{\ar@{}[#2] \ar@{=>}?(#1)+/dl 0.2cm/;?(#1)+/ur 0.2cm/^{#3}}
\newcommand{\dotltwocell }[3][0.5]{\ar@{}[#2] \ar@{:>}?(#1)+/r 0.2cm/;?(#1)+/l 0.2cm/^{#3}}
\newcommand{\dotrtwocell }[3][0.5]{\ar@{}[#2] \ar@{:>}?(#1)+/l 0.2cm/;?(#1)+/r 0.2cm/^{#3}}
\newcommand{\dotdtwocell }[3][0.5]{\ar@{}[#2] \ar@{:>}?(#1)+/u 0.2cm/;?(#1)+/d 0.2cm/^{#3}}
\newcommand{\dotutwocell }[3][0.5]{\ar@{}[#2] \ar@{:>}?(#1)+/d 0.2cm/;?(#1)+/u 0.2cm/_{#3}}
\newcommand{\doturtwocell}[3][0.5]{\ar@{}[#2] \ar@{:>}?(#1)+/dl 0.2cm/;?(#1)+/ur 0.2cm/^{#3}}
\newcommand{\twocong    }[2][0.5]{\ar@{}[#2] \save ?(#1)*{\cong} \restore}
\newcommand{\twoeq      }[2][0.5]{\ar@{}[#2] \save ?(#1)*{=}     \restore}
\newcommand{\lthreecell  }[3][0.5]{\ar@{}[#2] \ar@3{->}?(#1)+/r 0.2cm/;?(#1)+/l 0.2cm/^{#3}}
\newcommand{\rthreecell  }[3][0.5]{\ar@{}[#2] \ar@3{->}?(#1)+/l 0.2cm/;?(#1)+/r 0.2cm/^{#3}}
\newcommand{\dthreecell  }[3][0.5]{\ar@{}[#2] \ar@3{->}?(#1)+/u 0.2cm/;?(#1)+/d 0.2cm/^{#3}}
\newcommand{\uthreecell  }[3][0.5]{\ar@{}[#2] \ar@3{->}?(#1)+/d 0.2cm/;?(#1)+/u 0.2cm/_{#3}}
\newcommand{\dlthreecell }[3][0.5]{\ar@{}[#2] \ar@3{->}?(#1)+/ur 0.2cm/;?(#1)+/dl 0.2cm/^{#3}}
\newcommand{\drthreecell }[3][0.5]{\ar@{}[#2] \ar@3{->}?(#1)+/ul 0.2cm/;?(#1)+/dr 0.2cm/^{#3}}
\newcommand{\ulthreecell }[3][0.5]{\ar@{}[#2] \ar@3{->}?(#1)+/dr 0.2cm/;?(#1)+/ul 0.2cm/^{#3}}
\newcommand{\urthreecell }[3][0.5]{\ar@{}[#2] \ar@3{->}?(#1)+/dl 0.2cm/;?(#1)+/ur 0.2cm/^{#3}}
\newcommand{\pair        }[4][10pt]{\ar@/^#1/[#2]^-{#3} \ar@/_#1/[#2]_-{#4}}
\newcommand{\dotpair     }[4][10pt]{\ar@{.>}@/^#1/[#2]^{#3} \ar@{.>}@/_#1/[#2]_{#4}}
\newcommand{\triple      }[5]{\ar@/^#5/[#1]^{#2} \ar@/_#5/[#1]_{#4} \ar@/12pt/[#1]|{#3}}
\newcommand{\ddtwocell   }[4][0.5]{
\ar@{}[#2] \ar@{=>}?(#1)+/u 0.5cm/;?(#1)+/u 0.1cm/_{#3}
\ar@{}[#2] \ar@{=>}?(#1)+/d 0.1cm/;?(#1)+/d 0.5cm/_{#4}}
\newcommand{\lltwocell   }[4][0.5]{
\ar@{}[#2] \ar@{=>}?(#1)+/r 0.5cm/;?(#1)+/r 0.1cm/_{#3}
\ar@{}[#2] \ar@{=>}?(#1)+/l 0.1cm/;?(#1)+/l 0.5cm/_{#4}}
\theoremstyle{plain}
\newtheorem{Thm}{Theorem}[subsection]
\newtheorem{Prop}[Thm]{Proposition}
\newtheorem{Cor}[Thm]{Corollary}
\theoremstyle{definition}
\newtheorem{Defn}[Thm]{Definition}
\newtheorem{Con}[Thm]{Construction}
\newtheorem{Ex}[Thm]{Example}
\newtheorem{Rk}[Thm]{Remark}
\begin{document}

\title{Fibred 2-categories and bicategories}
\author{Mitchell Buckley}
\address{Department of Computing, Macquarie University, NSW 2109, Australia}
\email{mitchell.buckley@mq.edu.au} 
\date{\today}

\begin{abstract}
We generalise the usual notion of fibred category; first to \emph{fibred 2-categories} and then to \emph{fibred bicategories}. Fibred 2-categories  correspond to 2-functors from a 2-category into $\two\Cat$. Fibred bicategories correspond to trihomomorphisms from a bicategory into $\bicat$. We describe the Grothendieck construction for each kind of fibration and present a few examples of each. 
Fibrations in our sense, between bicategories, are closed under composition and are stable under \emph{equiv-comma}. The free such fibration on a homomorphism is obtained by taking an \emph{oplax comma} along an identity.
\end{abstract}

\maketitle
\tableofcontents

\section{Introduction}

Fibred categories were first developed by Grothendieck \cite{groth1971, groth1995} to describe notions of descent in algebraic geometry. 
Some of this material was then extended by Gray \cite{gray1966} as tool for understanding \v{C}ech cohomology and by Giraud \cite{giraud1964, giraud1971} for non-abelian cohomology.
Later, Street described fibrations internal to any bicategory \cite{street1974, street1980} together with internal two-sided fibrations. 
Some more recent work on internal fibrations can be found in \cite{reihl2010}. 
Fibrations have found strong application in categorical logic: in describing comprehension schema for categories \cite{gray1969, lawvere1970} and via indexed categories \cite{pare1978}.
For an overview of applications to categorical logic and type theory see \cite{jacobs1999}.
Fibrations were also used by Benabou \cite{benabou1985} to describe some foundations of category theory.

Fibred 2-categories (also called 2-fibrations) were investigated by Hermida \cite{hermida1999} where the projection $\Fib \to \Cat$ was used as a canonical example. A definition of 2-fibration is given that very nearly (but not entirely) captures the full structure required for a Grothendieck construction. This definition was extended in a preprint by Bakovic \cite{bakovic2012} to strict homomorphisms of bicategories. In that paper he describes the action-on-objects of a Grothendieck construction sending trihomomorphisms $B^\coop \to \bicat$ to fibrations of bicategories. The paper also presents a large number of examples and from each strict homomorphism of bicategories he constructs a `canonical fibration' associated to that homomorphism. Fibrations of bicategories are characterised by a certain right biadjoint right inverse. The action-on-objects of a pseudo-inverse to the Grothendieck construction is also partially described.

Our goal is to establish precise definitions of 2-fibration and fibration of bicategories by describing a complete Grothendieck construction in each case. By `complete' we mean a construction that is provably a 3-equivalence or triequivalence. In the most general case fibrations of bicategories should be triequivalent to trihomomorphisms into $\bicat$. Among other things this means dealing with the fibers of non-strict homomorphisms and understanding the properties of cartesian 1- and 2-cells. Our second goal is to understand in what sense these fibrations are closed under `pullback' and composition. Third, we aim to describe the `free fibration' on a homomorphism of bicategories.

In doing this we find that the existing definitions of fibration need to be adjusted: cartesian 2-cells must be preserved by both pre- and post-composition with any 1-cell. Without this change it is not possible to construct a pseudo-inverse to this Grothendieck construction and fibrations of bicategories do not truly correspond to trihomomorphisms $\B^\coop \to \bicat$. In general, fibrations of bicategories can be non-strict homomorphisms $P\colon \E \to \B$ and we use the fact that fibrations are locally iso-lifting to show that any fibration is equivalent to a strict (and better behaved) fibration. We prove many of the usual results concerning cartesian 1-cells in this new context. The main result is a proof that the Grothendieck construction described partially in \cite{bakovic2012} can be extended to a full triequivalence once the necessary adjustments are made. We give the construction of the free fibration (Bakovic's canonical fibration) using the \emph{oplax comma} construction. We also show that fibrations are closed under composition and closed under \emph{equiv-comma}.

In section 1 we give an introduction and remind the reader of the basic theory of fibred categories. Subsection 1.1 includes the basic definitions of cartesian arrow, fibration, cleavage, et cetera. We then outline some standard properties of cartesian arrows and give a brief description of the original Grothendieck construction. 

In section 2 we outline the theory of fibred 2-categories. We say that a 2-category is fibred when it is the domain of a \emph{2-fibration}. In line with the usual theory, we require that these 2-functors have \emph{cartesian 1-cells} which are cartesian in the normal sense but also have a 2-dimensional lifting property. A 2-fibration $P$ also has \emph{cartesian 2-cells} which are cartesian as 1-cells for the action of $P$ on hom-sets; we ask also that cartesian 2-cells be closed under horizontal composition.
In subsection 2.1 we give definitions of cartesian 1-cell, cartesian 2-cell and 2-fibration. We prove 2-categorical versions of the standard results concerning cartesian 1-cells.  Subsection 2.2 gives the Grothendieck construction for 2-categories: this is an equivalence that sends 2-fibrations $P \colon E \to B$ to 2-functors from $B^\coop \to \two\Cat$. Subsection 2.3 contains some examples of 2-fibrations. 
Many of the results found in this section correspond well with the classical theory and become somewhat routine once the right foundations are established.

In section 3 we outline the theory of fibred bicategories. We say that a bicategory is fibred when it is the domain of a (bicategorical) \emph{fibration}. Fibrations of bicategories have the same structure as 2-fibrations except that cartesian 1-cells have a much weaker lifting property defined by \emph{bipullback} in $\Cat$. This significantly weakens the usual results concerning cartesian 1-cells: multiple invertible 2-cells are introduced into every calculation and many uniqueness properties are reduced to `unique up to isomorphism' or weaker. Despite these complications the usual results can be stated in a form that is consistent with this bicategorical setting. Subsection 3.1 covers these new definitions and results. The fact that fibrations are locally fibred means that they locally have the iso-lifting property; as a result many of the complications mentioned above can be simplified. In subsection 3.2 we show that every fibration is equivalent to one with a somewhat simpler structure. Subsection 3.3 describes the Grothendieck construction for fibrations between bicategories: a triequivalence that sends fibred bicategories to trihomomorphisms into $\bicat$! Some of the heavier calculations have been omitted as they are not immediately helpful to the reader.
Subsection 3.4 contains a number of examples.

In section 4 we investigate how fibrations between bicategories behave under composition, pullback and comma. Fibrations are closed under composition (4.1) and stable under (bi-)pullback (4.2). We define the oplax comma of two homomorphisms (4.3) and show that the free fibration on a homomorphism is given by taking the oplax comma with the identity.

\subsection{Standard notation}

Suppose $P\colon E\to B$ is a functor. A map $f\colon a \to b$ in $E$ is \emph{cartesian} when
\begin{equation*}
\cd[]{ E(z,a) \ar[r]^{f_*} \ar[d]_{P_{za}} & E(z,b) \ar[d]^{P_{zb}} \\ B(Pz,Pa) \ar[r]_{Pf_*} & B(Pz,Pb)}
\end{equation*}
is a pullback. This is the same as saying that for each $g \colon c \to b$ with $Pg = Pf.h$ there exists a unique $\hat h \colon c \to a$ with $P\hat h = h$ and $g = f.\hat h$.

A functor $P$ is a \emph{fibration} when for all $e\in E$ and $f\colon b \to Pe$ in $B$ there is a cartesian map $h\colon a \to e$ with $Ph=f$. In this case we say that $h$ is a cartesian lift of $f$. We say that $E$ is a \emph{fibred category} when it is the domain of a fibration.
A \emph{cleavage} for a fibration $P$ is a function $\c{\ }{\ }$ that describes a choice of cartesian lifts.
That is, $\c{f}{e}\colon f^*e \to e$ is the chosen cartesian lift of $f$ at $e$.
A fibration equipped with a cleavage is called a \emph{cloven fibration}.
Every fibration can be equipped with a cleavage using the axiom of choice so we implicitly regard all fibrations as cloven.
If a cloven fibration has $\c{g.f}{e} = \c{g}{e}.\c{f}{g^*e}$ and $\c{1_{Pe}}{1} = 1_e$ then we say the cleavage is \emph{split} and call it a \emph{split fibration}.

The following results are easy to verify:
Cartesian lifts of any $f$ are unique up to isomorphism in the slice over their common codomain.
If $f$ and $g$ are cartesian then so is $g.f$.
If $g.f$ and $g$ are cartesian then so is $f$.
If $f$ is cartesian and $Pf$ is an isomorphism, then $f$ is an isomorphism.

Suppose that
\begin{equation*}
 \cd[@C-4pt]{ E \ar[dr]_{P} \pair{rr}{F}{G} \dtwocell{rr}{\alpha} & & D \ar[dl]^{Q} \\ & B & }
\end{equation*}
where $P$ and $Q$ are fibrations.
We say that $F$ is \emph{cartesian} when it preserves cartesian maps and $P=QF$.
If $P$ and $Q$ are cloven fibrations and $F$ has the property that $F(\c{v}{e}) = \c{v}{Fe}$ we say that $F$ is \emph{split}.
A natural transformation $\alpha$ is called \emph{vertical} when $1_P = Q\alpha$ as pictured.

We use $\Fib(B)$ to denote the 2-category of fibrations over $B$, cartesian functors and vertical natural transformations. Let $\Cat$ be the 2-category of small categories. We use $\Hom(B^\op,\Cat)$ to denote the 2-category of pseudo-functors, pseudo-natural transformations and modifications. The \emph{Grothendieck construction} is a 2-functor $\el \colon \Hom(B^\op,\Cat) \to \Fib(B)$. It sends each pseudo-functor $F \colon B^\op \to \Cat$ to the obvious projection $\el F \to B$ from the \emph{category of elements}. The category of elements has objects pairs $(a,x)$ where $a \in B$ and $x \in Fa$; arrows are pairs $(f,u)\colon (a,x) \to (b,y)$ where $f \colon a \to b$ and $u \colon x \to Ff(y)$. The Grothendieck construction is an equivalence.

Suppose the following square is a pullback. If $P$ is a fibration then $P'$ is a fibration and $F'$ is a cartesian functor; $F'$ also reflects cartesian maps.
\begin{equation*}
 \cd[]{ 
 D \ar[r]^{F'} \ar[d]_{P'} & E \ar[d]^{P}\\
 C \ar[r]_{F} & B
}
\end{equation*}
Fibrations are also closed under composition.
There is a 2-monad on $\Cat/B$ whose category of algebras is $\Fib(B)$. The monad acts by taking the comma category of each functor $F \colon A \to B$ with $1_B$; the corresponding projection into $B$ is the \emph{free fibration on $F$}.

A functor $P\colon E \to B$ is a \emph{Street fibration} when for all $e\in E$ and $f\colon b \to Pe$ in $B$ there is a cartesian map $h\colon a \to e$ with $Ph\cong f$. This isomorphism is in the slice over $Pe$. Morphisms of Street fibrations $P$,$Q$ are pairs $(F,F')$ where $QF \cong F'P$ and $F$ preserves cartesian maps. This is only a slight variation on the ordinary notion of fibration but is useful for considering fibrations internal to a 2-category.

\section{Fibred 2-Categories} \label{sec:strict}

We present the basic data and properties of fibrations of 2-categories. All notions in this section are completely 2-categorical unless otherwise indicated.

\subsection{Definitions and properties of cartesian 1- and 2-cells}

We wish to define fibrations of 2-categories in a way that fits with usual definition of fibration of categories. It is thus necessary to describe cartesian arrows (and in this case cartesian 2-cells). Let $E$ and $B$ be 2-categories and let $P\colon E \to B$ be a 2-functor.

\begin{Defn} \label{def:cartesian_1-cell_strict}
We shall say a 1-cell $f\colon x \to y$ in $E$ is \emph{cartesian} when it has the following two properties.
\begin{enumerate}
\item For all $h\colon z\to y$ and $u\colon Pz \to Px$ with $Ph = Pf . u$, there is a unique $\hat{u}\colon z \to x$ with $P\hat{u}=u$ and $h = f . \hat{u}$;\label{cart_prop_one}
\begin{equation*}
\cd[@C+48pt]{
z \ar[dr]^{h} \ar@{.>}[d]_{\hat{u}} & & Pz \ar[dr]^{Ph} \ar[d]_{u} & \\
x \ar[r]_{f} & y & Px \ar[r]_{Pf} & Py
}.
\end{equation*}
We call $\hat{u}$ the \emph{lift} of $u$.
\item For all $\sigma\colon h \To k$, $\tau\colon u \To v$ with $P\sigma = Pf . \tau$ and lifts $\hat{u}$,$\hat{v}$ of $u$,$v$, there is a unique $\hat{\tau}\colon \hat{u} \To \hat{v}$ with $P\hat{\tau}=\tau$ and $\sigma = f . \hat{\tau}$.
\begin{equation*}
\cd[@C+48pt@R+12pt]{
z \pair[8pt]{dr}{h}{k} \dltwocell{dr}{\sigma} \pair[8pt]{d}{\hat{u}}{\hat{v}} \dotltwocell{d}{\hat{\tau}} & & Pz \pair[8pt]{dr}{Ph}{Pk} \dltwocell[0.5]{dr}{P\sigma} \pair[8pt]{d}{u}{v} \ltwocell{d}{\tau} & \\
x \ar[r]_{f} & y & Px \ar[r]_{Pf} & Py
}
\end{equation*}
We call $\hat{\tau}$ the \emph{lift} of $\tau$.
\end{enumerate}
\end{Defn}

It is not hard to prove that:
\begin{Prop}
A 1-cell $f\colon x \to y$ in $E$ is cartesian if and only if
\begin{equation*}
\cd[]{
E(z,x) \ar[r]^{f_*} \ar[d]_{P_{zx}} & E(z,y) \ar[d]^{P_{zy}}\\
B(Pz,Px) \ar[r]_{Pf_*}& B(Pz,Py)
}
\end{equation*}
is a pullback in $\Cat$.
\end{Prop}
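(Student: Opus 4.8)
The proposition is an "unpacking" statement: it says the two-part Definition~\ref{def:cartesian_1-cell_strict} is exactly the assertion that a certain commuting square of hom-categories is a pullback in $\Cat$. So the proof is a matter of recalling what a pullback in $\Cat$ is, concretely, and checking that its universal property splits into precisely conditions~\eqref{cart_prop_one} and~(2). The plan is to first write down the pullback $\C$ of $P_{zy}\colon E(z,y)\to B(Pz,Py)$ along $Pf_*\colon B(Pz,Px)\to B(Pz,Py)$ explicitly: its objects are pairs $(h,u)$ with $h\colon z\to y$ in $E$, $u\colon Pz\to Px$ in $B$, and $Ph = Pf\cdot u$; its morphisms $(h,u)\to(k,v)$ are pairs $(\sigma,\tau)$ with $\sigma\colon h\To k$ in $E$, $\tau\colon u\To v$ in $B$, and $P\sigma = Pf\cdot\tau$. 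Then the canonical comparison functor $\Phi\colon E(z,x)\to\C$ sends $\hat u$ to $(f\cdot\hat u, P\hat u)$ and $\hat\tau$ to $(f\cdot\hat\tau, P\hat\tau)$. The square is a pullback precisely when $\Phi$ is an isomorphism of categories for every $z$.

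**Key steps.** First, I would observe that $\Phi$ being bijective on objects is literally condition~\eqref{cart_prop_one}: given an object $(h,u)$ of $\C$, existence of a unique $\hat u\colon z\to x$ with $P\hat u = u$ and $f\cdot\hat u = h$ is exactly the statement that there is a unique object of $E(z,x)$ mapping to $(h,u)$. Second, I would show $\Phi$ being fully faithful is condition~(2). Fix objects $\hat u,\hat v$ of $E(z,x)$, with images $(h,u) = \Phi\hat u$ and $(k,v) = \Phi\hat v$; a morphism $(h,u)\to(k,v)$ in $\C$ is a pair $(\sigma,\tau)$ with $\sigma\colon h\To k$, $\tau\colon u\To v$, $P\sigma = Pf\cdot\tau$, and the requirement that a unique $\hat\tau\colon\hat u\To\hat v$ exists with $P\hat\tau = \tau$ and $f\cdot\hat\tau = \sigma$ is exactly full faithfulness of $\Phi$ at $(\hat u,\hat v)$. (Note that here $\hat u,\hat v$ play the role of the chosen lifts in condition~(2), and every such pair arises as lifts by part~\eqref{cart_prop_one}, so quantifying over lifts and over all of $E(z,x)$ coincide.) Third, for the converse direction I would just run these equivalences backwards: if the square is a pullback for all $z$, then $\Phi$ is iso on objects and fully faithful, yielding exactly the two clauses of the definition. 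Functoriality of $\Phi$ in $z$ (i.e.\ naturality of the comparison) is automatic and need not be belabored.

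**The main obstacle.** There is no deep obstacle; the one point requiring a little care is the interplay between the two clauses, specifically that condition~(2) is phrased relative to \emph{given} lifts $\hat u,\hat v$, whereas full faithfulness of $\Phi$ quantifies over objects of $E(z,x)$. The reconciliation is that by clause~\eqref{cart_prop_one} every object of $E(z,x)$ is the unique lift of its image, so "all objects of $E(z,x)$" and "all lifts $\hat u$ of some $u$" describe the same collection; thus clause~(2), assuming clause~\eqref{cart_prop_one}, is genuinely equivalent to full faithfulness. One should also double-check the direction of the equation $P\sigma = Pf\cdot\tau$ versus $Ph = Pf\cdot u$ matches the diagrams in the definition (whiskering $Pf$ on which side), but this is bookkeeping. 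Since the proposition is flagged as "not hard to prove," I would keep the written proof to a short paragraph making these identifications, and would not reproduce the pullback diagram in detail.
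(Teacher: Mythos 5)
Your proof is correct. The paper omits the proof of this proposition entirely (it is introduced only with ``It is not hard to prove that''), and your argument --- describing the strict pullback of $P_{zy}$ along $Pf_*$ explicitly, matching bijectivity on objects of the comparison functor with clause (1) and full faithfulness with clause (2), together with the observation that clause (1) makes ``all lifts of some $u$'' coincide with ``all objects of $E(z,x)$'' --- is exactly the intended unpacking.
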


\begin{Defn}
A 2-cell $\alpha\colon f \To g\colon x \to y$ in $E$ is \emph{cartesian} if it is cartesian as a 1-cell for the functor $P_{xy}\colon E(x,y) \to B(Px,Py)$.
\end{Defn}

We take the time here to establish a few basic properties of cartesian maps.

\begin{Prop} Suppose $P\colon E\to B$ is a 2-functor.
\begin{enumerate}
\item If $f\colon x \to y$ in $E$ is cartesian for $P$, then it is cartesian for the underlying functor $P_0\colon E_0 \to B_0$.
\item If $f\colon x \to y$ and $f'\colon z \to y$ are cartesian and $Pf=Pf'$ then there exists a unique isomorphism $h\colon z \to x$ with $f' = h.f$ and $Ph=1_{Pa}$.
\item If $f\colon x \to y$ is cartesian and $Pf$ is an isomorphism then $f$ is an isomorphism.
\item Suppose $f\colon x \to y$ and $g\colon y \to z$ in $E$. If $f$ and $g$ are cartesian then $g.f$ is cartesian.
\item Suppose $f\colon x \to y$ and $g\colon y \to z$ in $E$. If $g$ and $g.f$ are cartesian then $f$ is cartesian.
\end{enumerate}
\end{Prop}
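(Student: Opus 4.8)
The plan is to deduce all five parts from two ingredients: the Proposition characterising cartesian $1$-cells by a pullback of hom-categories (that $f\colon x\to y$ is cartesian exactly when, for every object $z$, the square with horizontal maps $f_*$, $(Pf)_*$ and vertical maps $P_{zx}$, $P_{zy}$ is a pullback in $\Cat$), together with the standard facts about cartesian maps recalled in \S1.1. The essential point is that this characterisation already absorbs \emph{both} clauses of Definition~\ref{def:cartesian_1-cell_strict}, so once we argue with these hom-squares we never again have to treat $1$-cell lifts and $2$-cell lifts separately.

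Part~(1) is immediate, since clause~(\ref{cart_prop_one}) of Definition~\ref{def:cartesian_1-cell_strict} is verbatim the assertion that $f$ is cartesian for the underlying functor $P_0\colon E_0\to B_0$; equivalently, apply the object-set functor $\ob\colon\Cat\to\Set$, which preserves pullbacks, to the hom-squares. Granting~(1), parts~(2) and~(3) follow from the results of \S1.1 applied to $P_0$: any two cartesian $1$-cells $f\colon x\to y$, $f'\colon z\to y$ with $P_0f=P_0f'$ are uniquely isomorphic over $y$ by an isomorphism lying over the identity --- this is~(2), and such an isomorphism, being a genuinely invertible $1$-cell of $E$, is an isomorphism in the $2$-categorical sense --- while a cartesian $f$ whose image $P_0f$ is invertible is itself invertible, which is~(3). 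Alternatively~(3) can be read off the hom-squares directly: if $Pf$ is invertible then each $(Pf)_*$ is an isomorphism of categories, hence so is each $f_*$, being the base change of an isomorphism along $P_{zy}$; taking $z=y$ and then $z=x$ produces a two-sided inverse for $f$.

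Parts~(4) and~(5) are where the hom-square picture does the work. Fix $f\colon x\to y$, $g\colon y\to z$ and an object $w$. Functoriality of composition gives $(g.f)_*=g_*.f_*$ and $\bigl(P(g.f)\bigr)_*=(Pg)_*.(Pf)_*$, so the hom-square attached to $g.f$ at $w$ is precisely the horizontal pasting of the one attached to $f$ at $w$ (on the left) with the one attached to $g$ at $w$ (on the right). If $f$ and $g$ are cartesian, both small squares are pullbacks, hence so is the pasted rectangle, which gives~(4). If instead $g$ and $g.f$ are cartesian, then the right-hand square and the outer rectangle are pullbacks, so by the cancellation direction of the pullback pasting lemma the left-hand square is a pullback, which gives~(5). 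Both conclusions hold uniformly in $w$, so they yield the pullback characterisation for $g.f$, respectively for $f$.

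I do not anticipate a real obstacle here. The content is the single observation that the pullback characterisation converts the $2$-dimensional clause of the definition into ordinary $1$-categorical bookkeeping, after which everything reduces to the elementary facts that object-set functors preserve pullbacks, that base change preserves isomorphisms, and the two directions of the pullback pasting lemma, together with the classical statements already quoted. The only points requiring a line of care are, in~(2), checking that the comparison isomorphism lies over the identity and is the unique $1$-cell with that property (both forced by uniqueness in clause~(\ref{cart_prop_one})), and, in~(4) and~(5), keeping the convention $g.f=g\circ f$ straight while pasting.
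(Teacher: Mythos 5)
Your proof is correct and follows essentially the same route as the paper: part (1) is read off from the first clause of the definition, (2) and (3) are deduced from (1) via the classical facts for the underlying functor, and (4) and (5) come from pasting and cancelling the pullback hom-squares exactly as in the paper's argument. The extra alternative you give for (3) via base change of isomorphisms is a harmless bonus.
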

\begin{proof}
(1) is true because cartesian 1-cells have the ordinary lifting property for 1-cells. (2) and (3) are a consequence of (1). For (4), notice that since $f$ and $g$ are cartesian, the two commuting squares below are pullbacks. Hence, the outer rectangle is a pullback and $g.f$ is cartesian.
\begin{equation*}
\cd[]{
E(w,x) \ar[r]^{f_*} \ar[d]_{P_{wx}}  & E(w,y) \ar[d]^{P_{wy}} \ar[r]^{g_*} & E(w,z) \ar[d]^{P_{wz}}\\
B(Pw,Px) \ar[r]_{Pf_*}  & B(Pw,Py) \ar[r]_{Pg_*}& B(Pw,Pz)
}
\end{equation*}
For (5), use the same diagram as above. Since $g$ and $g.f$ are cartesian, the right square and outer rectangle are pullbacks. Hence, the left square is a pullback and $f$ is cartesian.
\end{proof}

\begin{Prop}
Suppose $P\colon E\to B$ is a 2-functor and that $h\colon y \to z$, $\alpha \colon f \To g \colon x \to y$ in $E$. If $h$ and $h\alpha$ are cartesian then $\alpha$ is cartesian.
\end{Prop}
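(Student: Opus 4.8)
The goal is to prove that $\alpha$ is cartesian as a 1-cell for the functor $P_{xy}\colon E(x,y)\to B(Px,Py)$, since by definition that is exactly what it means for the 2-cell $\alpha$ to be cartesian; so the whole argument takes place inside hom-categories.

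Because $h\colon y\to z$ is cartesian, the Proposition characterising cartesian 1-cells shows that the square
\begin{equation*}
\cd[]{
E(x,y) \ar[r]^{h_*} \ar[d]_{P_{xy}} & E(x,z) \ar[d]^{P_{xz}} \\
B(Px,Py) \ar[r]_{Ph_*} & B(Px,Pz)
}
\end{equation*}
is a pullback in $\Cat$. Hence $E(x,y)$ is identified with $B(Px,Py)\times_{B(Px,Pz)}E(x,z)$ in such a way that $P_{xy}$ and $h_*$ become the two projections, and the 2-cell $\alpha$ corresponds to the morphism given by the pair $(P\alpha,h\alpha)$.

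Now I invoke the elementary fact that in any pullback square of categories the functor $h_*$ \emph{reflects} cartesian morphisms, i.e.\ a morphism of $E(x,y)$ is cartesian for $P_{xy}$ as soon as its $h_*$-image is cartesian for $P_{xz}$. To see this directly (and to fix notation), take test data $\beta\colon k\To g$ in $E(x,y)$ and $\tau\colon Pk\To Pf$ in $B(Px,Py)$ with $P\beta=P\alpha\circ\tau$; functoriality of $P$ and the interchange law give $P(h\beta)=P(h\alpha)\circ(Ph.\tau)$, and since $h\alpha$ is cartesian for $P_{xz}$ there is a unique $\widehat{Ph.\tau}\colon h.k\To h.f$ with $P(\widehat{Ph.\tau})=Ph.\tau$ and $h\beta=h\alpha\circ\widehat{Ph.\tau}$. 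The pair $(\tau,\widehat{Ph.\tau})$ agrees over $B(Px,Pz)$, so the pullback property yields a unique $\hat\tau\colon k\To f$ in $E(x,y)$ with $P\hat\tau=\tau$ and $h\hat\tau=\widehat{Ph.\tau}$; then $\alpha\circ\hat\tau$ and $\beta$ have the same image under both projections, hence $\alpha\circ\hat\tau=\beta$, and uniqueness of $\hat\tau$ follows from uniqueness of $\widehat{Ph.\tau}$. Applying this with $h_*(\alpha)=h\alpha$, which is cartesian for $P_{xz}$ by hypothesis, we conclude that $\alpha$ is cartesian for $P_{xy}$, that is, $\alpha$ is a cartesian 2-cell.

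The one point worth care is the reflection-of-cartesian-morphisms fact itself: the remark on pullbacks in the standard-notation subsection records it only when the right-hand leg is a fibration, whereas the argument above uses no such hypothesis, so I would either re-derive it as above or simply note that the fibration assumption is inessential there. Beyond that, the proof is routine manipulation of whiskers, the interchange law, and the universal property of a pullback of categories, and I do not expect a genuine obstacle---only careful bookkeeping.
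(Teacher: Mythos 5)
Your proof is correct and follows essentially the same route as the paper: identify the square on $h_*$ as a pullback (since $h$ is cartesian) and invoke the fact that the top leg of a pullback of categories reflects cartesian morphisms. The paper simply cites this reflection property, whereas you verify it directly and rightly observe that the fibration hypothesis appearing in the introductory remark is not needed for it; this is a harmless elaboration, not a different argument.
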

\begin{proof}
Since $h$ is cartesian, the following square is a pullback.
\begin{equation*}
\cd[]{
E(x,y) \ar[r]^{h_*} \ar[d]_{P_{zx}} & E(x,z) \ar[d]^{P_{zy}}\\
B(Px,Py) \ar[r]_{Ph_*}& B(Px,Pz)
}
\end{equation*}
It is a property of pullbacks that $h_*$ reflects cartesian maps. Now since $h\alpha$ is cartesian, $\alpha$ is cartesian.
\end{proof}


\begin{Defn}
A 2-functor $P\colon E \to B$ is a \emph{2-fibration} if
\begin{enumerate}
\item for any $e \in E$ and $f\colon b \to Pe$, there is a cartesian 1-cell $h\colon a \to e$ with $Ph = f$;
\item for any $g \in E$ and $\alpha\colon f \To Pg$, there is a cartesian 2-cell $\sigma\colon f \To g$ with $P\sigma = \alpha$; and
\item the horizontal composite of any two cartesian 2-cells is cartesian.
\end{enumerate}
We say that $E$ is a \emph{fibred 2-category} when it is the domain of a 2-fibration $P\colon E \to B$.
\end{Defn}

\begin{Rk}
The second condition could equivalently be stated as ``$P_{xy}\colon E(x,y) \to B(Px,Py)$ is a fibration for all $x$, $y$ in $E$''. In this case we say that $P$ is \emph{locally fibred}.
\end{Rk}

\begin{Rk}
The third condition could equivalently be stated as ``cartesian 2-cells are closed under pre-composition and post-composition with arbitrary 1-cells''. This is a consequence of the middle-four interchange and the fact that cartesian 2-cells are closed under vertical composition.
\end{Rk}

\begin{Rk}
The first definition of 2-fibration was given by Hermida in \cite{hermida1999}. His local characterisation of 2-fibrations (Theorem 2.8) is identical to our definition except that it only requires cartesian 2-cells to be closed under pre-composition with any 1-cell. We insist that cartesian 2-cells also be closed under \emph{post}-composition with any 1-cell. The two definitions are not equivalent. This is illustrated by the following example.

Let $\Fib$ be the 2-category whose objects are fibrations $P$, $Q$ in $\Cat$; 1-cells are pairs of functors $(F,G)$ such that $QF = GP$ and $F$ is cartesian; and 2-cells are pairs of natural transformations $(\alpha,\beta)$ such that $Q\alpha = \beta P$. 
\begin{equation*}
\cd[@R+12pt@C+24pt]{
 E \ar[d]_P \pair{r}{F}{F'} \dtwocell{r}{\alpha} & D \ar[d]^Q \\
 B \pair{r}{G}{G'} \dtwocell{r}{\beta } & C
}
\end{equation*}
There is an obvious projection $\cod\colon\Fib\to\Cat$ that sends $P$ to $B$, $(F,G)$ to $G$ and $(\alpha,\beta)$ to $\beta$. 

This is the 2-functor Hermida takes as a prototype for 2-fibrations. Its cartesian 1-cells are pullback squares and its cartesian 2-cells are pairs of natural transformations whose first component is point-wise cartesian. Pre-composition with 1-cells obviously preserves cartesian 2-cells because it amounts to re-indexing the natural transformations. Post-composition with 1-cells also preserves cartesian 2-cells because the 1-cells in $\Fib$ are cartesian in their first component.

Suppose we modify $\Fib$ by not requiring 1-cells to be cartesian in their first component.
In this case $\cod$ is a Hermida-style 2-fibration and not a 2-fibration by our definition. Thus the definitions are not equivalent.
We will show later that the post-composition property is required to give a correspondence with 2-functors $B^\coop \to \two\Cat$.
\end{Rk}

\begin{Defn}
A \emph{cleavage} for a 2-fibration $P$ is a function $\c{-}{-}$ that describes a \emph{choice} of cartesian lifts.
The 1-cell $\c{f}{e}$ is the chosen cartesian lift of $f\colon b\to Pe$ at $e$.
The same notation is used for chosen cartesian lifts of 2-cells.
A 2-fibration with a cleavage is called a \emph{cloven} 2-fibration.
A \emph{split} 2-fibration is a cloven 2-fibration with the property that
\begin{equation} \label{split_composition}
\begin{aligned}
\c{fg}{e} & = \c{f}{e}.\c{g}{f^*e} \\
\c{\alpha\beta}{k} & = \c{\alpha}{k}.\c{\beta}{\alpha^*k} \\
\c{\alpha*\gamma}{kj} & = \c{\alpha}{k}*\c{\gamma}{j}
\end{aligned}
\end{equation}
and
\begin{equation} \label{split_identities}
\begin{aligned}
\c{1_{Pe}}{e} & =1_{e}\\
\c{1_{Pk}}{k} & =1_{k}.
\end{aligned}
\end{equation}
These conditions specify that cartesian maps be closed under all forms of composition and that cartesian lifts of identities are identities. We say that $P$ is \emph{locally split} when each $P_{xy}$ is split (the second conditions in (1) and (2) above). We say that $P$ is \emph{horizontally split} when chosen cartesian 2-cells are closed under horizontal composition (the third condition in (1) above).
\end{Defn}

\begin{Rk}
Every 2-fibration can be equipped with a cleavage using the axiom of choice.
Since cartesian lifts are unique up to isomorphism the choice of cleavage for a 2-fibration does not significantly affect its behaviour.
As a result we usually suppose that every 2-fibration is cloven and rarely distinguish between one cleavage and another.
\end{Rk}

\begin{Rk}
The three conditions in (\ref{split_composition}) insist that chosen cartesian lifts are closed under composition of 1-cells, composition of 2-cells and horizontal composition of 2-cells. There is some subtlety in the third condition. We could equally well ask that chosen cartesian 2-cells be closed under pre- and post-composition with arbitrary 1-cells:
\begin{equation}
\begin{aligned}
\c{\alpha}{k}j & =\c{\alpha Pj}{kj} \\
k\c{\gamma}{j} & =\c{Pk\gamma}{kj}
\end{aligned}
\end{equation}
for all $j$ and $k$.
The equivalence of these two statements relies on the first two conditions in (1) and the middle-four interchange.
\end{Rk}

\begin{Defn}
Suppose $P\colon E \to B$ and $Q \colon D\to B$ are 2-fibrations. A 2-functor $\eta\colon E \to D$ between fibred 2-categories is \emph{cartesian} when it preserves all cartesian maps and has $Q\eta = P$.
A cartesian 2-functor $\eta\colon E \to D$ is \emph{split} when it preserves choice of cartesian maps:
\begin{equation}
\begin{aligned}
\eta(\c{f}{e}) & = \c{f}{\eta(e)}\\
\eta(\c{\alpha}{k}) & = \c{\alpha}{\eta(k)}.
\end{aligned}
\end{equation}
A 2-natural transformation $\alpha \colon \eta \To \tau$ is \emph{vertical} when $Q\alpha = 1_{P}$.
A modification $\Gamma \colon \alpha \Rrightarrow \beta$ is \emph{vertical} when $Q\Gamma = 1_{1_{P}}$.
\end{Defn}

Suppose that $B$ is a 2-category.
Let $\Fib_s(B)$ be the 3-category of split 2-fibrations over $B$, split cartesian 2-functors, vertical 2-natural transformations and vertical modifications.
Let $\two\Cat$ be the 3-category of 2-categories, 2-functors, 2-natural transformations and modifications.
Let $[B^\coop,\two\Cat]$ be the 3-category of contravariant 2-functors from $B$ to $\two\Cat$, 2-natural transformations, modifications and perturbations.

The following two results will prove useful.

\begin{Prop} \label{prop:chosen_2-cells_enough_strict}
Suppose that $P\colon E\to B$ is a 2-functor and that $\alpha$ and $\beta$ are cartesian. If $\beta*\alpha$ is cartesian then all cartesian 2-cells over $P\alpha$ and $P\beta$ are closed under horizontal composition.
$$\cd[@C+24pt]{ a \dtwocell{r}{\alpha} \pair{r}{f}{h} & b \pair{r}{g}{k} \dtwocell{r}{\beta} & c} $$
\end{Prop}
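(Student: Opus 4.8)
The plan is to reduce the claim to two standard facts about cartesian maps of ordinary functors, applied to the hom-functors $P_{xy}$, together with the middle-four interchange law. Recall that a cartesian 2-cell of $P$ is nothing but a cartesian 1-cell for the relevant $P_{xy}\colon E(x,y)\to B(Px,Py)$, so the fact that cartesian lifts are unique up to a vertical isomorphism and the fact that cartesian maps are closed under composition (in particular that composing a cartesian map with an isomorphism again yields a cartesian map, since isomorphisms are always cartesian) are available one dimension down. The only genuinely 2-dimensional ingredient is interchange, which converts a horizontal composite of vertical composites into a vertical composite of horizontal composites.

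Concretely, keep the notation of the statement: $\alpha\colon f\To h\colon a\to b$ and $\beta\colon g\To k\colon b\to c$, with $\alpha$, $\beta$ and $\beta*\alpha$ all cartesian. Let $\alpha'\colon f'\To h$ be a cartesian 2-cell with $P\alpha'=P\alpha$ and $\beta'\colon g'\To k$ a cartesian 2-cell with $P\beta'=P\beta$; note that, being cartesian lifts of $P\alpha$ and $P\beta$, these share the codomains $h$ and $k$ of $\alpha$ and $\beta$. First I would apply to the functor $P_{ab}$ the fact that two cartesian maps with a common codomain and equal image are related by a vertical isomorphism: this produces an invertible 2-cell $\theta\colon f'\To f$ with $P\theta=1_{Pf}$ and $\alpha\cdot\theta=\alpha'$ (vertical composition). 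Symmetrically, applying the same fact to $P_{bc}$ yields an invertible $\phi\colon g'\To g$ with $P\phi=1_{Pg}$ and $\beta\cdot\phi=\beta'$.

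Next I would use interchange to compute
\[
\beta'*\alpha' \;=\; (\beta\cdot\phi)*(\alpha\cdot\theta) \;=\; (\beta*\alpha)\cdot(\phi*\theta).
\]
Here $\phi*\theta\colon g'f'\To gf$ is invertible, being a horizontal composite of invertible 2-cells, and hence is a cartesian map for $P_{ac}$; moreover $P(\phi*\theta)=P\phi*P\theta$ is an identity 2-cell by interchange in $B$. Since $\beta*\alpha$ is cartesian by hypothesis, closure of cartesian maps under composition, applied to $P_{ac}$, shows that the vertical composite $(\beta*\alpha)\cdot(\phi*\theta)$ is cartesian. As this composite is precisely $\beta'*\alpha'$, we are done; in fact the computation shows a little more, namely that $\beta'*\alpha'$ is again a cartesian lift of the base 2-cell $P(\beta*\alpha)$.

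I do not expect any real obstacle here: the argument is formal once the right bookkeeping is in place. The point most easily overlooked is that one should record, before invoking uniqueness of cartesian lifts, that the competing cartesian 2-cells genuinely have the same codomains $h$, $k$ as $\alpha$, $\beta$; and one should check that the comparison 2-cell $\phi*\theta$ is sent by $P$ to an identity, so that passing from $(\alpha,\beta)$ to $(\alpha',\beta')$ is invisible over $B$. Both are immediate, the latter directly from the interchange axiom in $B$.
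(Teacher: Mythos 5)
Your proof is correct and follows essentially the same route as the paper: factor the given cartesian lifts through $\alpha$ and $\beta$ via the unique vertical isomorphisms, apply middle-four interchange to get $(\beta*\alpha)\cdot(\phi*\theta)$, and conclude since a cartesian map composed with an isomorphism is cartesian. Your explicit remarks about matching codomains and about $P(\phi*\theta)$ being an identity are sensible bookkeeping that the paper leaves implicit.
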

\begin{proof}
Suppose that $\alpha$, $\beta$ and $\beta*\alpha$ are cartesian and $\gamma$ and $\delta$ are cartesian lifts of $P\alpha$ and $P\beta$.
There exist unique isomorphisms $\eta$ and $\tau$ such that $\gamma = \alpha.\eta$ and $\delta = \beta.\tau$.
Then $\delta*\gamma = (\beta.\tau)*(\alpha.\eta) = (\beta*\alpha).(\tau*\eta)$.
Since $\beta*\alpha$ is cartesian and $\tau*\eta$ is an isomorphism $\delta*\gamma$ is cartesian.
\end{proof}

\begin{Prop} \label{prop:preserve_chosen_maps_enough_strict}
Suppose that $P$ and $Q$ are 2-fibrations and the following diagram commutes.
\begin{equation*}
\cd[]{
 E \ar[dr]_P \ar[rr]^F & & D \ar[dl]^Q \\
 & B &
}
\end{equation*}
If $F$ preserves chosen cartesian maps then it preserves all cartesian maps. 
\end{Prop}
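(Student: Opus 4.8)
The plan is to exploit the fact that chosen cartesian maps are, in particular, cartesian, together with the uniqueness-up-to-isomorphism of cartesian lifts established in the earlier proposition. Let $f\colon x\to y$ be an arbitrary cartesian 1-cell in $E$. Since $Q$ is a 2-fibration it has a cleavage, so there is a chosen cartesian lift $\c{Pf}{Fy}\colon (Pf)^*(Fy)\to Fy$ of $Pf$ at $Fy$. By hypothesis $F$ preserves chosen cartesian maps, so $F(\c{Pf}{Fx})$ — wait, more carefully: applying $F$ to the chosen cartesian lift $\c{Pf}{Fy}$ is not quite what we have; instead note that $\dom\c{Pf}{Fy}$ lives over $P x = P(\dom f)$, and by the universal property of the chosen cartesian 1-cell in $D$ there is a vertical isomorphism comparing $Ff$ with $\c{Pf}{Fy}$. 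The key input is that $F$ preserves the \emph{chosen} cartesian 1-cell at the appropriate object, hence $F$ applied to that chosen cartesian lift is again a chosen (hence genuine) cartesian 1-cell.

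First I would reduce to the 1-dimensional statement: by Definition of cartesian 2-cell, a cartesian 2-cell for $P$ is exactly a cartesian 1-cell for some hom-functor $P_{xy}$, and $F_{xy}\colon E(x,y)\to D(Fx,Fy)$ is a map of fibrations commuting with $P_{xy}$ and $Q_{xy}$ that preserves chosen cartesian maps; so if the 1-dimensional statement is proved for arbitrary 2-fibrations it applies verbatim to the $P_{xy}$ and handles cartesian 2-cells. Hence it suffices to show: if $F$ preserves chosen cartesian 1-cells then it preserves all cartesian 1-cells. Second, given an arbitrary cartesian $f\colon x\to y$ over $u:=Pf$, form the chosen cartesian lift $\c{u}{y}\colon u^*y\to y$ in $E$. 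Since both $f$ and $\c{u}{y}$ are cartesian over the same arrow $u$ with codomain $y$, part (2) of the earlier Proposition gives a unique vertical isomorphism $k\colon x\to u^*y$ with $\c{u}{y}.k = f$ and $Pk = 1$. Third, apply $F$: $Ff = F(\c{u}{y}).Fk$, where $F(\c{u}{y}) = \c{u}{Fy}$ is a chosen — therefore genuine — cartesian 1-cell in $D$, and $Fk$ is an isomorphism with $Q(Fk) = P k = 1$, hence in particular $Q(Fk)$ is an isomorphism. An isomorphism is trivially cartesian, and the composite of cartesian 1-cells is cartesian (part (4)), so $Ff = \c{u}{Fy}.Fk$ is cartesian.

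The one point needing a little care — and the likely main obstacle — is making sure the domains line up so that $F(\c{u}{y})$ really is the \emph{chosen} cartesian lift $\c{u}{Fy}$ rather than merely some cartesian lift: this uses precisely the splitness-type hypothesis that $F$ preserves \emph{chosen} cartesian maps, i.e. $F(\c{v}{e}) = \c{v}{Fe}$, applied with $v = u$ and $e = y$, noting $Q(Fy) = P y$ so that $\c{u}{Fy}$ is defined. Everything else is a direct assembly of results already established: uniqueness of cartesian lifts up to vertical iso, closure of cartesian 1-cells under composition, and the fact that isomorphisms are cartesian. No nontrivial calculation is required beyond observing that a functor sends isomorphisms to isomorphisms, so the proof is short.
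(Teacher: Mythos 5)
Your proof is correct and takes essentially the same route as the paper: factor an arbitrary cartesian map as (chosen cartesian lift)$\circ$(vertical isomorphism), apply $F$, and use that $F$ sends the chosen lift to a chosen (hence cartesian) lift and the isomorphism to an isomorphism, so the composite is cartesian. The paper writes this out for 2-cells and notes the 1-cell case is identical, whereas you do the 1-cell case and reduce the 2-cell case to it via the hom-functors $P_{xy}$; this is only a cosmetic difference.
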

\begin{proof}
Suppose that $\alpha \colon f \To h$ is cartesian in $D$. It factors as $\alpha = \c{P\alpha}{h}.\sigma$ where $\sigma$ is an isomorphism. Then $F\alpha = F\c{P\alpha}{h}.F\sigma$ and since $F$ preserves chosen cartesian maps and isomorphisms $F\alpha$ is cartesian.
The reasoning for cartesian 1-cells is exactly the same.
\end{proof}

\subsection{The Grothendieck construction}

We will describe the Grothendieck construction for split fibred 2-categories: an equivalence
$$\el \colon [B^\coop,\two\Cat] \to \Fib_s(B).$$
We are mostly concerned with its action on objects and use ``Grothendieck construction'' to mean both the action on objects and the whole 3-functor.
This generalises the Grothendieck construction for ordinary fibrations. 
We found that the Grothendieck construction for general 2-fibrations could not be described so neatly as for split 2-fibrations; this is discussed in Remark \ref{strict_non-split_2-fib}.
In section \ref{sec:weak} we will give a more general result: the Grothendieck construction for fibred bicategories.

\begin{Con}[The Grothendieck construction for 2-categories] 
\label{twoGrothCon}
Suppose that $F\colon B^\coop \to \two\Cat$. Let $\el F$ be the 2-category:
\begin{itemize}
\item 0-cells are pairs $(x,\e{x})$ where $x\in B$ and $\e{x}\in Fx$.

\item 1-cells are pairs $(f,\e{f})\colon (x,\e{x}) \to (y,\e{y})$ where
$f\colon x \to y$ and $\e{f}\colon \e{x} \to Ff(\e{y})$.


\item 2-cells are pairs $(\alpha,\e{\alpha})\colon (f,\e{f}) \To (g,\e{g})\colon (x,\e{x}) \to (y,\e{y})$ where
$\alpha\colon f \To g$ and
$$
\cd[@C+24pt]{
\e{x} \ar[r]^-{\e{f}} \ar[dr]_-{\e{g}} & Ff(\e{y}) \\
\drtwocell[0.6]{ur}{\e{\alpha}} & Fg(\e{y}) \ar[u]_{F\alpha_{\e{y}}}}~.$$

\item If $(\alpha,\e{\alpha})$ as above and $(\gamma,\e{\gamma})\colon (g,\e{g}) \To (h,\e{h})\colon (x,\e{x}) \to (y,\e{y})$ then the composite $ (\gamma,\e{\gamma}).(\alpha,\e{\alpha}) $ has first component $\gamma.\alpha$ and second component
\begin{equation*}
\cd[@C+24pt]{
 \e{x} \ar[r]^-{\e{f}} \ar[dr]_-{\e{g}} \ar@/_10pt/[ddr]_-{\e{h}} & Ff(\e{y}) \\
\drtwocell[0.6]{ur}{\e{\alpha}} & Fg(\e{y}) \ar[u]_{F\alpha_\e{y}} \\
 & Fh(\e{y}) \ar[u]_{F\gamma_\e{y}} \drtwocell[0.4]{uul}{\e{\gamma}}
}.
\end{equation*}

\item If $(\alpha,\e{\alpha})$ as above and $(\beta,\e{\beta})\colon (j,\e{j}) \To (k,\e{k})\colon (y,\e{y}) \to (z,\e{z})$ then the composite $ (\beta,\e{\beta})*(\alpha,\e{\alpha})$ has first component $\beta*\alpha$ and second component
\begin{equation*}
\cd[@C+24pt]{
\e{x} \ar[r]^{\e{f}} \ar[dr]_{\e{g}} & Ff(\e{y}) \ar[r]^{Ff(\e{j})} \ar[dr]_{Ff(\e{k})} & FfFj(\e{z}) \\
\drtwocell[0.6]{ur}{\e{\alpha}} & Fg(\e{y}) \ar[dr]_{Fg(\e{k})} \ar[u]_{F\alpha_\e{y}} \drtwocell[0.6]{ur}{Ff(\e{\beta})} & FfFk(\e{z}) \ar[u]_{Ff(F\beta_\e{z})} \\
& & FgFk(\e{z}) \ar[u]_{F\alpha_{Fk(\e{z})}} \\
}.
\end{equation*}

\item Identity 1-cells are $ (1_x, 1_{\e{x}})\colon (x,\e{x}) \to (x,\e{x})$
and identity 2-cells are $ (1_f, 1_{\e{f}})\colon (f,\e{f}) \To (f,\e{f})$.
\end{itemize}

By projecting onto the first component of $\el F$ we obtain a 2-functor $P_F\colon \el F \to B$.
\end{Con}

\begin{Prop}
$P_F \colon \el F \to B$ is a split 2-fibration.
\end{Prop}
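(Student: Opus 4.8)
The plan is to equip $P_F$ with an explicit ``identity'' cleavage, verify cartesianness via the hom-category pullback criterion, and then read the split axioms off the \emph{strictness} of $F$. For the cleavage: given a $0$-cell $(y,\e{y})$ and $f\colon b\to y$ in $B$, put $\c{f}{(y,\e{y})}:=\bigl(f,\,1_{Ff(\e{y})}\bigr)\colon\bigl(b,Ff(\e{y})\bigr)\to(y,\e{y})$, so $f^{*}(y,\e{y})=\bigl(b,Ff(\e{y})\bigr)$; given $(g,\e{g})\colon(x,\e{x})\to(y,\e{y})$ and $\alpha\colon f\To g$ in $B$, put $\c{\alpha}{(g,\e{g})}:=(\alpha,1)\colon\bigl(f,\,F\alpha_{\e{y}}\circ\e{g}\bigr)\To(g,\e{g})$ with identity second component. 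In each case $P_F$ applied to the lift is $f$, resp.\ $\alpha$.

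To see $\c{f}{(y,\e{y})}$ is cartesian I would invoke the Proposition following Definition~\ref{def:cartesian_1-cell_strict} and check that the comparison functor $\el F\bigl((z,\e{z}),(b,Ff(\e{y}))\bigr)\to B(z,b)\times_{B(z,y)}\el F\bigl((z,\e{z}),(y,\e{y})\bigr)$ is an isomorphism of categories for every $(z,\e{z})$; on objects this amounts to the strict identity $F(fu)(\e{y})=Fu(Ff(\e{y}))$, and on morphisms to the analogous identity for $2$-cells, so the relevant hom-square is a pullback in $\Cat$. The same reasoning one dimension down, applied to $P_{(x,\e{x}),(y,\e{y})}$ --- whose fibre over $f$ is the hom-category $Fx(\e{x},Ff(\e{y}))$, with reindexing along $\alpha$ given by postcomposition with $F\alpha_{\e{y}}$ --- shows $(\alpha,1)$ cartesian and in fact exhibits each $P_{xy}$ as a \emph{split} fibration cloven by the $\c{\alpha}{-}$; thus $P_F$ is locally split and conditions (1), (2) of the definition of $2$-fibration hold.

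It remains to handle horizontal composition and the split identities. I would first show the sharper statement that chosen cartesian $2$-cells are closed under $*$: feeding $\c{\beta}{(k,\e{k})}$ and $\c{\alpha}{(g,\e{g})}$ into the horizontal-composition formula of Construction~\ref{twoGrothCon}, with both second components identities, the resulting $2$-cell connects the two $1$-cells $Ff(\e{j})\circ\e{f}$ and $F(\beta*\alpha)_{\e{z}}\circ Fg(\e{k})\circ\e{g}$, and these agree because $\e{f}=F\alpha_{\e{y}}\circ\e{g}$, $\e{j}=F\beta_{\e{z}}\circ\e{k}$, $F(\beta*\alpha)_{\e{z}}=Ff(F\beta_{\e{z}})\circ F\alpha_{Fk(\e{z})}$ (strict preservation of $*$), and $Ff(\e{k})\circ F\alpha_{\e{y}}=F\alpha_{Fk(\e{z})}\circ Fg(\e{k})$ (naturality of $F\alpha$ at $\e{k}$); hence the pasting degenerates to an identity and $\c{\beta}{(k,\e{k})}*\c{\alpha}{(g,\e{g})}$ is the chosen cartesian lift of $\beta*\alpha$ at $(k,\e{k})\circ(g,\e{g})$. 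Since any cartesian $2$-cell over a fixed $B$-$2$-cell is a chosen one precomposed with an invertible vertical $2$-cell, Proposition~\ref{prop:chosen_2-cells_enough_strict} then upgrades this to condition (3) --- horizontal composites of cartesian $2$-cells are cartesian --- and simultaneously gives the third equation of \eqref{split_composition}. The remaining equations of \eqref{split_composition} and \eqref{split_identities} are short direct checks of the same flavour, e.g.\ $\c{fg}{(y,\e{y})}=(fg,1)=\c{f}{(y,\e{y})}\circ\c{g}{f^{*}(y,\e{y})}$ by $F(fg)=Fg\circ Ff$ and $\c{1_{y}}{(y,\e{y})}=(1_{y},1_{\e{y}})=1_{(y,\e{y})}$ by $F1_{y}=1_{Fy}$. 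I expect the horizontal-composition step to be the one real obstacle: one must see that the second-component pasting genuinely collapses to an identity $2$-cell, and this is precisely where $F$ being a \emph{strict} $2$-functor (not merely a pseudofunctor) is indispensable.
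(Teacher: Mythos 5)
Your proposal is correct and follows essentially the same route as the paper: the same identity-second-component cleavage, cartesianness checked via the (equivalent) pullback/unique-lifting criterion, the same computation showing chosen cartesian 2-cells are closed under horizontal composition using 2-naturality of $F\alpha$ and strictness of $F$ and $Ff$, and the same appeal to Proposition \ref{prop:chosen_2-cells_enough_strict} to pass from chosen to arbitrary cartesian 2-cells. The remaining split equations are dispatched in the paper exactly as you indicate, as routine consequences of strictness.
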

\begin{proof}
It is easy to show that $\el F$ is a 2-category. Associativity and unit laws rely on the fact that $F$ is a 2-functor and that it maps into $\two\Cat$. Now notice that in the first component of $\el F$ composition is just composition in $B$. Thus $P_F$ is a 2-functor.

We need to show that $P_F$ is a 2-fibration. Suppose that $(y,\e{y})$ in $\el F$ and $f \colon x\to y$ in $B$. We claim that $(f,1_{Ff(\e{y})}) \colon (x,Ff(\e{y})) \to (y,\e{y})$ is cartesian over $f$. This can be verified by examining the diagram
\begin{equation*}
\cd[@R+12pt@C+24pt]{
(z,\e{z}) \pair{drr}{(g,\e{g})}{(g',\e{g}')} \dltwocell{drr}{(\alpha,\e{\alpha})} \ar@{.>}@/^13pt/[d]^{(h,\e{g})} \dotltwocell{d}{(\beta,\e{\alpha})} \ar@{.>}@/_13pt/[d]_{(h',\e{g}')} & & \\
(x,Ff(\e{y})) \ar[rr]_{(f,1_{Ff(\e{y})})} & & (y,\e{y})
}
\hspace{1cm}
\cd[@R+18pt@C+12pt]{
z \pair{drr}{g}{g'} \dltwocell{drr}{\alpha} \pair{d}{h}{h'} \ltwocell{d}{\beta} & & \\
x \ar[rr]_f & & y
}
\end{equation*}
and showing that the indicated lifts are unique.

Suppose that $(g,\e{g}) \colon (x,\e{x}) \to (y,\e{y})$ in $\el F$ and $\alpha \colon f\To g \colon x\to y$ in $B$. We claim that $(\alpha,1_{F\alpha_\e{y} \e{g}}) \colon (f,F\alpha_\e{y} \e{g})\To(g,\e{g})$ is cartesian over $\alpha$. 
This can be verified by examining the diagram
\begin{equation*}
\cd[@R+12pt@C+12pt]{
(h,\e{h}) \ar@{=>}[dr]^{(\beta,\e{\beta})} \ar@{:>}[d]_{(\sigma,\e{\beta})} & \\
(f,F\alpha_\e{y}\e{g}) \ar@{=>}[r]_-{(\alpha,1_{F\alpha_\e{y}\e{g}})} & (g,\e{g})
}
\hspace{1cm}
\cd[@R+12pt@C+30pt]{
h \ar@{=>}[d]_{\sigma} \ar@{=>}[dr]^{\beta} & \\
f \ar@{=>}[r]_{\alpha} & g
}
\end{equation*}
and showing that the indicated lift is unique.

When
\begin{equation*}
\cd[]{
(x,\e{x}) \ar[r]^{(f,\e{f})} & (y,\e{y}) \ar[r]^{(g,\e{g})} & (z,\e{z}) \\
x \pair{r}{f'}{f} \dtwocell{r}{\alpha} &
y \pair{r}{g'}{g} \dtwocell{r}{\sigma} &
z
}
\end{equation*}
the chosen cartesian lifts of $\alpha$ and $\sigma$ compose to give
\begin{align*}
(\alpha,1_{F\alpha_\e{y} \e{f}}) * (\sigma,1_{F\sigma_\e{z} \e{g}})
& = (\alpha*\sigma, Ff'(1_{F\sigma_\e{z}\e{g}})*1_{F\alpha_\e{y}\e{f}})\\
& = (\alpha*\sigma, 1_{Ff'(F\sigma_\e{z}\e{g})F\alpha_\e{y}\e{f}})\\
& = (\alpha*\sigma, 1_{F(\sigma*\alpha)_\e{z}Ff(\e{g})\e{f}})
\end{align*}
because $Ff$ is a 2-functor, $F\alpha$ is 2-natural and $F$ is a 2-functor. Thus \emph{chosen} cartesian 2-cells are closed under horizontal composition and by Proposition \ref{prop:chosen_2-cells_enough_strict} \emph{all} cartesian 2-cells are closed under horizontal composition. Thus $P_F$ is a 2-fibration.

We need to show that $P_F$ is split. The equations above demonstrate that $P_F$ is horizontally split. The other two conditions are a matter of routine verification.


\end{proof}

\begin{Con}[Pseudo-inverse to the Grothendieck construction] 
\label{twoGrothConInv}
Suppose that $P\colon E \to B$ is a split 2-fibration.
We define a functor $F_P\colon B^\coop \to \two\Cat$ as follows:

\vspace{4pt}
\noindent \emph{on 0-cells:} $F_Pb = E_b$ for all $b\in B$. $E_b$ is the fibre of $P$ over $b$ i.e.\ the sub-category of $E$ with 0-, 1-, and 2-cells being those that map to $b$, $1_b$ and $1_{1_b}$.

\vspace{4pt}
\noindent \emph{on 1-cells:} $F_Pf = f^*\colon E_{b'} \to E_b$ is the 2-functor described by the following diagram.
\begin{equation*}
\cd[@C+32pt@R+18pt]{
f^*e \dotpair{d}{f^*k}{f^*h} \dotrtwocell{d}{f^*\alpha} \ar[r]^{\c{f}{e}} &
e \pair{d}{k}{h} \rtwocell{d}{\alpha} \\
f^*e' \ar[r]_{\c{f}{e'}} & e'
}
\Mapsto{P}
\cd[@C+32pt@R+18pt]{
b \ar[r]^f \pair{d}{1_b}{1_b} \twoeq{d} & b' \pair{d}{1_{b'}}{1_{b'}} \twoeq{d}\\
b \ar[r]_f & b'
}
\end{equation*}
It sends $e$ to the domain of $\c{f}{e}$. It sends $h$, $k$ to the unique $f^*h$, $f^*k$ over $1_b$ generated by the cartesian 1-cell $\c{f}{e'}$ and $\alpha$ to the unique $f^*\alpha$ over $1_{1_b}$.

\vspace{4pt}
\noindent \emph{on 2-cells:} $F_P\sigma=\sigma^*\colon g^* \To f^*\colon E_{b'} \to E_b$ is the 2-natural transformation described by the following diagram.
\begin{equation*}
\cd[@R-24pt@C+20pt]{
 & f^*e \ar@/^5pt/[dr]^{\c{f}{e}} & \\
g^*e \ar@{.>}@/^5pt/[ur]^{\sigma_e^*} \ar@/_16pt/[rr]_{\c{g}{e}} \dtwocell[0.35]{rr}{~\c{\sigma}{\c{g}{e}}} &  & e \\
}
\Mapsto{P}
\cd[@C+20pt]{
b \pair[16pt]{rr}{f}{g} \dtwocell{rr}{\sigma} & & b'
}
\end{equation*}
We take the cartesian lift of $\sigma$ at $\c{g}{e}$ and uniquely factorise its domain as $\c{f}{e} \sigma_e^*$. Then $(F_P\sigma)_e = \sigma_e^*$. This unique factorisation is explained in Proposition \ref{prop:1-cells_factor_strict}.

\end{Con}

\begin{Prop}
$F_P$ is a 2-functor $B^\coop \to \two\Cat$.
\end{Prop}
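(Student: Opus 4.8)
The plan is to verify well-definedness of $F_P$ on $0$-, $1$- and $2$-cells, and then to check the $2$-functor axioms (identities together with all forms of composition), exploiting throughout the same principle: two $1$-cells, or two $2$-cells, of a fibre that lie over the same cell of $B$ and induce the same factorisation through a chosen cartesian lift must be equal, by the uniqueness clauses of Definition~\ref{def:cartesian_1-cell_strict}; and the split equations \eqref{split_composition}--\eqref{split_identities} guarantee that the relevant chosen cartesian lifts compose strictly, which is what turns ``equal up to a canonical iso'' statements into ``equal on the nose'' statements.

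\emph{Well-definedness.} Each fibre $E_b$ is closed under the composites and identities of $E$ (a $2$-functor sends them to $1_b$ and $1_{1_b}$), hence is a $2$-category. For $f\colon b\to b'$ in $B$, the assignment $f^*$ of Construction~\ref{twoGrothConInv} is well posed because $\c{f}{e'}$ is cartesian: for $h,k\colon e\to e'$ in $E_{b'}$ and $\alpha\colon h\To k$ there are unique lifts $f^*h, f^*k$ over $1_b$ and $f^*\alpha$ over $1_{1_b}$ filling the displayed squares. That $f^*$ is a $2$-functor --- preservation of $1$-cell composition and identities, of vertical composition and identities of $2$-cells, and of the interchange law --- is obtained in each case by noticing that the two cells to be compared lie over the same cell of $E_b$ and factor identically through $\c{f}{e''}$ (viewed, in the $2$-cell cases, inside the hom-category), whence they agree by uniqueness of lifts. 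For a $2$-cell $\sigma\colon f\To g$ in $B$ and $e\in E_{b'}$, the cartesian lift $\c{\sigma}{\c{g}{e}}$ has domain a $1$-cell over $f$ with codomain $e$, so by Proposition~\ref{prop:1-cells_factor_strict} it factors uniquely as $\c{f}{e}\,\sigma^*_e$ with $\sigma^*_e$ over $1_b$; this defines the component. Its $2$-naturality --- for $k\colon e\to e'$ in $E_{b'}$, the $1$-cell identity $f^*k\cdot\sigma^*_e=\sigma^*_{e'}\cdot g^*k$ and the accompanying $2$-cell square --- is again forced: both $1$-cells concerned lie over $1_b$, and post-composing with $\c{f}{e'}$ and comparing with $\c{\sigma}{\c{g}{e'}}$ and the defining square of $g^*k$ shows that each induces the same factorisation through the cartesian $1$-cell $\c{g}{e'}$, so they coincide by uniqueness.

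\emph{The $2$-functor axioms for $F_P$.} Preservation of identity $1$-cells, $(1_b)^*=1_{E_b}$, follows at once from $\c{1_{Pe}}{e}=1_e$ in \eqref{split_identities}. Preservation of $1$-cell composition, $(gf)^*=f^*g^*$, uses the first equation of \eqref{split_composition}: the chosen lift of a composite is the composite of chosen lifts, so on every cell $(gf)^*$ and $f^*g^*$ solve the same lifting problem and are therefore equal. Local (contravariant) functoriality on $2$-cells, $(1_f)^*=1_{f^*}$ and $(\tau\sigma)^*=\sigma^*\tau^*$, follows by the same pattern applied in the fibre direction, using the second equation of \eqref{split_composition} and \eqref{split_identities}; and compatibility with whiskering and horizontal composition of $2$-cells uses the third equation of \eqref{split_composition}, $\c{\alpha*\gamma}{kj}=\c{\alpha}{k}*\c{\gamma}{j}$, which is precisely the horizontally-split condition. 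In each instance one writes down the two cells to be compared, observes that each is pinned down by a factorisation through a chosen cartesian $1$- or $2$-cell, and concludes by uniqueness.

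I expect the genuine work to be concentrated in the $2$-naturality of $\sigma^*$ and in the whiskering/horizontal-composition axiom, since those require chaining cartesian lifts simultaneously in the base direction (inside $B$) and in the fibre direction and keeping careful track of which universal property is invoked at each arrow; the remaining verifications are bookkeeping driven by the split equations \eqref{split_composition}--\eqref{split_identities} and the uniqueness of cartesian lifts, in line with the ``somewhat routine'' character of this section.
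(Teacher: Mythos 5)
Your proposal is correct and follows essentially the same route as the paper: every verification is reduced to the uniqueness clause of a chosen cartesian lift, with the split equations for composites of $1$-cells, vertical composites of $2$-cells, and horizontal composites of $2$-cells supplying the on-the-nose identities $(gf)^*=f^*g^*$, $(\alpha\beta)^*=\beta^*\alpha^*$ and the compatibility with whiskering. You also correctly locate the only nontrivial computation in the $2$-naturality of $\sigma^*$, which the paper likewise handles by a two-stage uniqueness argument (first along the cartesian $1$-cell $\c{g}{e'}$, then along the cartesian $2$-cell $\c{\sigma}{\c{g}{e'}}$).
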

\begin{proof}
First of all, it is clear that $Fb = E_b$ is well defined as a 2-category.
Second, when $f\colon b \to b'$ in $B$ we get $ \c{f}{e'}(f^*\alpha.f^*\beta) = (\alpha.\beta)\c{f}{e}$.
Thus by the uniqueness of liftings along $\c{f}{e}$ we have $f^*(\alpha.\beta)= f^*\alpha. f^*\beta$.
The diagrams are:
\begin{equation*}
\cd[@C+1.0cm@R+0.6cm]{
f^*e \ar[r]^{\c{f}{e }} \triple{d}{f^*k}{f^*h}{f^*g}{18pt} \lltwocell{d}{f^*\alpha}{f^*\beta} & e \triple{d}{k}{h}{g}{18pt} \lltwocell{d}{\alpha}{\beta}\\
f^*e' \ar[r]_{\c{f}{e'}} & e'
}
\hspace{1cm}
\cd[@C+1.0cm@R+0.6cm]{
f^*e \ar[r]^{\c{f}{e }} \pair[12pt]{d}{f^*h}{f^*h} \ltwocell{d}{f^*1_h} & e \pair[12pt]{d}{h}{h} \ltwocell{d}{1_h}\\
f^*e' \ar[r]_{\c{f}{e'}} & e'
}
\end{equation*}
Similarly, we have $ \c{f}{e'}1_{f^*h} = 1_h\c{f}{e}$ and thus $f^*1_h = 1_{f^*h}$.
Very similar arguments tell us that $f^*(\beta*\alpha) = f^*\beta * f^*\alpha$ and $f^*(1_e) = 1_{f^*e}$. Thus $f^*$ is a 2-functor.

On 2-cells, when $\sigma\colon g \To f$ we display the 2-natural transformation $\sigma^*$ as
\begin{equation*}
\cd[@C+1.0cm]{
 & g^*e \ar[dr]^{\c{g}{e }} \dotpair{dd}{}{} \dotltwocell[0.65]{dd}{} & \\
f^*e \dtwocell[0.35]{urr}{\c{\sigma}{\c{g}{e }}} \ar[rr]_(0.75){\c{f}{e }} \ar[ur]^{\sigma^*_{e }} \pair{dd}{f^*h}{f^*k} \ltwocell{dd}{f^*\alpha} & & e \pair{dd}{h}{k} \ltwocell{dd}{\alpha}\\
 & g^*e' \ar[dr]^(0.3){\c{g}{e'}}  & \\
f^*e' \ar[rr]_{\c{f}{e'}} \ar[ur]^(0.7){\sigma^*_{e'}} \dtwocell[0.35]{urr}{\c{\sigma}{\c{g}{e'}}} & & e'
}
\end{equation*}
where the 2-cell at the back is $g^*\alpha$.
We want to show that $g^*\alpha. \sigma^*_e$ equals $\sigma^*_{e'} .f^*\alpha $. 
For simplicity, let $\eta = \c{\sigma}{\c{g}{e }}$ and $\tau = \c{\sigma}{\c{g}{e'}}$. 
Since $g^*\alpha \sigma^*_e$ and $\sigma^*_{e'} f^*\alpha $ both map down to $1_b$, if we can show that $\c{g}{e'} g^*\alpha \sigma^*_e = \c{g}{e'} \sigma^*_{e'} f^*\alpha $ then they must be equal because of uniquness of lifts along cartesian 1-cells. 
By the same reasoning, this last equation will hold if we can show that $\tau f^*k.\c{g}{e'} (g^*\alpha) \sigma^*_e = \tau f^*k.\c{g}{e'} (\sigma^*_{e'}) f^*\alpha$. 
Now we use various combinations of the middle-four interchange, cartesian 2-cells are split, the front square commutes and back-right square commutes to obtain:
\begin{align*}
\tau f^*k.\c{g}{e'} (\sigma^*_{e'}) f^*\alpha & = \c{f}{e'} f^*\alpha . \tau f^*h \\
& = \alpha \c{f}{e} . h\eta \\
& = \alpha \c{g}{e} \sigma^*_e . k\eta \\
& = \tau.\c{g}{e'} (g^*\alpha) \sigma^*_e\ .
\end{align*}
Thus the back-left composites are equal and $\sigma^*$ is a 2-natural transformation.

Now we need to show that $F_P$ preserves composition and identities.
For vertical composition of 2-cells, we get
\begin{equation*}
\cd[@R-16pt@C+16pt]{
 & h^*e \ar[dr]^{\c{h}{e}} & \\
g^*e \ar[ur]^{(\alpha\beta)^*_e} \ar@/_18pt/[rr]_{\c{g}{e}}
\dtwocell[0.35]{rr}{\c{\alpha\beta}{\c{g}{e}}} & & e \\
}
\hspace{1cm}
\cd[@R-16pt@C+16pt]{
& & h^*e \ar[ddr]^{\c{h}{e}} & \\
& f^*e \ar[ur]^{\beta^*_e} \ar@/_4pt/[drr]_(0.6){\c{f}{e}} \dtwocell[0.3]{rr}{\c{\beta}{\c{f}{e}}} & &\\
g^*e \ar[ur]^{\alpha^*_e} \ar@/_18pt/[rrr]_{\c{g}{e}} \dtwocell[0.35]{rr}{\c{\alpha}{\c{g}{e}}} & & & e \\
}
\end{equation*}
and assert that
$\c{\alpha\beta}{\c{g}{e}} = \c{\alpha}{\c{g}{e}} . \c{\beta}{\c{f}{e}}\alpha^*_e$
because of the splitness conditions. Since $(\alpha\beta)^*_e$ is defined via unique factorisation it follows directly that $(\alpha\beta)^* = \beta^* \alpha^*$. Similarly, $\c{1_g}{\c{g}{e}} = 1_{\c{g}{e}}$ and hence $ (1_g)^* = 1_{g^*}$. Thus 2-cell composition is preserved.

For composition of 1-cells, we get
\begin{equation*}
\cd[@C+48pt@R+24pt]{
g^*f^*e \ar[r]^{\c{g}{f^*e}} \pair[14pt]{d}{g^*f^*h}{g^*f^*k} \rtwocell{d}{g^*f^*\alpha}
& f^*e \ar[r]^{\c{f}{e}} \pair[14pt]{d}{f^*h}{f^*k} \rtwocell{d}{f^*\alpha}
& e \pair[14pt]{d}{k}{h} \rtwocell{d}{\alpha} \\
g^*f^*e' \ar[r]_{\c{g}{f^*e'}} & f^*e' \ar[r]_{\c{f}{e'}} & e'
}
\end{equation*}
and assert that
$\c{gf}{e} = \c{f}{e} . \c{g}{f^*e}$
because of the splitness conditions. It follows directly that $f^*g^*=(gf)^*$. Similarly, $\c{1_{b'}}{e} = 1_e$ and hence $(1_{b'})^* = 1_{E_{b'}}$. Thus 1-cell composition is preserved.

For horizontal composition of 2-cells we get
\begin{equation*}
\cd[@!C@C+6pt@R-24pt]{
 & & h^*k^*e \ar[dr]^{\c{h}{k^*e}} & &\\
 & h^*f^*e \ar[dr]^(0.7){\c{h}{f^*e}} \ar[ur]^{h^*\tau^*_e} & & k^*e \ar[dr]^{\c{k}{e}} & \\
g^*f^*e \dtwocell[0.35]{rr}{\c{\sigma}{\c{g}{f^*e}}} \ar@/_12pt/[rr]_{\c{g}{f^*e}} \ar[ur]^{\sigma^*_{f^*e}} & & f^*e \dtwocell[0.35]{rr}{\c{\tau}{\c{f}{e}}} \ar[ur]^{\tau^*_e} \ar@/_12pt/[rr]_{\c{f}{e}} & & e
}
\end{equation*}
and
\begin{equation*}
\cd[@!C@C-6pt@R-24pt]{
 & & hk^*e \ar[ddrr]^{\c{hk}{e}} & & \\
 & & & & \\
gf^*e \dtwocell[0.4]{rrrr}{\c{\sigma\tau}{\c{gf}{e}}} \ar@/_12pt/[rrrr]_{\c{gf}{e}} \ar[uurr]^{(\sigma\tau)^*_{e}} & & & & e
}.
\end{equation*}
The top-left 1-cells are the components of $(\sigma\tau)^*$ and $\tau^*\sigma^*$. By the splitness conditions on 2-cells these two diagrams are equal as 2-cells. By the splitness conditions on 1-cells the top-right 1-cells are equal. Finally, by uniqueness of factorisation, the top-left 1-cells are equal.
\end{proof}

To prove that the Grothendieck construction is surjective up to isomorphism, we will need the following two results.

\begin{Prop} \label{prop:1-cells_factor_strict}
Suppose $P\colon E\to B$ is a 2-fibration. Every $f\colon x \to z$ in $E$ factors uniquely as
\begin{equation*}
\cd[@C+12pt]
{
x \ar[d]_{\hat{f}} \ar[dr]^f & \\
 y \ar[r]_{\c{Pf}{z}} & z
}
\end{equation*}
where $P\hat{f} = 1_{Px}$.
\end{Prop}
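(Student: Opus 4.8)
The plan is to recognise this as the familiar ``every map factors as a vertical map followed by a chosen cartesian map'' statement, and to derive it directly from the one-dimensional cartesian property (part~(\ref{cart_prop_one}) of Definition~\ref{def:cartesian_1-cell_strict}) applied to the chosen cartesian lift $\c{Pf}{z}$.

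First I would unwind the types. Let $y$ denote the domain of $\c{Pf}{z}$, i.e.\ $\c{Pf}{z}\colon y \to z$ is the chosen cartesian lift of $Pf\colon Px \to Pz$ at $z$. By construction $\c{Pf}{z}$ is a cartesian 1-cell and $P(\c{Pf}{z}) = Pf$; in particular $Py = \dom(P\c{Pf}{z}) = \dom(Pf) = Px$. Then in the base $B$ we have the trivial factorisation $Pf = P(\c{Pf}{z})\circ 1_{Px}$, where $1_{Px}\colon Px \to Py$ is legitimately typed precisely because $Py = Px$.

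Next I would apply property~(\ref{cart_prop_one}) of Definition~\ref{def:cartesian_1-cell_strict} to the cartesian 1-cell $\c{Pf}{z}\colon y \to z$, taking the test 1-cell ``$h$'' to be $f\colon x \to z$ itself and the base 1-cell ``$u$'' to be $1_{Px}$. The hypothesis $Pf = P(\c{Pf}{z})\circ 1_{Px}$ is exactly the required compatibility, so the cartesian property yields a unique $\hat f\colon x \to y$ with $P\hat f = 1_{Px}$ and $f = \c{Pf}{z}\circ \hat f$. This is precisely the asserted factorisation, and the uniqueness of $\hat f$ (among 1-cells lying over $1_{Px}$ and satisfying $f = \c{Pf}{z}\circ\hat f$) is part of the conclusion of property~(\ref{cart_prop_one}), so the factorisation is unique as claimed.

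There is essentially no obstacle here: the only point requiring a moment's care is the bookkeeping identification $Py = Px$, which makes $1_{Px}$ an admissible choice for the base 1-cell, after which the statement is an immediate instance of the defining cartesian property. I would therefore keep the written proof to a couple of sentences, citing Definition~\ref{def:cartesian_1-cell_strict}(\ref{cart_prop_one}).
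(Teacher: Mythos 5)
Your proposal is correct and follows exactly the paper's own argument: both apply property~(1) of Definition~\ref{def:cartesian_1-cell_strict} to the chosen cartesian lift $\c{Pf}{z}$, taking $h=f$ and $u=1_{Px}$, and read off the unique $\hat f$ over $1_{Px}$ with $f=\c{Pf}{z}\hat f$. The only difference is that you spell out the bookkeeping identification $Py=Px$, which the paper leaves implicit.
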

\begin{proof}
Simply note that
\begin{equation*}
\cd[@C+12pt]
{
 x \ar[dr]^f & \\
 y \ar[r]_{\c{Pf}{z}} & z
}
\hspace{1cm}\text{over}\hspace{1cm}
\cd[@C+12pt]
{
Px \ar[d]_{1_{Px}} \ar[dr]^{Pf} & \\
 Px \ar[r]_{Pf} & Pz
}
\end{equation*}
and there exists a unique $\hat{f}\colon x \to y$ with $P\hat{f} = 1_{Px}$ and $f = \c{Pf}{z} \hat{f}$.
\end{proof}

\begin{Prop} \label{prop:2-cells_factor_strict}
Suppose $P\colon E\to B$ is a 2-fibration. Every $\alpha\colon f \To g$ factors uniquely as
\begin{equation*}
\cd[]
{
w \pair{drr}{f}{g} \dtwocell{drr}{\alpha} & & \\
 & & z
}
=
\cd[@C+16pt@R-6pt]
{
w \ar@/^12pt/[rr]^{\hat{f}} \ar[dr]_{\hat{g}} \dtwocell[0.5]{rr}{\hat{\alpha}} & & y \ar[dr]^{\c{Pf}{z}} & \\
 & x \ar[ur]^(0.6){\hat{h}} \ar@/_12pt/[rr]_{\c{Pg}{z}} \dtwocell[0.3]{rr}{\c{P\alpha}{\c{Pg}{z}}} & & z
}
\end{equation*}
where $P\hat{f} = P\hat{g} = P\hat{h} = 1_{Pw}$ and $P\hat{\alpha}=1_{1_{Pw}}$.
\end{Prop}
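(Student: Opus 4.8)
The plan is to reduce the assertion to the one-dimensional factorisation of Proposition~\ref{prop:1-cells_factor_strict}, applied to three separate 1-cells, together with two uses of the cartesian lifting properties: one for the chosen cartesian 2-cell $\c{P\alpha}{\c{Pg}{z}}$ and one for the chosen cartesian 1-cell $\c{Pf}{z}$.

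First I would pin down all the 1-cell data. Put $y=(Pf)^*z$ and $x=(Pg)^*z$, so that $\c{Pf}{z}\colon y\to z$ and $\c{Pg}{z}\colon x\to z$ are cartesian and $Px=Py=Pw$. Proposition~\ref{prop:1-cells_factor_strict} gives unique $\hat f\colon w\to y$ and $\hat g\colon w\to x$ over $1_{Pw}$ with $f=\c{Pf}{z}.\hat f$ and $g=\c{Pg}{z}.\hat g$. Writing $d\colon x\to z$ for the domain of $\c{P\alpha}{\c{Pg}{z}}\colon d\To\c{Pg}{z}$, we have $Pd=Pf$, so a second application of Proposition~\ref{prop:1-cells_factor_strict} supplies a unique $\hat h\colon x\to y$ over $1_{Pw}$ with $d=\c{Pf}{z}.\hat h$. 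At this point everything appearing in the claimed factorisation is determined except the 2-cell $\hat\alpha$.

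Second I would transport the target of $\alpha$ down from $Pg$ to $Pf$. Since cartesian 2-cells are closed under whiskering by 1-cells (part of the 2-fibration axioms, via the remark after the definition of 2-fibration), the 2-cell $\beta\colon d.\hat g\To g$ obtained by precomposing $\c{P\alpha}{\c{Pg}{z}}$ with $\hat g$ is cartesian for $P$ and lies over $P\alpha$. Both $\alpha$ and $\beta$ are 2-cells into $g$ lying over $P\alpha$, and in $B(Pw,Pz)$ one has $P\alpha=P\alpha.1_{Pf}$; hence the cartesian property of $\beta$, viewed as a 1-cell of $E(w,z)$ over $P_{wz}$, produces a unique $\alpha'\colon f\To d.\hat g$ over $1_{Pf}$ with $\alpha=\beta.\alpha'$. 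Then I would lift $\alpha'$ through $\c{Pf}{z}$: since $f=\c{Pf}{z}.\hat f$ and $d.\hat g=\c{Pf}{z}.(\hat h.\hat g)$ exhibit both the source and target of $\alpha'$ as composites of the cartesian 1-cell $\c{Pf}{z}$ with 1-cells ($\hat f$, respectively $\hat h.\hat g$) lying over $1_{Pw}$, the two-dimensional lifting property of Definition~\ref{def:cartesian_1-cell_strict}(2) --- applied with $\sigma=\alpha'$, $\tau=1_{1_{Pw}}$ and these two lifts --- yields a unique $\hat\alpha\colon\hat f\To\hat h.\hat g$ over $1_{1_{Pw}}$ with $\alpha'=\c{Pf}{z}.\hat\alpha$. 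Pasting the two steps together gives $\alpha=\beta.(\c{Pf}{z}.\hat\alpha)$, and this pasting is exactly the composite appearing on the right-hand side of the statement.

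For uniqueness I would run the argument backwards: $\hat f$, $\hat g$ and $\hat h$ are forced by the uniqueness clause of Proposition~\ref{prop:1-cells_factor_strict} (applied to $f$, $g$ and $d$); any competing $\hat\alpha$ over $1_{1_{Pw}}$ must have $\c{Pf}{z}.\hat\alpha$ equal to $\alpha'$ by uniqueness of the cartesian factorisation of $\alpha$ through $\beta$; and then $\hat\alpha$ itself is pinned down by the uniqueness in Definition~\ref{def:cartesian_1-cell_strict}(2). The only real obstacle is bookkeeping rather than a genuine difficulty: the target 1-cell $g$ does not in general factor through $\c{Pf}{z}$, which is precisely why the target must first be moved from $Pg$ to $Pf$ by means of the cartesian 2-cell $\c{P\alpha}{\c{Pg}{z}}$ before the two-dimensional cartesian property of $\c{Pf}{z}$ can be brought to bear, and one has to check carefully that the orientations of $P\alpha$, $1_{Pf}$ and $1_{1_{Pw}}$ and all the whiskerings line up as required.
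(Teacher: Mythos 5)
Your proposal is correct and follows essentially the same route as the paper: factor $f$ and $g$ through their chosen cartesian 1-cell lifts, take the chosen cartesian lift of $P\alpha$ at $\c{Pg}{z}$ and whisker it with $\hat g$ to pull the comparison down to a 2-cell over $1_{Pf}$ (your $\alpha'$ is the paper's $\eta$), factor the domain of that lift through $\c{Pf}{z}$ to obtain $\hat h$, and finally invoke the two-dimensional lifting property of the cartesian 1-cell $\c{Pf}{z}$ to produce $\hat\alpha$. Your uniqueness discussion is, if anything, slightly more explicit than the paper's.
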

\begin{proof}
The proof is similar to that above but somewhat more involved.
Begin by uniquely factoring $f = \c{Pf}{z}\cdot\hat{f}$ and $g = \c{Pg}{z}\cdot\hat{g}$ by Proposition \ref{prop:1-cells_factor_strict}.
Now take the cartesian lift of $P\alpha$ at $\c{Pg}{z}$:
\begin{equation*}
\cd[@C=2.8cm]
{
 x  \pair[14pt]{r}{h}{\c{Pg}{z}} \dtwocell[0.22]{r}{\c{P\alpha}{\c{Pg}{z}}} & z
}
\hspace{1cm}\text{over}\hspace{1cm}
\cd[@C+18pt]
{
 Px \pair[14pt]{r}{Pf}{Pg} \dtwocell[0.45]{r}{P\alpha} & Pz
}.
\end{equation*}
Then $\c{P\alpha}{\c{Pg}{z}}\hat{g}$ is cartesian over $P\alpha$ and
\begin{equation*}
\cd[@C+18pt]
{
 f \ar@{=>}[dr]^{\alpha} & \\
 h\hat{g} \ar@{=>}[r]_{\c{P\alpha}{\dots}\hat{g}} & g
}
\hspace{1cm}\text{over}\hspace{1cm}
\cd[@C+18pt]
{
Pf \ar@{=>}[d]_{1_{Pf}} \ar@{=>}[dr]^{P\alpha} & \\
 Pf \ar@{=>}[r]_{P\alpha} & Pg
}
\end{equation*}
so there exists a unique $\eta$ with $P\eta = 1_{Pf}$ and
\begin{equation*}
\cd[]
{
w \pair{drr}{f}{g} \dtwocell{drr}{\alpha} & & \\
 & & z
}
=
\cd[@C+16pt@R-6pt]
{
w \ar@/^12pt/[rr]^{\hat{f}} \ar[dr]_{\hat{g}} & & y \ar[dr]^{\c{Pf}{z}} &  \\
\dtwocell[0.7]{urr}{\eta} & x \pair[12pt]{rr}{h}{\c{Pg}{z}} \dtwocell[0.3]{rr}{\c{P\alpha}{\c{Pg}{z}}} & & z
}.
\end{equation*}
By Proposition \ref{prop:1-cells_factor_strict} we factor $h = \c{Pf}{z}\cdot\hat{h}$ uniquely where $P\hat{h} = 1_{Px}$. Finally, we observe that
\begin{equation*}
\cd[@C+38pt@R+18pt]{
w \pair[8pt]{dr}{f}{} \dltwocell{dr}{\eta} \pair[8pt]{d}{\hat{f}}{\hat{h}\hat{g}} & \\
y \ar[r]_{\c{Pf}{z}} & z
}
\hspace{1cm}\text{over}\hspace{1cm}
\cd[@C+38pt@R+18pt]{
Pw \pair[8pt]{dr}{Pf}{Pf} \dltwocell[0.5]{dr}{1_{Pf}} \pair[8pt]{d}{1_{Pw}}{1_{Pw}} \ltwocell{d}{1_{1}} & \\
Pw \ar[r]_{Pf} & Pz
}
\end{equation*}
so there exists a unique $\hat{\alpha}\colon \hat{f} \To \hat{h}\hat{g}$ over $1_{1_{Pw}}$ with $\eta = \hat{\alpha}\c{Pf}{z}$ and hence a unique factorisation of $\alpha$ as stated.
\end{proof}

\begin{Rk}
This last result (Proposition \ref{prop:2-cells_factor_strict}) is recognised by Hermida in Proposition 2.4 of \cite{hermida1999}. He doesn't explicitly mention the uniqueness of such factorisations.
\end{Rk}

\begin{Rk}
The factorisations of Proposition \ref{prop:1-cells_factor_strict} and \ref{prop:2-cells_factor_strict} are unique up to choice of cartesian lift. In each of these results we have implicitly supposed that $P$ is cloven and that the factorisation occurs through the chosen cartesian lift. In fact, there is a unique factorisation for every cleavage on the fibration.
\end{Rk}

\begin{Con}[Surjective up to isomorphism] 
In order to show that the Grothendieck construction is surjective up to isomorphism, we need to find for every $P$ an invertible map of fibrations
\begin{equation*}
\cd[@C-1pt]{
\el{F_P} \ar[rr]^H \ar[dr]_{\pi}& & \ar[dl]^{P} E \\
 & B &
}.
\end{equation*}
First, what is $\el F_P$? Its data consists of:
\begin{enumerate}[1\emph{-cells:}]
\setcounter{enumi}{-1}
\item pairs $(x,\e{x})$ where $\e{x} \in E$ and $P\e{x}=x$.
\item pairs $(f,\e{f})\colon (x,\e{x}) \to (y,\e{y})$ where $f\colon x\to y$ in $B$ and $\e{f}~\colon~\e{x}\to~f^*(\e{y})$ in $E_x$.
\item pairs $(\alpha,\e{\alpha})\colon (f,\e{f}) \To (g,\e{g})\colon (x,\e{x}) \to (y,\e{y})$ where $\alpha\colon f\to g$ in $B$ and $\e{\alpha}\colon \e{f} \to \alpha^*_{\e{y}} \e{g}$ in $E_x$.
\begin{equation*}
\cd[@C+18pt]{
\e{x} \ar[r]^{\e{f}} \ar[dr]_{\e{g}} & f^*({\e{y}}) \\
\drtwocell[0.65]{ur}{\e{\alpha}} & g^*(\e{y}) \ar[u]_{\alpha^*_{\e{y}}}
}
\end{equation*}
\end{enumerate}

We define $H\colon \el F_P \to E$ by
\begin{equation*}
\cd[@C+1.0cm]{
(x,\e{x}) \pair[16pt]{r}{(f,\e{f})}{(g,\e{g})} \dtwocell[0.4]{r}{(\alpha,\e{\alpha})} & (y,\e{y})
}
\hspace{0.5cm}{\longmapsto}\hspace{0.5cm}
\cd[@C+2.5cm]{
\e{x} \pair[16pt]{r}{\c{f}{\e{y}}\e{f}}{\c{g}{\e{y}}\e{g}} \dtwocell[0.2]{r}{\c{\alpha}{\e{y}}\e{g}.\c{f}{\e{y}}\e{\alpha}} & \e{y}
}.
\end{equation*}
The action on 2-cells is to send $(\alpha,\e{\alpha})$ to
\begin{equation*}
\cd[@C+16pt@R-6pt]{
\e{x} \dtwocell[0.5]{rr}{\e{\alpha}} \ar@/^12pt/[rr]^{\e{f}} \ar[dr]_{\e{g}} & & f^*{\e{y}} \ar[dr]^{\c{f}{\e{y}}} & \\
 & g^*\e{y} \ar[ur]^(0.6){\alpha^*_{\e{y}}} \ar@/_12pt/[rr]_{\c{g}{\e{y}}} \dtwocell[0.37]{rr}{\c{\alpha}{\c{g}{\e{y}}}} & & \e{y}
}.
\end{equation*}
The reader can verify that this is 2-functorial.
\end{Con}

\begin{Prop} \label{H_is_iso} 
$H$ is a split cartesian isomorphism.
\end{Prop}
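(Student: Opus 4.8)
The plan is to verify that $H\colon \el F_P \to E$ is a 2-functor that is bijective on 0-cells, 1-cells and 2-cells, that it commutes with the projections ($P H = \pi$), and that it preserves chosen cartesian maps; splitness-cartesianness then follows from Proposition~\ref{prop:preserve_chosen_maps_enough_strict}, and being an isomorphism of 2-categories over $B$ makes it automatically a split cartesian isomorphism of 2-fibrations.

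First I would establish that $P H = \pi$: this is immediate from the formulas, since $H$ sends $(f,\e f)$ to a 1-cell lying over $Pf = f$ and $(\alpha,\e\alpha)$ to a 2-cell over $P\alpha = \alpha$, while $\pi$ is projection onto the first coordinate. Next I would check bijectivity dimension by dimension. On 0-cells $H$ is literally the identity, since a 0-cell of $\el F_P$ is a pair $(x,\e x)$ with $P\e x = x$, which is just the datum of the object $\e x \in E$. On 1-cells, given $\e f\colon \e x \to \e y$ in $E$, Proposition~\ref{prop:1-cells_factor_strict} gives a unique factorisation $\e f = \c{P\e f}{\e y}\cdot \widehat{\e f}$ with $P\widehat{\e f}=1$; setting $f = P\e f$ and taking the component $\e f{}' := \widehat{\e f}\colon \e x \to f^*\e y$ in $E_x$ produces the unique preimage $(f,\e f{}')$, so $H$ is a bijection on 1-cells. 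On 2-cells the same role is played by Proposition~\ref{prop:2-cells_factor_strict}: a 2-cell $\e\alpha\colon \e f \To \e g$ in $E$ factors uniquely through the chosen cartesian 1- and 2-cells, and unwinding that factorisation in the fibre $E_x$ against the definitions of $f^*$, $g^*$ and $\alpha^*$ in Construction~\ref{twoGrothConInv} identifies it with a unique pair $(\alpha,\e\alpha{}')$.

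Then I would confirm that $H$ preserves chosen cartesian lifts. The chosen cartesian lift of $f\colon x\to y$ at $(y,\e y)$ in $\el F_P$ is $(f,1_{f^*\e y})\colon (x, f^*\e y)\to (y,\e y)$, and $H$ sends this to $\c f{\e y}\cdot 1 = \c f{\e y}$, which is exactly the chosen cartesian lift of $f$ at $\e y$ in $E$; similarly for chosen cartesian 2-cells, where the lift $(\alpha, 1)$ maps under $H$ to $\c\alpha{\e y}\e g$ composed with an identity, matching the chosen cartesian lift in $E$. Given that $H$ is a bijective-on-everything 2-functor over $B$ preserving chosen cartesian maps, its inverse (built from the factorisation propositions) is likewise a 2-functor over $B$, and by Proposition~\ref{prop:preserve_chosen_maps_enough_strict} both $H$ and $H^{-1}$ preserve all cartesian maps; hence $H$ is a split cartesian isomorphism.

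The main obstacle I anticipate is not any single hard idea but the bookkeeping in the 2-cell case: one must carefully match the composite 2-cell $\c\alpha{\e y}\e g \cdot \c f{\e y}\e\alpha$ appearing in the definition of $H$ on 2-cells against the unique factorisation of an arbitrary 2-cell of $E$ furnished by Proposition~\ref{prop:2-cells_factor_strict}, and check that the ``component'' $\e\alpha{}'$ one extracts really is a 2-cell $\e f{}' \To \alpha^*_{\e y}\e g{}'$ in the fibre $E_x$ in the precise sense of Construction~\ref{twoGrothConInv}. This amounts to transporting the splitness equations \eqref{split_composition}–\eqref{split_identities} and the defining property of $\sigma^* = F_P\sigma$ through the unique-factorisation machinery, which is routine but notation-heavy; I would present it as a diagram chase and leave the fully expanded verification to the reader, as the paper does elsewhere.
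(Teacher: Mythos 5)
Your proposal is correct and follows essentially the same route as the paper: verify $PH=\pi$ directly, use the bijection on 0-cells given by $(Pe,e)\mapsto e$ together with Propositions~\ref{prop:1-cells_factor_strict} and~\ref{prop:2-cells_factor_strict} for bijectivity on 1- and 2-cells, and observe that $H$ sends the chosen cartesian maps (identities in the second component) to the chosen cartesian maps of $E$. The only cosmetic difference is that you explicitly cite Proposition~\ref{prop:preserve_chosen_maps_enough_strict} to upgrade ``preserves chosen cartesian maps'' to ``cartesian,'' which the paper leaves implicit.
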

\begin{proof}
First,
\begin{align*}
\pi(x,\e{x})   = x   &= P\e{x}   = PH(x,\e{x}) \\
\pi(f,\e{f})   = f   &= P\e{f}   = PH(f,\e{f}) \\
\pi(\alpha,\e{\alpha}) = \alpha &= P\e{\alpha} = PH(\alpha,\e{\alpha})
\end{align*}
so $\pi = PH$. Second, the chosen cartesian maps in $\el F_P$ are those with identities in the second component. Since $H$ acts by post-composition with chosen cartesian maps it is split cartesian. Third, for every $e\in E$ there exists a unique $(Pe,e)\in \el F_P$ with $H(Pe,e) = e$ so $H$ is bijective on objects. Then Proposition \ref{prop:1-cells_factor_strict} tells us that for every $f\in E$ there exists a unique $\hat{f}$ with $H(f,\hat{f}) = \c{f}{e}\hat{f} = f$ so $H$ is bijective on 1-cells. Proposition \ref{prop:2-cells_factor_strict} gives the same result on 2-cells. Thus $H$ is an isomorphism.
\end{proof}

\begin{Thm}\label{Thm:GC1}
The Grothendieck construction is the action on objects of a 3-functor
$$\el\colon [B^\coop, \two\Cat] \to \two\Fib_s(B)$$
and this is an equivalence.
\end{Thm}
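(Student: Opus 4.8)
The plan is to promote $\el$ from an assignment on objects to a genuine 3-functor and then verify it is a triequivalence by the standard criterion: surjective on objects up to isomorphism, and locally a biequivalence of hom-2-categories. First I would define the action of $\el$ on cells. A 2-natural transformation $\theta\colon F\To G$ induces a 2-functor $\el\theta\colon\el F\to\el G$ over $B$ acting by $(x,\e{x})\mapsto(x,\theta_x\e{x})$ on 0-cells, by $(f,\e{f})\mapsto(f,\theta_x\e{f})$ on 1-cells --- the second component typechecks since strict 2-naturality gives the equality of functors $\theta_x\circ Ff=Gf\circ\theta_y$ --- and by $(\alpha,\e{\alpha})\mapsto(\alpha,\theta_x\e{\alpha})$ on 2-cells; since $\el\theta$ fixes first components and sends identity second components to identities, it preserves the chosen cartesian lifts of Construction \ref{twoGrothCon} and is therefore split cartesian. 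A modification $m\colon\theta\Rrightarrow\phi$ induces the vertical 2-natural transformation with component $(1_x,m_x)$ at $(x,\e{x})$, and a perturbation induces the evident vertical modification. That $\el$ respects all identities and all three forms of composition is then a diagram chase using that $F$ and $G$ land in $\two\Cat$ and that the transformations involved are strict; this is routine but calculation-heavy, and I would state it and leave the checks to the reader as the paper does elsewhere.

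For the triequivalence, surjectivity up to isomorphism on objects is already in hand: by the construction preceding Proposition \ref{H_is_iso}, every split 2-fibration $P\colon E\to B$ carries a split cartesian isomorphism $H\colon\el F_P\to E$ over $B$, so $P\cong\el(F_P)$ in $\Fib_s(B)$. It then remains to show, for each pair $F,G\colon B^\coop\to\two\Cat$, that
\[\el_{F,G}\colon[B^\coop,\two\Cat](F,G)\longrightarrow\Fib_s(B)(\el F,\el G)\]
is a biequivalence of 2-categories, which unwinds to three statements: (i) every split cartesian $K\colon\el F\to\el G$ over $B$ is isomorphic to $\el\theta$ for some $\theta$; (ii) for 2-natural $\theta,\phi$, every vertical 2-natural $\Lambda\colon\el\theta\To\el\phi$ is isomorphic to $\el m$ for some modification $m$; and (iii) for modifications $m,n\colon\theta\Rrightarrow\phi$, $\el$ is a bijection from perturbations $m\Rrightarrow n$ to vertical modifications $\el m\Rrightarrow\el n$. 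In each case the transformation, modification, or perturbation is recovered by restriction to fibres: for (i) one sets $\theta_x$ to be $K$ restricted to $(\el F)_x=Fx$ and $(\el G)_x=Gx$ --- this is exactly the object-level construction $F_{(-)}$ applied cellwise --- reads off the 2-naturality of $\theta$ from the way $K$ transports chosen cartesian lifts, and then invokes the unique factorisations of Propositions \ref{prop:1-cells_factor_strict} and \ref{prop:2-cells_factor_strict} to show that $K$ is forced to agree with $\el\theta$ up to canonical comparison 2-cells; since $K$ is \emph{split} these comparisons are identities and $K=\el\theta$ exactly. Steps (ii) and (iii) run the same argument one dimension down and are comparatively straightforward.

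The main obstacle is step (i): reconstructing a strictly 2-natural $\theta$ from an arbitrary split cartesian $K$, and in particular checking that its coherence datum is uniquely determined rather than merely determined up to isomorphism. This is precisely where the splitness hypotheses, together with the uniqueness clauses of Propositions \ref{prop:1-cells_factor_strict}--\ref{prop:2-cells_factor_strict}, are indispensable --- drop them and $\theta$ survives only up to coherent isomorphism, which is exactly the additional difficulty Section \ref{sec:weak} must confront. The only other obstacle is pure bookkeeping, namely the full pasting-diagram verification of 3-functoriality of $\el$ on composites of modifications and perturbations.
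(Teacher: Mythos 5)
Your proposal is correct and follows essentially the same route as the paper: the same cellwise definitions of $\el$ on transformations, modifications and perturbations, surjectivity on objects via the isomorphism $H$ of Proposition \ref{H_is_iso}, and recovery of the inverse on hom-2-categories by restricting a split cartesian 2-functor to fibres (the paper's $\bar\eta_x(a)=\pi_2(\eta(x,a))$). The only cosmetic difference is that you initially aim for a local biequivalence and then observe that splitness collapses the comparison cells to identities, whereas the paper proves the local isomorphism directly; the content is the same.
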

\begin{proof}
We have already shown that $\el$ is surjective up to isomorphism on objects (Proposition \ref{H_is_iso}).
To show that $\el$ is an equivalence we need to define its action on 1,2,3-cells and show that it is locally an isomorphism.

Suppose $\eta \colon F \To G$ is a 2-natural transformation in $[B^\coop,\two\Cat]$. We define $\el{\eta} \colon \el F \to \el G$ by
\begin{equation*}
\cd[@C+64pt@R+18pt]{
 (x,\e{x}) \pair[20pt]{d}{(g,\e{g})}{(f,\e{f})} \rtwocell{d}{(\alpha,\e{\alpha})} \\
(y,\e{y})
}
\Mapsto{}
\cd[@C+64pt@R+18pt]{
 (x,\eta_x\e{x}) \pair[20pt]{d}{(g,\eta_x\e{g})}{(f,\eta_x\e{f})} \rtwocell{d}{(\alpha,\eta_x\e{\alpha})} \\
(y,\eta_y\e{y})}.
\end{equation*} This is a split cartesian 2-functor from $P_F$ to $P_G$.
Suppose $\Gamma \colon \eta \Rrightarrow \epsilon$ is a modification in $[B^\coop,\two\Cat]$. We define $\el{\Gamma} \colon \el{\eta} \To \el{\epsilon}$ by
$$\el{\Gamma}_{(x,\e{x})} = (1_x,({\Gamma_x})_\e{x}) \colon (x,\eta_x \e{x}) \to (x, \epsilon_x \e{x}).$$
where $ \el{\Gamma}_{(x,\e{x})} \colon \el{\eta}(x,\e{x}) \to \el{\epsilon}(x,\e{x})$. This is a vertical 2-natural transformation.
Suppose $\zeta \colon \Gamma\to\Lambda$ is a perturbation in $[B^\coop,\two\Cat]$. We define $\el{\zeta} \colon \el{\Gamma}\Rrightarrow\el{\Lambda}$ by
$$\el{\zeta}_{(x,\e{x})} = (1_{1_x},({\zeta_x})_\e{x}) \colon (1_x,({\Gamma_x})_\e{x}) \To (1_x, ({\Lambda_x})_\e{x})$$
where $\el{\zeta}_{(x,\e{x})} \colon \el{\Gamma}_{(x,\e{x})} \To \el{\Lambda}_{(x,\e{x})}$. This is a vertical modification.
This defines $\el$ on 1,2,3-cells and it is 3-functorial.

Now suppose that $\eta \colon \el F \to \el G$ is a split cartesian 2-functor. Define $\bar{\eta}\colon F \To G$ by
$\bar{\eta}_x(a) = \pi_2(\eta(x,a))$,
$\bar{\eta}_x(f) = \pi_2(\eta(1_x,f))$,
$\bar{\eta}_x(\sigma) = \pi_2(\eta(1_{1_x},\sigma))$. This is 2-natural because $\eta$ is a split cartesian 2-functor. Then $\el(\bar{\eta}) = \eta$ and is unique with that property. Thus $\el$ is bijective on 1-cells.

Suppose that $\Gamma \colon \el{\eta} \To \el{\epsilon}$ is a vertical 2-natural transformation.
Define $\bar{\Gamma}\colon \eta \Rrightarrow \epsilon$ by $(\bar{\Gamma}_x)_{\e{x}} = \pi_2(\Gamma_{(x,\e{x})})$.
This is a modification because $\Gamma$ is 2-natural and $\eta$, $\epsilon$ are split cartesian.
Then $\el(\bar{\Gamma}) = \Gamma$ and is unique with that property. Thus $\el$ is bijective on 2-cells.

Suppose that $\theta \colon \el{\Gamma} \Rrightarrow \el{\Lambda}$ is a vertical modification.
Define $\bar{\theta}\colon \Gamma \Rrightarrow \Lambda$ by $(\bar{\theta}_x)_{\e{x}} = \pi_2(\theta_{(x,\e{x})})$.
This is a perturbation because $\theta$ is a modification and $\eta$, $\epsilon$ are split cartesian.
Then $\el(\bar{\theta}) = \theta$ and and is unique with that property. Thus $\el$ is bijective on 3-cells.

This makes $\el$ locally an isomorphism and thus an equivalence.
\end{proof}

\begin{Rk}
The action of the Grothendieck construction on objects is described by Bakovic in \cite{bakovic2012} section 6. Section 5 of the same paper gives a partial description of the action on objects of the pseudo-inverse. With some adjustments, we have completed the second construction (Theorem 5.1) and shown that together they form an equivalence of 3-categories. It was in completing Bakovic's description of the pseudo-inverse to the Grothendieck construction that we discovered that cartesian 2-cells must be closed under post-composition with all 1-cells.
\end{Rk}

\begin{Rk}[Non-split 2-fibrations]\label{strict_non-split_2-fib}
The Grothendieck construction for non-split 2-fibrations is somewhat more complicated than demonstrated above. We chose to first build the Grothendieck construction for fibred bicategories and then to observe how the arguments simplify when restricted to 2-fibrations. We found that non-split 2-fibrations correspond to a slightly odd kind of trihomomorphism (see Remark \ref{rk:variation_weak}). We found however that 2-fibrations that are locally and horizontally split correspond to homomorphisms of $\two\Cat$-enriched bicategories $B^\coop \to \two\Cat$ (enriched in $\two\Cat$ as a monoidal bicategory). This is somewhat more pleasing.
\end{Rk}

\begin{Rk}[Dual constructions]
All of these results could be adjusted to describe three other kinds of fibrations: \emph{op-2-fibrations}, \emph{co-2-fibrations} and \emph{coop-2-fibrations}. 
They correspond to `op'-contravariant, `co'-contravariant and covariant 2-functors into $\two\Cat$. 
The dual Grothendieck constructions are obtained by reversing the direction of the second components of 1- and 2-cells in $\el F$ (in 2-cells, in 1-cells or in both).
We could reasonably refer to coop-2-fibrations as \emph{2-opfibrations}. 
In that case cartesian 1- and 2-cells would be defined using pullbacks associated with pre-composition instead of post-composition. 
\end{Rk}

\subsection{Examples} \label{sec:examples_a}

When $C$ is a category, $\Fam(C)$ is the category of families of objects of $C$. The objects of $\Fam(C)$ are pairs $(I,X)$ where $I$ is a set and $X\colon I \to C$ is a functor.
The 1-cells are pairs $(u,\alpha)\colon (I,X) \to (J,Y)$ where $u\colon I \to J$ is a function and $\alpha$ is a natural transformation.
\begin{equation*}
\cd[]
{
 I \ar[dr]_{X} \ar[rr]^{u} & \rtwocell{d}{\alpha} & J \ar[dl]^{Y}\\
 & C &
}
\end{equation*}
Composition and identities are defined in the obvious way.
There is a functor $\pi \colon \Fam(C) \to \Set$ that is the projection onto the first component of $\Fam(C)$.
This is a well-known example of a fibration.

\begin{Con}[Families]
When $B$ is a 2-category we define $\Fam(B)$ to be the 2-category of `1-cell diagrams' in $B$. The objects of $\Fam(B)$ are pairs $(C,X)$ where $C$ is a small category and $ X\colon C^\op \to B$ is a pseudo-functor. The 1-cells are pairs $(F,\alpha)\colon (C,X) \to (D,Y)$ where $F \colon C \to D$ is a functor and $\alpha \colon X \To YF^\op$ is a pseudo-natural transformation.
The 2-cells are pairs $(\sigma, \Sigma) \colon (F,\alpha) \To (G,\beta)$ where $\sigma\colon F \To G$ is a natural transformation and $\Sigma$ is a modification
\begin{equation*}
\cd[@R+6pt]
{
 C^\op \ar[dr]_{X} \ar[rr]^{F^\op} & \rtwocell[0.55]{d}{\alpha} & D^\op \ar[dl]^{Y} & C^\op \ar[dr]_{X} \pair[8pt]{rr}{F^\op}{G^\op} \utwocell{rr}{\sigma^\op} & \rtwocell[0.65]{d}{\beta} & D^\op \ar[dl]^{Y} \\
 & B & \rthreecell{ur}{\Sigma} & & B &
}.
\end{equation*}
Composition and identities are defined in the obvious way.
There is a 2-functor $\pi \colon \Fam(B) \to \Cat$ defined by projection onto the first component of $\Fam(B)$.
\end{Con}

\begin{Ex} \label{ex:Fam_strict}
$\pi \colon \Fam(B) \to \Cat$ is a 2-fibration.
\end{Ex}
\begin{proof}
Suppose $(D,Y)$ in $\Fam(B)$ and $F\colon C \to D$ in $\Cat$. Its cartesian lift is $(F,1_{YF^\op})\colon(C,YF^\op)\to(D,Y)$.
Suppose that $(\sigma,\Sigma)\colon(G,\beta)\To(H,\gamma)\colon(E,Z)\to(D,Y)$ and $\sigma F = \lambda$ where $\lambda \colon J \To K$.
The unique lift of $\lambda$ is $(\lambda,\Sigma)\colon(J,\beta)\To(K,\gamma)\colon(E,Z)\to(C,YF^\op)$. The diagrams are
\begin{equation*}
\cd[@R+12pt@C+36pt]
{
(E,Z) \pair{dr}{(G,\beta)}{(H,\gamma)} \dltwocell{dr}{(\sigma,\Sigma)} \ar@{.>}@/^12pt/[d]^(0.65){(J,\beta)} \ar@{.>}@/_12pt/[d]_{(K,\gamma)} \dotltwocell{d}{(\lambda,\Sigma)} & \\
(C,YF^\op) \ar[r]_{(F,1)} & (D,Y)
}
\hspace{0.5cm}\text{over}\hspace{0.5cm}
\cd[@R+12pt@C+48pt]
{
E \pair[8pt]{dr}{G}{H} \dltwocell{dr}{\sigma} \pair[8pt]{d}{J}{K} \ltwocell{d}{\lambda} & \\
C \ar[r]_{F} & D
}.
\end{equation*}

Suppose $(G,\beta)\colon(C,X)\to(D,Y)$ in $\Fam(B)$ and $\sigma\colon F\To G\colon C \to D$ in $\Cat$. Its cartesian lift is $(\sigma,1_{Y\sigma.\beta})\colon(F,Y\sigma.\beta)\To(G,\beta)$.
Suppose that $(\lambda,\Lambda)\colon(H,\gamma)\To(G,\beta)$ and $\sigma \omega = \lambda$.
The unique lift of $\omega$ is $(\omega,\Lambda)\colon(H,\gamma)\To(F,Y\sigma^\op.\beta)$.
\begin{equation*}
\cd[@R+6pt]
{
(H,\gamma) \ar@{=>}[dr]^{(\lambda,\Lambda)} \ar@{:>}[d]_{(\omega,\Lambda)}& \\
(F,Y\sigma^\op.\beta) \ar@{=>}[r]_-{(\sigma,1)} & (G,\beta)
}
\hspace{1cm}
\cd[@R+6pt@C+16pt]
{
H \ar@{=>}[dr]^{\lambda} \ar@{=>}[d]_{\omega}& \\
F \ar@{=>}[r]_{\sigma} & G
}
\end{equation*}

Suppose that $\sigma\colon F \To G$ and $\tau\colon H \To K$ with $(G,\beta)\colon(C,X) \to (D,Y)$ and $(K,\delta)\colon (D,Y)\to(E,Z))$ and we compose the chosen cartesian lifts of $\sigma$ and $\tau$.
They are $(\sigma,1)$ and $(\tau,1)$ their composite $(\tau,1)*(\sigma,1)$ is isomorphic to $(\tau*\sigma,1)$ which is cartesian.
Thus by Proposition \ref{prop:chosen_2-cells_enough_strict} all cartesian 2-cells are closed under composition.
\end{proof}

\begin{Rk}
$\pi \colon \Fam(B) \to \Cat$ can be obtained by applying the Grothendieck construction to $F \colon \Cat^\coop \to \two\Cat$ defined on objects by $ F(C) = \Hom(C^\op,B)$.
\end{Rk}

\begin{Rk}
The above construction yields a 2-fibration that is split under composition of cartesian 1-cells but it is not split in any other sense. If we modify this construction by replacing pseudo-functors and pseudo-natural transformations with 2-functors and 2-natural transformations then the result is split in every way. This variation can be obtained by applying the Grothendieck construction to $F$ defined by $ F(C) = [C^\op,B]$.
\end{Rk}

\begin{Defn}
We say that an arrow $p\colon e \to b$ in a 2-category $B$ is a (split) fibration when $p_* \colon B(c,e) \to B(c,b)$ is a (split) fibration for all $c$ and the commuting square 
\begin{equation*}
\cd[]
{
B(c,e) \ar[d]_{p_*} \ar[r]^{f^*} & B(c',e) \ar[d]^{p_*}\\
B(c,b) \ar[r]_{f^*} & B(c',b)
}
\end{equation*}
is a (split) morphism of fibrations for all $f\colon c' \to c$.
\end{Defn}

\begin{Defn}
A morphism between (split) fibrations $p\colon e\to b$ and $q\colon e'\to b'$ in a 2-category $B$ is a pair $(f\colon e \to e', g\colon b \to b')$ where $q.f = g.p$ and 
\begin{equation*}
\cd[]
{
B(c,e) \ar[d]_{p_*} \ar[r]^{f_*} & B(c,e') \ar[d]^{q_*}\\
B(c,b) \ar[r]_{g_*} & B(c,b')
}
\end{equation*}
is a (split) morphism of fibrations for all $c$.
\end{Defn}

\begin{Con}[Internal fibrations]
The category of fibrations internal to a 2-category $B$ is denoted by $\Fib_B$.
The objects are fibrations $p\colon e \to b$ in $B$.
The 1-cells are morphisms of fibrations.
The 2-cells are pairs of 2-cells $(\alpha, \beta) \colon (f,g) \To (f',g')$ with $q\alpha = \beta p$.
Composition and identities are the same as in $B^\mathbbm{2}$.

There is a 2-functor $\cod \colon \Fib_B \to B$ defined by projection onto the codomain:
\begin{equation*}
\cd[@C+16pt@R+12pt]
{
e \pair{r}{f}{f'} \dtwocell{r}{\alpha} \ar[d]_{p} & e' \ar[d]^{q} \\
b \pair{r}{g}{g'} \dtwocell{r}{\beta} & b'
}
\Mapsto{\cod}
\cd[@C+16pt@R+12pt]
{
b \pair{r}{g}{g'} \dtwocell{r}{\beta} & b'
}.
\end{equation*}
When $B=\Cat$ we omit the subscript and $\Fib_\Cat$ is just $\Fib$. It is the category of fibrations in $\Cat$ and the codomain 2-functor is $\cod \colon \Fib \to \Cat$.
\end{Con}

\begin{Ex}
When $B$ has 2-pullbacks, $\cod \colon \Fib_B \to B$ is a 2-fibration.
\end{Ex}
\begin{proof}
Suppose $q\colon e' \to b'$ in $\Fib(B)$ and $g'\colon b \to b'$ in $B$, then there exists a map $(g,g')\colon (g')^*q\to q$ defined by taking the 2-pullback
\begin{equation*}
\cd[]
{
e \pullback \ar[r]^{g}  \ar[d]_{g^*q} & e' \ar[d]^{q} \\
b \ar[r]_{g'} & b'
}.
\end{equation*}
Since both pullbacks and fibrations in $B$ are defined representably and fibrations in $\Cat$ are closed under pullback, fibrations in $B$ must also be closed under pullback. 
The same argument ensures that $g$ is cartesian. 
Thus $(g,g')$ is well-defined as a 1-cell. 
To see that this is cartesian, suppose that $(h,h')\colon r \to q$ in $\Fib(B)$ and $h' = g'.f'$ in $B$. 
Then there exists a unique $f$ with $p.f=f'.r$ and $h=g.f$ and hence a unique $(f,f')$ with $(h,h') = (g,g')(f,f')$ and $\pi(f,f') = f'$. 
We know that $f$ is cartesian because $h$ is cartesian and $g$ reflects cartesian maps (again because pullbacks in $\Cat$ reflect cartesian maps). 
This same argument works for 2-cells into $e'$ so $(g,g')$ is cartesian. The diagram is
\begin{equation*}
\cd[]
{
e'' \ar[drr]^(0.3){h} \ar@{.>}[dr]_{f} \ar[d]_(0.4){r} &   & \\
b'' \ar[dr]_{f'} \ar[drr]^(0.3){h'} & e \pullback \ar[r]_{g}  \ar[d]^(0.4){p} & e' \ar[d]^(0.4){q} \\
   & b \ar[r]_{g'}   & b'
}.
\end{equation*}

Suppose that $(g,g')\colon p \to q$ in $\Fib(B)$ and $\alpha' \colon f' \To g'$ in $B$. Since $q$ is cartesian we can take the cartesian lift of $\alpha' p$ at $g$ (call it $\alpha$) and get a 2-cell $(\alpha,\alpha')\colon(f,f')\To(g,g')$. To show that this is cartesian, suppose that $(\gamma,\gamma')\colon(h,h')\To(g,g')$ and $\gamma' = \eta'\alpha'$. Since $\alpha$ is cartesian for $q$ and $q\gamma = \gamma' p = \eta'p. \alpha'p = \eta'p. q\alpha$ there exists a unique $\eta\colon h\To f$ and hence $(\eta,\eta')$ with $(\gamma,\gamma') = (\alpha,\alpha').(\eta,\eta')$. Thus $(\alpha,\alpha')$ is cartesian.
\begin{equation*}
\cd[@C+6pt]
{
h \ar@{=>}[dr]^{\gamma} \ar@{:>}[d]_{\eta} &  \\
f \ar@{=>}[r]_{\alpha} & g
}
\hspace{1.8cm}
\cd[@C+6pt]
{
qh \ar@{=>}[dr]^{q\gamma = \gamma'p} \ar@{=>}[d]_{\eta'p} &  \\
qf \ar@{=>}[r]_{q\alpha = \alpha'p} & qg
}
\end{equation*}

Suppose that we take the cartesian lifts of $\alpha'\colon f' \To g'$ and $\gamma'\colon h' \To k'$ at $(g,g')$ and $(k,k')$ as indicated below.
Since cartesian 2-cells for $r$ are closed under pre-composition with any 1-cell $\gamma g$ is cartesian.
Also since $h$ preserves cartesian maps for $q$ we know that $h \alpha$ is cartesian.
Then because cartesian 2-cells are closed under vertical composition $\gamma * \alpha = \gamma g.h\alpha$ is cartesian.
Thus by Proposition \ref{prop:chosen_2-cells_enough_strict} cartesian 2-cells are closed under composition.
\begin{equation*}
\cd[@C+12pt@R+6pt]
{
e \ar[d]_{p} \pair{r}{}{g} \dtwocell{r}{\alpha} & e' \ar[d]^{q} \pair{r}{h}{} \dtwocell{r}{\gamma} & e'' \ar[d]^{r} \\
b \pair{r}{}{} \dtwocell{r}{\alpha'} & b' \pair{r}{}{} \dtwocell{r}{\gamma'} & b''
}
\hspace{1cm}
\end{equation*}
\end{proof}

\begin{Rk}
If we apply the pseudo-inverse to the Grothendieck construction to $\cod \colon \Fib_B \to B$ we get $F \colon B^\coop \to \two\Cat$ defined by $ F(b) = \Fib_B/b$, the category of fibrations over $b$. Its action on 1-cells is to send $f\colon b \to b'$ to $f^*\colon \Fib_B/b' \to \Fib_B/b$ defined by pullback.
\end{Rk}

\begin{Rk}
Let $\Fib_B^s$ be the sub-2-category of $\Fib_B$ containing split fibrations and split maps. Suppose also that we can choose 2-pullbacks in $B$ in such a way that they are closed under composition in $B^\mathbbm{2}$ (not just up to isomorphism). Then the proof above requires only slight adjustments to show that $\cod\colon \Fib_B^s\to B$ is a split 2-fibration.
\end{Rk}

\begin{Ex}[Enriched Categories]
There is a 2-functor $\Mon\to\two\Cat$ that maps each monoidal category $\V$ to $\V\mhyphen\Cat$. We can use a dual to the Grothendieck construction to get a 2-opfibration $\Enr\to\Mon$. The objects of the total category $\Enr$ are enriched categories: pairs $(\V,A)$ where~$\V$ is a monoidal category and $A$ is a $\V$-enriched category. The rest of the structure can be deduced from the dual Grothendieck construction.
\end{Ex}

\begin{Ex}[Algebras]
Let $\Mnd(K)$ be the 2-category of 2-monads on a 2-category $K$. There is a 2-functor $F\colon\Mnd(K)^\coop\to\two\Cat$ that maps each 2-monad $T$ to the 2-category $T\mhyphen\Alg_l$ of strict $T$-algebras, lax algebra morphisms and algebra 2-cells. Each monad morphisms $\lambda \colon S \to T$ gives a 2-functor that acts on $m \colon TA \to A$ by precomposition with $\lambda_A\colon SA \to TA$. Each monad 2-cell $\Gamma \colon \lambda \To \tau$ gives a 2-natural transformation whose component at $m \colon TA \to A$ is $(1,1,m.\Gamma_A)$
\begin{equation*}
\cd[@C+0.5cm]{
SA \ar[d]_{\tau_A} \ar[r]^{1} & SA \ar[d]^{\lambda_A}\\
TA \ar[d]_{m}         & TA \ar[d]^{m}\\
A \ar[r]_{1} \dtwocell{uur}{m.\Gamma_A} & A
}.
\end{equation*}
$F$ is contravariant on 2-cells because we're using lax morphisms of algebras. If $F$ mapped each monad to $T\mhyphen\Alg_{oplax}$ containing the oplax morphisms then it would be covariant on 2-cells.
Further details on 2-monads can be found in \cite{kelly1974}. 

We can use the Grothendieck construction to construct an 2-fibration $\Alg\to\Mnd$. The objects of the total category $\Alg$ are algebras of a 2-monad: pairs $(S,(A,m))$ where $m\colon SA \to A$ is an $S$-algebra. The 1-cells from $(S,(A,m))$ to $(T,(B,n))$ are pairs $(\lambda,(f,\theta_f))$ where $\lambda$ is a monad morphism from $S$ to $T$ and $(f,\theta_f)\colon \lambda(A,m) \to F\lambda(B,n)$ is a lax algebra morphism
\begin{equation*}
\cd[]{
SA \ar[d]_{Sf} \ar[rr]^{m} \urtwocell{drr}{\theta_f} && A \ar[d]^{f}\\
SB \ar[r]_{\lambda_B} & TB \ar[r]_n & B
}.
\end{equation*}
The 2-cells of $\Alg$ are pairs $(\Gamma,\alpha)\colon (\lambda,(f,\theta_f))\to (\tau,(g,\theta_g))$ where $\Gamma\colon\lambda\to\tau$ is a monad 2-cell and $\alpha$ is an algebra 2-cell
\begin{equation*}
\cd[@C-20pt]{
(A,m) \ar[dr]_{(g,\theta_g)} \ar[rr]^-{(f,\theta_f)} & \dtwocell{d}{\alpha} & F\lambda(B,n) \\
 &  F\tau(B,n) \ar[ur]_{F\Gamma_{(B,n)}} & 
}.
\end{equation*}
The 2-fibration is projection on the first component of $\Alg$. By construction the fibre over $T$ is $T\mhyphen\Alg_l$.
\end{Ex}


\section{Fibred Bicategories}
\label{sec:weak}

What follows is the theory of fibrations developed specifically for bicategories. The concepts are not significantly different from  Section \ref{sec:strict} but the details are \emph{much} more complicated.

\subsection{Definitions and properties of cartesian 1- and 2-cells}
Let $P \colon \E \to \B$ be a homomorphism of bicategories.

\begin{Defn} \label{def:cartesian1_weak}
We say a 1-cell $f\colon x \to y$ in $\E$ is \emph{cartesian} when it has the following two properties:
\begin{enumerate}
\item Suppose that $g \colon z \to y$ in $\E$ with $h\colon Pz \to Px$ and
an isomorphism $\alpha \colon Pf.h \To Pg$
\begin{equation*}
\cd[@C+15pt@R+15pt]{
z  \ar[dr]^{g}& \\
x \ar[r]_{f} & y
}
\hspace{1.5cm}
\cd[@C+15pt@R+15pt]{
Pz \ar[d]_{h}   \ar[dr]^{Pg}& \\
Px \ar[r]_{Pf} \urtwocell[0.3]{ur}{\alpha} & Py
}.
\end{equation*}
Then there exists an $\hat{h}\colon z \to x$ and isomorphisms $\hat{\alpha} \colon f\hat{h} \To g$, $\hat{\beta} \colon P\hat{h} \To h$ such that $\alpha.Pf\hat{\beta} = P\hat{\alpha}.\phi_{hf}$.
We say that $(\hat{h},\hat{\alpha},\hat{\beta})$ is a \emph{lift} of $(h, \alpha)$.

\item Suppose that $\sigma\colon g \To g'$ in $\E$ and $h,h'\colon Pz \to Px$ with isomorphisms
$\alpha \colon Pf. h \To Pg$ and $\alpha' \colon Pf. h' \To Pg'$. Suppose also that $(h, \alpha)$ and $(h', \alpha')$ have lifts $(\hat{h},\hat{\alpha},\hat{\beta})$ and $(\hat{h'},\hat{\alpha'},\hat{\beta'})$. For any $\delta \colon h \To h'$ in $\B$ with $\alpha'.Pf\delta = P\sigma.\alpha$
\begin{equation*}
\cd[@C+2cm@R+1cm]{
z \ar@/_10pt/[d]_{\hat{h}} \ar@{.>}@/^10pt/[d]^{\hat{h'}} \pair{dr}{g'}{g} \urtwocell{dr}{\sigma} & \\
x \ar[r]_{f} \rtwocell[0.2]{ur}{\hat{\alpha}}
 & y \dotrtwocell[0.7]{ul}{\hat{\alpha'}}
}
\hspace{1cm}
\cd[@C+2cm@R+1cm]{
Pz \ar@/_10pt/[d]_{h} \ar@/^10pt/@{.>}[d]^{h'} \doturtwocell{d}{\delta} \pair{dr}{Pg'}{Pg} \urtwocell{dr}{P\sigma} & \\
Px \ar[r]_{Pf} \rtwocell[0.2]{ur}{\alpha} & Py \dotrtwocell[0.7]{ul}{\alpha'}
}
\end{equation*}
there exists a unique $\hat{\delta} \colon \hat{h} \To \hat{h'}$ such that $\hat{\alpha'}.f\hat{\delta} = \sigma.\hat{\alpha}$ and $\delta.\hat{\beta} = \hat{\beta'}.P\hat{\delta}$.
\end{enumerate}
\end{Defn}

Informally this is best stated by saying that $f$ lifts 1-cells up to isomorphism and lifts 2-cells coherently with the lifted isos. The uniqueness of lifted 2-cells implies that lifted 1-cells are unique up to a coherent isomorphism.

\begin{Prop} 
A 1-cell $f\colon x \to y$ is cartesian if and only if
\begin{equation*}
\cd[]{
\E(z,x) \ar[d]_{P_{zx}} \ar[r]^{f_*} & \E(z,y) \ar[d]^{P_{zy}} \twocong{dl} \\
\B(Pz,Px) \ar[r]_{Pf_*} & \B(Pz,Py)
}
\end{equation*}
is a bipullback for all $z$. This isomorphism is the coherence for $P$ on composition.
\end{Prop}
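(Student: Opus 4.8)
The plan is to recognise the square as the comparison functor from $\E(z,x)$ into the \emph{iso-comma category} (the pseudo-pullback) of the cospan $\B(Pz,Px) \xrightarrow{Pf_*} \B(Pz,Py) \xleftarrow{P_{zy}} \E(z,y)$, and then to show that the two conditions of Definition~\ref{def:cartesian1_weak} say, respectively, that this comparison is essentially surjective on objects and full and faithful, i.e.\ an equivalence — which is precisely what it means for the square to be a bipullback in $\Cat$. Fix $z$ throughout; both the bipullback condition and the two clauses of Definition~\ref{def:cartesian1_weak} are universally quantified over $z$, so it suffices to argue pointwise.

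First I would spell out the iso-comma category $\mathcal{P}_z$: its objects are triples $(h,g,\theta)$ with $h\colon Pz\to Px$ in $\B$, $g\colon z\to y$ in $\E$, and $\theta\colon Pf.h \To Pg$ an isomorphism; a morphism $(h,g,\theta)\to(h',g',\theta')$ is a pair $(\delta\colon h\To h',\ \sigma\colon g\To g')$ satisfying $\theta'.Pf\delta = P\sigma.\theta$. The compositor $\phi$ of $P$ (the coherence on composition) supplies the invertible $2$-cell filling the square, hence the canonical functor $K\colon \E(z,x)\to\mathcal{P}_z$ sending $\hat h$ to $(P\hat h,\ f\hat h,\ \phi_{hf})$ and a $2$-cell $\hat\delta$ to $(P\hat\delta,\ f\hat\delta)$; naturality of $\phi$ is exactly what makes this last pair a morphism of $\mathcal{P}_z$. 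The square is a bipullback by definition iff $K$ is an equivalence, and I would invoke the standard fact that a functor is an equivalence exactly when it is essentially surjective and full and faithful.

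Next I would match essential surjectivity of $K$ with part~(1). An object $(h,g,\theta)$ of $\mathcal{P}_z$ is precisely the input data $(h,g,\alpha)$ of part~(1), and an isomorphism $K(\hat h)\xrightarrow{\ \cong\ }(h,g,\alpha)$ in $\mathcal{P}_z$ is precisely a pair of isomorphisms $\hat\beta\colon P\hat h\To h$ and $\hat\alpha\colon f\hat h\To g$ with $\alpha.Pf\hat\beta = P\hat\alpha.\phi_{hf}$ — that is, a lift $(\hat h,\hat\alpha,\hat\beta)$ of $(h,\alpha)$ in the sense of Definition~\ref{def:cartesian1_weak}. Hence ``every object of $\mathcal{P}_z$ lies in the essential image of $K$'' unwinds verbatim to part~(1). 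Then I would match fullness and faithfulness with part~(2): a morphism $K(\hat h)\to K(\hat h')$ in $\mathcal{P}_z$ is a pair $(\delta,\sigma)$ with $\phi_{h'f}.Pf\delta = P\sigma.\phi_{hf}$, and $K$ full and faithful means exactly that there is a unique $\hat\delta$ with $P\hat\delta=\delta$ and $f\hat\delta=\sigma$. Conjugating along the iso-lifts produced in the previous paragraph turns an arbitrary morphism $(h,g,\alpha)\to(h',g',\alpha')$ of $\mathcal{P}_z$ (that is, a pair $(\delta,\sigma)$ with $\alpha'.Pf\delta=P\sigma.\alpha$) into a morphism $K(\hat h)\to K(\hat h')$; full faithfulness then produces the unique $\hat\delta$, and the two defining equations of part~(2) are exactly the statement that $K(\hat\delta)$, transported across the chosen iso-lifts, equals $(\delta,\sigma)$. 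Conversely, part~(2) applied with the identity lifts of $K(\hat h)$ and $K(\hat h')$ recovers full faithfulness. Running these translations in both directions finishes the proof.

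The main obstacle I anticipate is purely bookkeeping: keeping straight the directions of all the structural isomorphisms, and verifying that the compatibility equation $\alpha.Pf\hat\beta = P\hat\alpha.\phi_{hf}$ in Definition~\ref{def:cartesian1_weak} is literally the condition for $(\hat\beta,\hat\alpha)$ to be a morphism of $\mathcal{P}_z$, and that the two equations in part~(2) together say exactly that $K(\hat\delta)$ matches $(\delta,\sigma)$ after the evident conjugation. Once the iso-comma description and the ``essentially surjective $+$ full and faithful $\Leftrightarrow$ equivalence'' criterion are in place there is no further content; the only point of care is that the lifts in part~(2) are ``a'' lift rather than ``the'' lift, which is harmless because any two lifts of the same datum are uniquely isomorphic — itself a consequence of part~(2).
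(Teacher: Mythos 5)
Your proof is correct. The paper in fact states this proposition without proof, and the remark immediately following it (defining bipullback via the pseudo-natural equivalence with the iso-comma category $\Hom(A,\B(Pz,Px))\times_{\cong}\Hom(A,\B(Pz,Py))$) is precisely the reading your argument relies on: reducing the bipullback condition to the comparison functor $K\colon \E(z,x)\to\mathcal{P}_z$ being an equivalence, then matching essential surjectivity with clause~(1) and full faithfulness (via conjugation along the chosen iso-lifts, with the identity lifts handling the converse direction) with clause~(2) is exactly the intended, and correct, unwinding.
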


\begin{Rk} 
We use bipullback in the sense of Street and Joyal \cite{joyal1993}: a weakly-universal iso-square over $Pf_*$ and $P_{zy}$. That is, there is a pseudo-natural equivalence
$$ \Hom(A,\E(z,x)) \simeq \Hom(A,\B(Pz,Px))\times_{\cong}\Hom(A,\B(Pz,Py))$$
where the right expression is the pseudo-pullback (iso-comma-category).
\end{Rk}

\begin{Defn}  
A 2-cell $\alpha\colon f \To g\colon x \to y$ in $\E$ is \emph{cartesian} if it is cartesian as a 1-cell for the functor $P_{xy}\colon \E(x,y) \to \B(Px,Py)$.
\end{Defn}

As in Section \ref{sec:strict} we say that $P$ is \emph{locally fibred} when $P_{xy}\colon \E(x,y) \to \B(Px,Py)$ is a fibration for all $x$, $y$ in $\E$.

\begin{Defn}
Let $P \colon \E \to \B$ be a homomorphism. We say that $P$ is a \emph{fibration} when
\begin{enumerate}
\item for any $e \in \E$ and $f \colon b \to Pe$, there is a cartesian 1-cell $h \colon a \to e$ with $Ph = f$;
\item $P$ is locally fibred; and
\item the horizontal composite of any two cartesian 2-cells is cartesian.
\end{enumerate}
We say that $\E$ is a \emph{fibred bicategory} when it is the domain of a fibration.
\end{Defn}

\begin{Rk} 
We could insist that cartesian 1-cells only have $Ph \cong f$ and the definition above would not be any weaker. When $P$ is a fibration it is locally fibred and thus locally has the iso-lifting property. Now cartesian 1-cells isomorphic to a cartesian 1-cell are cartesian (see Proposition \ref{weak_1-cells_iso_cartesian} below). Thus if there is a cartesian lift $h$ with $Ph \cong f$ then there is a cartesian lift $h'$ with $Ph' = f$. The converse is trivial so the two definitions are equivalent.
\end{Rk}

\begin{Rk} 
Corollary 1 in \cite{joyal1993} states that if one leg of a cospan has the iso-lifting property then the pullback of that cospan is a bipullback. When $P$ is a fibration it is locally fibred and thus locally has the iso-lifting property. It follows that if a 1-cell is 2-categorically cartesian (Definition \ref{def:cartesian_1-cell_strict}) then it is bicategorically cartesian ( Definition \ref{def:cartesian1_weak}). As a result fibred 2-categories are also fibred bicategories.
\end{Rk}

We take the time here to establish a few basic properties of cartesian maps.

\begin{Prop} 
\label{weak_1-cells_iso_cartesian}
If $f \cong g$ then $f$ is cartesian if and only if $g$ is cartesian.
\end{Prop}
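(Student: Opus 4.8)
The plan is to work through the characterisation of cartesian 1-cells via bipullbacks, since that is the cleanest way to transport the property along an isomorphism. Suppose $\phi \colon f \To g \colon x \to y$ is an invertible 2-cell in $\E$, and suppose $f$ is cartesian. By the bipullback characterisation (the Proposition immediately preceding this statement), I would show the square for $g$ is a bipullback by exhibiting a pseudo-natural equivalence between it and the square for $f$. The key observation is that $\phi$ induces invertible 2-cells $\phi_* \colon f_* \To g_* \colon \E(z,x) \to \E(z,y)$ (whiskering $\phi$ into composites) and, applying $P$, an invertible 2-cell $P\phi_* \colon Pf_* \To Pg_*$ — up to the coherence isomorphism for $P$ on composition, which is itself invertible. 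So the two cospans $(Pf_*, P_{zy})$ and $(Pg_*, P_{zy})$ are connected by an invertible 2-cell in the arrow category, and bipullbacks of 2-isomorphic cospans agree up to equivalence. Since being a bipullback is manifestly invariant under such equivalence, the square for $g$ is a bipullback, so $g$ is cartesian. By symmetry of the hypothesis $f \cong g$, the converse direction is identical.

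Alternatively — and this may be the more transparent route given how the definitions are phrased — I would argue directly from Definition \ref{def:cartesian1_weak}. Given a lifting problem $(h, \alpha \colon Pf.h \To Pg')$ for $g$ with respect to the cartesian property of $g$, I would first transport it to a lifting problem for $f$: compose $\alpha$ with $P\phi \ast h \colon Pg.h \cdots$ — more precisely, since $Pf \cong Pg$ via $P\phi$, precompose to obtain an isomorphism $Pf.h \To Pg'$ factoring through $Pg.h$, solve it using that $f$ is cartesian to get $(\hat h, \hat\alpha, \hat\beta)$, and then post-compose $\hat\alpha \colon f\hat h \To g'$ with $\phi \ast \hat h \colon g \hat h \To f \hat h$ (inverse direction) to produce the required lift for $g$. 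The coherence condition $\alpha.Pf\hat\beta = P\hat\alpha.\phi_{hf}$ is checked by pasting in $P\phi$ and using naturality of the associativity/composition coherences of $P$; this is a routine but slightly fiddly 2-cell computation. The second condition (uniqueness of lifted 2-cells) transports the same way, and uniqueness is preserved because pre- and post-composition with the invertible $\phi$ is a bijection on the relevant hom-sets.

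I expect the main obstacle to be purely bookkeeping: keeping track of the coherence isomorphisms $\phi_{hf}$ for $P$ and the associators of $\E$ and $\B$ while verifying the compatibility equation in the definition of "lift". There is no conceptual difficulty — invertibility makes every step reversible — but the pasting diagrams must be assembled carefully so that the claimed $\hat\alpha$, $\hat\beta$ for $g$ genuinely satisfy the stated equation. For this reason I would favour the bipullback argument of the first paragraph, which pushes all the coherence into the single observation that 2-isomorphic cospans have equivalent bipullbacks, and avoids writing the pasting equation out by hand.
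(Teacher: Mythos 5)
Your first (and preferred) argument is exactly the proof the paper gives: pass to the bipullback characterisation, observe that $\phi$ and $P\phi$ paste invertible $2$-cells onto the bipullback square for $f$ so that the result is the coherence square for $g$, and invoke the stability of bipullbacks under pasting of isomorphisms. The proposal is correct and takes essentially the same route as the paper.
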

\begin{proof}
Suppose that $f$ is cartesian and $\alpha \colon f \To g$ is an isomorphism. In the diagram below: the inner isomorphism is the coherence of $P$ on composition with $f$. The isomorphisms above and below are induced by $\alpha$ and $P\alpha$ and thus the pasting is equal to the coherence of $P$ on composition with $g$. 
\begin{equation*}
\cd[@R+0.3cm]{
\E(z,x) \ar[d]_{P_{zx}} \pair[7pt]{r}{g_*}{f_*} \twocong{r} & \E(z,y) \ar[d]^{P_{zx}} \\
\B(Pz,Px) \pair[7pt]{r}{Pf_*}{Pg_*} \twocong{r} & \B(Pz,Py) \twocong[0.4]{ul}
}
\end{equation*}
By definition of cartesian 1-cell the inner isomorphism is a bipullback. Bipullbacks are closed under the pasting of isomorphisms as indicated. Thus whole diagram is a bipullback and $g$ is cartesian.
\end{proof}

\begin{Prop} 
\label{prop:composition2}
Suppose $f \colon w \to x$, $g \colon x \to y$ in $\E$. If $f$ and $g$ are cartesian then $gf$ is cartesian.
\end{Prop}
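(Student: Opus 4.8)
The plan is to run the bipullback characterisation of cartesian $1$-cells through a pasting argument, mirroring the proof of the analogous statement for $2$-fibrations but with ``pullback'' replaced by ``bipullback''.

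Fix an object $z \in \E$. By the bipullback characterisation of cartesian $1$-cells, the hypotheses that $f$ and $g$ are cartesian say precisely that the squares
\begin{equation*}
\cd[]{
\E(z,w) \ar[d]_{P_{zw}} \ar[r]^{f_*} & \E(z,x) \ar[d]^{P_{zx}} \twocong{dl} \\
\B(Pz,Pw) \ar[r]_{Pf_*} & \B(Pz,Px)
}
\qquad\qquad
\cd[]{
\E(z,x) \ar[d]_{P_{zx}} \ar[r]^{g_*} & \E(z,y) \ar[d]^{P_{zy}} \twocong{dl} \\
\B(Pz,Px) \ar[r]_{Pg_*} & \B(Pz,Py)
}
\end{equation*}
are bipullbacks, the filler $2$-cells being the coherence of $P$ on composition with $f$ and with $g$ respectively. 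First I would invoke the pasting lemma for bipullbacks: the iso-rectangle obtained by horizontally pasting these two squares over $Pz, Pw, Px, Py$ is again a bipullback. This can be read off the representable description of bipullback recorded in the Remark following that characterisation --- each $\Hom(A,-)$ turns the squares into pseudo-pullbacks of categories, pseudo-pullbacks of categories satisfy the usual pasting lemma, and the equivalences are pseudo-natural in $A$ --- or one simply cites \cite{joyal1993}.

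Next I would identify this outer rectangle with the square attached to $gf$: namely $(gf)_*$ along the top, $P(gf)_*$ along the bottom, vertical legs $P_{zw}$ and $P_{zy}$, filled by the coherence of $P$ on composition with $gf$. On the top edge, $g_*f_*$ differs from $(gf)_*$ by the associator of $\E$ at $z,w,x,y$; on the bottom edge, $Pg_*\,Pf_*$ differs from $P(gf)_*$ by the associator of $\B$; and I claim that pasting these two invertible cells onto the outer rectangle turns the horizontal composite of the two coherence cells into the single coherence cell of $P$ on composition with $gf$. This is exactly one of the coherence axioms for the homomorphism $P$, namely compatibility of the structure constraints $\phi$ with composition. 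Since bipullbacks are closed under pasting with isomorphisms --- the fact already used in the proof of Proposition \ref{weak_1-cells_iso_cartesian} --- the square attached to $gf$ is then a bipullback for every $z$, so $gf$ is cartesian.

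I expect this last identification to be the only real obstacle, and it is a coherence diagram chase rather than anything conceptually hard: one has to unwind what ``the coherence of $P$ on composition'' means as a $2$-cell, then match it against the associators of $\E$ and $\B$ and the homomorphism axiom for $P$. Everything else is either a citation (the pasting lemma for bipullbacks) or an appeal to results already in this section.
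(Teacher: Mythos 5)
Your proposal is correct and follows essentially the same route as the paper: paste the two bipullback squares for $f_*$ and $g_*$, correct the top and bottom edges by the associators of $\E$ and $\B$ so that the composite filler becomes the coherence cell $\phi$ of $P$ for $gf$, and conclude by closure of bipullbacks under horizontal pasting and under pasting with isomorphisms. The only difference is presentational: you spell out the reduction of the bipullback pasting lemma to pseudo-pullbacks via the representable description, where the paper simply asserts the pasting properties.
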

\begin{proof}
Suppose that $f$ and $g$ are cartesian. In the diagram below: the inner two isomorphisms are coherence of $P$ on composition with $f$ and $g$. The outer isomorphisms are induced by associativity of composition. Thus the pasting is equal to the coherence of $P$ on composition with $gf$.
\begin{equation*}
\cd[]{
\E(z ,w ) \ar[d]_{P_{zw}} \ar[r]^{f_*} \ar@/^0.8cm/[rr]^{(gf)_*} & \E(z,x) \ar[d]^{P_{zx}} \ar[r]^{g_*} & \E(z,y) \ar[d]^{P_{zy}} \\ 
\B(Pz,Pw) \ar[r]_{Pf_*} \ar@/_0.8cm/[rr]_{(Pgf)_*} & \B(Pz,Px) \ar[r]_{Pg_*} \twocong{ul} \ar@{}[ul]|(0.4){\phi_{\mhyphen f}}& \B(Pz,Py) \twocong{ul} \ar@{}[ul]|(0.4){\phi_{\mhyphen g}}
}
\end{equation*}
The inner two isos are bipullbacks by definition of cartesian 1-cell. 
Bipullbacks are closed under the pasting of isomorphisms as indicated. Thus whole diagram is a bipullback and $gf$ is cartesian.
\end{proof}

\begin{Prop} 
\label{prop:left-2-out-of-3_weak}
Suppose $f \colon w \to x$, $g \colon x \to y$ in $\E$. If $g$ and $gf$ are cartesian then $f$ is cartesian.
\end{Prop}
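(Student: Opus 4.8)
The plan is to prove this by the same route as the $2$-categorical cancellation result (part (5) of the Proposition following Definition \ref{def:cartesian_1-cell_strict}): pass to the bipullback characterisation of cartesian $1$-cells, and use the cancellation form of the pasting lemma for bipullbacks.

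First, for each $z \in \E$, assemble the diagram of hom-categories
\begin{equation*}
\cd[]{
\E(z,w) \ar[d]_{P_{zw}} \ar[r]^{f_*} & \E(z,x) \ar[d]^{P_{zx}} \ar[r]^{g_*} & \E(z,y) \ar[d]^{P_{zy}} \\
\B(Pz,Pw) \ar[r]_{Pf_*} & \B(Pz,Px) \ar[r]_{Pg_*} & \B(Pz,Py)
}
\end{equation*}
exactly as in the proof of Proposition \ref{prop:composition2}: the left and right squares are filled by the coherence constraints of $P$ on composition with $f$ and with $g$, and the paste of these two iso-squares together with the relevant associativity constraints is the coherence constraint of $P$ on composition with $gf$, so the outer rectangle is the bipullback-square associated to $gf$. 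Since $g$ is cartesian the right square is a bipullback, and since $gf$ is cartesian the outer rectangle is a bipullback. It then suffices to conclude that the left square is a bipullback, for then $f$ is cartesian.

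This last step is the cancellation form of the bipullback pasting lemma: \emph{if the right square and the composite rectangle of a horizontally composable pair of iso-squares are bipullbacks, then the left square is a bipullback}. I would deduce it by working with comparison functors into iso-comma objects (pseudo-pullbacks). Write $L$, $R$, $O$ for the iso-comma categories of the left, right and outer cospans, and $K_L \colon \E(z,w) \to L$, $K_R \colon \E(z,x) \to R$, $K_O \colon \E(z,w) \to O$ for the induced comparison functors; then $g$ cartesian says $K_R$ is an equivalence, $gf$ cartesian says $K_O$ is an equivalence, and we want $K_L$ to be an equivalence. Using that $K_R$ is an equivalence one replaces $\E(z,x)$ by $R$ inside $L$ and then deletes the now-redundant $\B(Pz,Px)$-coordinate via its defining isomorphism, producing an equivalence $L \simeq O$; and the composite of $K_L$ with this equivalence is isomorphic to $K_O$, precisely because the outer coherence iso is the paste of the two square isos (the identification already carried out for Proposition \ref{prop:composition2}). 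Since $K_O$ is an equivalence, two-out-of-three for equivalences forces $K_L$ to be an equivalence, so the left square is a bipullback.

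The main obstacle is this cancellation lemma. Because bipullbacks are only \emph{weakly} universal — unlike strict pullbacks — it is not a one-line diagram chase: one must track the coherence isomorphisms carefully through the equivalence $L \simeq O$ and check that $K_O$ is recovered up to isomorphism, which is exactly where the identification of the outer constraint with the paste of the inner constraints (Proposition \ref{prop:composition2}) does the real work. Alternatively one may simply invoke the corresponding cancellation statement for bipullbacks from \cite{joyal1993}; everything else is a repackaging of the proof of Proposition \ref{prop:composition2}.
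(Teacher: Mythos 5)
Your proposal is correct and follows essentially the same route as the paper: reuse the pasted diagram from Proposition \ref{prop:composition2} and apply the cancellation form of the pasting lemma for bipullbacks to the right square and outer rectangle. The paper simply asserts that bipullbacks have the same cancellation property as pullbacks, whereas you additionally sketch a proof of that lemma via comparison functors into iso-comma categories; this is a welcome elaboration but not a different argument.
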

\begin{proof}
This proof is essentially the same as Proposition \ref{prop:composition2}. The only other thing we need to know is that bipullbacks have the same cancellation property as pullbacks.
\end{proof}

\begin{Prop} 
If $f \colon x \to y$ is an equivalence then it is cartesian.
\end{Prop}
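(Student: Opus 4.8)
The plan is to apply the bipullback characterisation of cartesian 1-cells established just above: $f$ is cartesian if and only if, for every $z$, the square with legs $f_* \colon \E(z,x) \to \E(z,y)$, $Pf_* \colon \B(Pz,Px) \to \B(Pz,Py)$ and the two local functors of $P$ --- commuting up to the coherence isomorphism of $P$ on composition --- is a bipullback in $\Cat$. I would prove this by showing that both horizontal legs $f_*$ and $Pf_*$ are equivalences of categories, and that any such square with parallel legs equivalences is automatically a bipullback.

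First I would note that, since $f$ is an equivalence in $\E$, post-composition $f_* \colon \E(z,x) \to \E(z,y)$ is an equivalence of categories: a pseudo-inverse $g$ to $f$ induces, by whiskering the unit and counit with the associativity and unit constraints of $\E$, a pseudo-inverse $g_*$ to $f_*$. Next, a homomorphism of bicategories preserves equivalences, so $Pf$ is an equivalence in $\B$, and hence $Pf_* \colon \B(Pz,Px) \to \B(Pz,Py)$ is an equivalence of categories for the same reason.

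Now consider the pseudo-pullback (iso-comma-category) of the cospan $\B(Pz,Px) \xrightarrow{Pf_*} \B(Pz,Py) \xleftarrow{P_{zy}} \E(z,y)$. Because $Pf_*$ is an equivalence, the projection of this pseudo-pullback onto $\E(z,y)$ is an equivalence \cite{joyal1993}; and the canonical comparison functor from $\E(z,x)$ into the pseudo-pullback --- whose coherence datum is precisely the coherence isomorphism of $P$ on composition --- composes with that projection to give exactly $f_*$. Since $f_*$ is an equivalence, two-out-of-three for equivalences forces the comparison functor to be an equivalence, i.e.\ the square is a bipullback. As $z$ was arbitrary, $f$ is cartesian. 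The only step that is not purely formal is the passage ``parallel legs equivalences implies bipullback'', and that rests only on the pseudo-pullback of an equivalence being an equivalence together with two-out-of-three, so I anticipate no real obstacle.
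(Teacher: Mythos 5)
Your proof is correct, but it takes a genuinely different route from the paper's. The paper argues directly from Definition \ref{def:cartesian1_weak}: it fixes an adjoint equivalence $(f,f^\centerdot,\eta,\epsilon)$ and writes down the lift of $(h,\alpha)$ explicitly as $\hat h = f^\centerdot g$ with $\hat\alpha$ and $\hat\beta$ given by concrete composites of constraint cells, then exhibits the unique lifted 2-cell $\hat\sigma$ as an explicit pasting. You instead work entirely through the bipullback characterisation: $f_*$ is an equivalence because $f$ is, $Pf_*$ is an equivalence because homomorphisms preserve equivalences, and then an iso-square whose two parallel legs are equivalences is automatically a bipullback, since the projection of the pseudo-pullback along an equivalence is an equivalence and the comparison functor is then forced to be one by two-out-of-three. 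Both steps you flag are standard and the composite $\pi\circ K = f_*$ does hold on the nose, so the argument goes through. What your approach buys is brevity and uniformity with the paper's own treatment of composition and cancellation of cartesian 1-cells (Propositions \ref{prop:composition2} and \ref{prop:left-2-out-of-3_weak}), which are likewise proved by manipulating bipullback squares; what the paper's approach buys is explicit formulas for the lifted 1- and 2-cells, and independence from the (unproved in the text) bipullback reformulation and from the auxiliary fact about pseudo-pullbacks of equivalences.
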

\begin{proof}
Suppose that $f$ is part of an adjoint equivalence $(f,f^\centerdot,\eta,\epsilon)$ and
\begin{equation*}
\cd[@C+4pt@R+4pt]{
z \ar[dr]^{g} & \\
x \ar[r]_{f}  & y
}
\hspace{1cm}\text{over}\hspace{1cm}
\cd[@C+4pt@R+4pt]{
Pz \ar[dr]^{Pg} \ar[d]_{h}& \\
Px \ar[r]_{Pf} \urtwocell[0.3]{ur}{\alpha} & Py
}.
\end{equation*}
Let $\hat{h} = f^\centerdot g$ and $\hat{\alpha}$ be the composite $f.(f^\centerdot .g) \cong (f.f^\centerdot ).g \cong 1.g \cong g$. Let $\hat{\beta} \colon P\hat{h} \To h$ be the composite $P(f^\centerdot .g) \cong Pf^\centerdot .Pg \cong Pf^\centerdot .(Pf.h) \cong (Pf^\centerdot .Pf).h \cong P(f^\centerdot .f).h \cong P1.h \cong 1.h \cong h$. This is a lift of $(h,\alpha)$.

To check the 2-cell property suppose that $\gamma \colon g \To g'$ and
\begin{equation*}
\cd[@C+1cm@R+1cm]{
Pz \ar[dr]^{Pg'} \ar@/_10pt/[d]_(0.7){h} \ar@/^10pt/[d]^(0.7){h'} \rtwocell{d}{\sigma} & \\
Px \ar[r]_{Pf} \urtwocell[0.4]{ur}{\alpha'} & Py
}
\hspace{1cm}\text{equals}\hspace{1cm}
\cd[@C+1cm@R+1cm]{
Pz \ar@/_10pt/[dr]_(0.7){Pg} \ar@/^10pt/[dr]^(0.7){Pg'} \urtwocell{dr}{P\gamma}  \ar[d]_{h}& \\
Px \ar[r]_{Pf} \urtwocell[0.25]{ur}{\alpha} & Py
}.
\end{equation*}
Then suppose that there are lifts $(\hat{h},\hat{\alpha}, \beta)$, $(\hat{h'},\hat{\alpha'}, \beta')$ of $(h,\alpha)$ and $(h',\beta)$ as above. Let $\hat{\sigma}$ be the composite
\begin{equation*}
\cd[@R-0.2cm] {
h  \ar@{=>}[r]^{l} & 1.h  \ar@{=>}[r]^{\eta.h} & (f^\centerdot.f).h  \ar@{=>}[r]^{a} & f^\centerdot.(f.h)  \ar@{=>}[r]^{f^\centerdot.\hat{\alpha}} & f^\centerdot.g \ar@{=>}[d]^{f^\centerdot.\gamma} \\
h' \ar@{<=}[r]_{l} & 1.h' \ar@{<=}[r]_{\eta^{-1}.h'} & (f^\centerdot.f).h' \ar@{<=}[r]_{a} & f^\centerdot.(f.h') \ar@{<=}[r]_{f^\centerdot.\hat{\alpha'}} & f^\centerdot.g'
}
\hspace{0.2cm}.
\end{equation*}
It is unique with the property that  $\hat{\alpha'}.f\hat{\sigma} = \gamma.\hat{\alpha}$ and $\sigma.\hat{\beta} = \hat{\beta'}.P\hat{\sigma}$.
\end{proof}

\begin{Prop} 
If $f \colon x \to y$ is cartesian and $Pf$ is an equivalence then $f$ is an equivalence.
\end{Prop}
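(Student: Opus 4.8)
\textit{Proof proposal.} The plan is to avoid working with the lifting property directly and instead pass through the local characterisation of cartesian 1-cells, reducing everything to an elementary fact about bipullbacks of categories.

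First I would record that, since $Pf$ is an equivalence in $\B$, post-composition with it is an equivalence of categories $Pf_*\colon\B(Pz,Px)\to\B(Pz,Py)$ for every $z$: a pseudo-inverse $k$ of $Pf$, together with the associativity coherences of $\B$, induces a pseudo-inverse $k_*$ of $Pf_*$. Next, the hypothesis that $f$ is cartesian says exactly that the square with top edge $f_*\colon\E(z,x)\to\E(z,y)$, bottom edge $Pf_*$ and vertical legs $P_{zx}$, $P_{zy}$ is a bipullback for every $z$; in this square $f_*$ is exhibited as the bipullback of $Pf_*$ along $P_{zy}$.

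I would then invoke the basic stability property of bipullbacks — the analogue of ``the pullback of an isomorphism is an isomorphism'' — namely that the bipullback of an equivalence along an arbitrary 1-cell is again an equivalence. (This is immediate from the iso-comma description of the pseudo-pullback: if $v$ is an equivalence then the projection away from $v$ is essentially surjective and fully faithful, and the bipullback is equivalent to this pseudo-pullback over the relevant leg; one must only remember that a bipullback is merely \emph{equivalent} to the pseudo-pullback, so the conclusion is that $f_*$ is an equivalence up to composition with equivalences, which suffices.) Hence $f_*\colon\E(z,x)\to\E(z,y)$ is an equivalence of categories for every $z$.

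Finally I would upgrade ``$f_*$ is an equivalence for all $z$'' to ``$f$ is an equivalence'' by a short representability argument using only $z=y$ and $z=x$. Essential surjectivity of $f_*\colon\E(y,x)\to\E(y,y)$ produces $g\colon y\to x$ with $fg\cong 1_y$. Then $f(gf)\cong(fg)f\cong 1_y f\cong f\cong f1_x$ in $\E(x,y)$, i.e.\ this is an isomorphism between $f_*(gf)$ and $f_*(1_x)$ for $f_*\colon\E(x,x)\to\E(x,y)$; since that $f_*$ is fully faithful it reflects isomorphisms, so there is an isomorphism $gf\cong 1_x$. Thus $g$ is a pseudo-inverse of $f$ and $f$ is an equivalence. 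The only step deserving real care is the claim that a bipullback of an equivalence is an equivalence — in particular the bipullback-versus-pseudo-pullback discrepancy; everything else is formal. This also parallels, one categorical level up, item (3) of the corresponding strict Proposition.
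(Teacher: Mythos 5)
Your proof is correct, but it takes a genuinely different route from the paper's. The paper works directly with the lifting property of Definition \ref{def:cartesian1_weak}: it takes an adjoint equivalence $(Pf,Pf^\centerdot,\eta,\epsilon)$, lifts the counit $\epsilon$ along $f$ to produce $h$ with $f.h\cong 1_y$, then uses the auxiliary facts that equivalences are cartesian, that cartesianness is invariant under isomorphism, and the left-cancellation property (Proposition \ref{prop:left-2-out-of-3_weak}) to conclude that $h$ is itself cartesian, so that a second lift (of $Ph.Pf\cong 1\cong P1$) produces $k$ with $h.k\cong 1$; a short whiskering computation then identifies $k\cong f$ and closes the loop. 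You instead pass through the representable characterisation: cartesianness of $f$ says each square over $f_*$ and $Pf_*$ is a bipullback, $Pf_*$ is an equivalence because $Pf$ is, the bipullback of an equivalence along anything is an equivalence (checked on the iso-comma and transported across the comparison equivalence), so each $f_*$ is an equivalence of hom-categories, and the standard two-step representability argument (essential surjectivity at $z=y$ to get $g$ with $fg\cong 1_y$, full faithfulness at $z=x$ to reflect $f(gf)\cong f1_x$ to $gf\cong 1_x$) finishes. Your key lemma --- stability of equivalences under bipullback --- is not proved in the paper, but it is exactly parallel to the bipullback facts the paper does invoke without proof (pasting with isomorphisms, cancellation), so it is a legitimate ingredient at this point in the development. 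What each approach buys: the paper's argument constructs the pseudo-inverse explicitly as a cartesian lift, which is in keeping with how the rest of Section 3 manipulates lifts; yours is shorter, pushes all the bicategorical bookkeeping into a one-dimensional statement about $\Cat$, and makes transparent the analogy with item (3) of the strict Proposition, at the cost of relying on the bipullback characterisation rather than the lifting property itself.
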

\begin{proof}
Suppose $Pf$ is part of an adjoint equivalence $(Pf,Pf^\centerdot,\eta,\epsilon)$. Then we can lift $\epsilon$ to obtain $\hat{\epsilon}\colon f.h \cong 1$. 
\begin{equation*}
\cd[@C+8pt@R+8pt]{
y \ar[dr]^{1} \ar@{.>}[d]_{h}& \\
x \ar[r]_{f} \urtwocell[0.3]{ur}{\hat{\epsilon}} & y
}
\hspace{1cm}
\cd[@C+8pt@R+8pt]{
Py \ar@/_8pt/[dr]_(0.6){1} \ar@/^8pt/[dr]^(0.6){P1}  \twocong{dr} \ar[d]_{(Pf)^\centerdot}& \\
Px \ar[r]_{Pf} \urtwocell[0.25]{ur}{\epsilon} & Py
}
\end{equation*}
Now since $1_y$ is an equivalence it is cartesian. Then since $f.h \cong 1$, $fh$ is cartesian. Then Proposition \ref{prop:left-2-out-of-3_weak} tells us that $h$ is cartesian. We can then lift $Ph.Pf \cong (Pf)^\centerdot .Pf \cong 1 \cong P1$ as picture on the right to obtain $\hat{\eta}\colon h.k \cong 1$.
\begin{equation*}
\cd[@C+1cm@R+1cm]{
x \ar[dr]^{1} \ar@{.>}[d]_{k}& \\
y \ar[r]_{h}  \urtwocell[0.3]{ur}{\hat{\eta}} & x
}
\hspace{1cm}
\cd[@C+1cm@R+1cm]{
Px \ar@/_6pt/[dr]_(0.35){1} \ar@/^6pt/[dr]^{P1} \twocong{dr} \ar[d]_{Pf}& \\
Py \pair[6pt]{r}{(Pf)^\centerdot}{Ph} \twocong{r} \urtwocell[0.25]{ur}{\eta} & Px
}
\end{equation*}
Finally, $f \cong f.1 \cong f.(h.k) \cong (f.h.).k \cong 1.k \cong k$ and then $h.f \cong h.k \cong 1$. Thus $f$ is an equivalence.
\end{proof}

\begin{Prop} 
\label{prop:isos_lift_to_isos_weak}
Suppose that $f \colon a \to b$ is cartesian and $\hat{\sigma}$ is the unique lift of $\sigma$ as pictured. If $\sigma$ and $\tau$ are isomorphisms then $\hat{\sigma}$ is an isomorphism.
\begin{equation*}
\cd[@C+36pt@R+18pt]{
z \pair[8pt]{d}{\hat{h}}{\hat{h'}} \dotrtwocell{d}{\hat{\sigma}} \pair[8pt]{dr}{g}{g'} \urtwocell{dr}{\tau} & \\
x \ar[r]_{f} & y
}
\hspace{12pt}\text{over}\hspace{12pt}
\cd[@C+36pt@R+18pt]{
Pz \pair[8pt]{d}{{h}}{{h'}} \rtwocell{d}{\sigma} \pair[8pt]{dr}{Pg}{Pg'} \urtwocell{dr}{P\tau} & \\
Px \ar[r]_{Pf} & Py
}
\end{equation*}
The isomorphisms on the front and back are omitted in the diagram.
\end{Prop}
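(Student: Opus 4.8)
The plan is to produce an explicit inverse for $\hat\sigma$ by lifting $\sigma^{-1}$ and then to show that the two composites are identities using the uniqueness clause of Definition \ref{def:cartesian1_weak}(2). Write $\hat\alpha\colon f\hat h\To g$, $\hat\alpha'\colon f\hat h'\To g'$, $\hat\beta\colon P\hat h\To h$, $\hat\beta'\colon P\hat h'\To h'$ for the coherence data of the two $1$-cell lifts that are suppressed in the diagram, and $\alpha,\alpha'$ for the corresponding isomorphisms in $\B$. Throughout I use that whiskering by $f$, i.e. the strict functor $f_*\colon\E(z,x)\to\E(z,y)$ with $g\mapsto fg$, and the local functor $P$ are strict, so they preserve vertical composites, identity $2$-cells and inverses.

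First I would check that $\sigma^{-1}$ can be lifted. The equation that produced $\hat\sigma$ is the hypothesis $\alpha'.Pf\sigma=P\tau.\alpha$ of Definition \ref{def:cartesian1_weak}(2); rearranging it, and using $\tau^{-1}.\tau=1$ and $\sigma.\sigma^{-1}=1$, gives $\alpha.Pf(\sigma^{-1})=P(\tau^{-1}).\alpha'$. This is precisely the compatibility needed to apply Definition \ref{def:cartesian1_weak}(2) to $\sigma^{-1}$, taking $\tau^{-1}\colon g'\To g$ as the $2$-cell upstairs and using the lift $(\hat h',\hat\alpha',\hat\beta')$ as source and $(\hat h,\hat\alpha,\hat\beta)$ as target. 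It yields a unique $\widehat{\sigma^{-1}}\colon\hat h'\To\hat h$ with $\hat\alpha.f\widehat{\sigma^{-1}}=\tau^{-1}.\hat\alpha'$ and $\sigma^{-1}.\hat\beta'=\hat\beta.P\widehat{\sigma^{-1}}$.

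Next I would note that $1_{\hat h}$ is the unique lift of $1_h$ relative to $1_g$ (from $(\hat h,\hat\alpha,\hat\beta)$ to itself), since it satisfies the defining equations $\hat\alpha.f(1_{\hat h})=\hat\alpha$ and $\hat\beta.P(1_{\hat h})=\hat\beta$. Then a short computation shows that $\widehat{\sigma^{-1}}.\hat\sigma$ satisfies the same two equations: by functoriality of $f_*$ and the defining equations of $\hat\sigma$ and $\widehat{\sigma^{-1}}$, $\hat\alpha.f(\widehat{\sigma^{-1}}.\hat\sigma)=(\hat\alpha.f\widehat{\sigma^{-1}}).f\hat\sigma=\tau^{-1}.(\hat\alpha'.f\hat\sigma)=\tau^{-1}.\tau.\hat\alpha=\hat\alpha$, and likewise, using local functoriality of $P$, $\hat\beta.P(\widehat{\sigma^{-1}}.\hat\sigma)=\sigma^{-1}.(\hat\beta'.P\hat\sigma)=\sigma^{-1}.\sigma.\hat\beta=\hat\beta$. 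By the uniqueness clause of Definition \ref{def:cartesian1_weak}(2), $\widehat{\sigma^{-1}}.\hat\sigma=1_{\hat h}$, and the mirror-image computation (with $h',g',\hat\alpha',\hat\beta'$ in place of $h,g,\hat\alpha,\hat\beta$) gives $\hat\sigma.\widehat{\sigma^{-1}}=1_{\hat h'}$. Hence $\hat\sigma$ is invertible with inverse $\widehat{\sigma^{-1}}$.

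I expect the only real difficulty to be bookkeeping: there are four suppressed coherence isomorphisms together with the ambient associativity and unit constraints of $\E$ and $\B$, and one must keep track of them when verifying the defining equations for the two composites; strictness of whiskering and of the local functors $P_{zx},P_{zy}$ is exactly what keeps these manipulations from proliferating. As an alternative one could appeal to the bipullback characterisation of cartesian $1$-cells: $\hat\sigma$ is carried by $f_*$ to $(\hat\alpha')^{-1}.\tau.\hat\alpha$ and by $P$ to $(\hat\beta')^{-1}.\sigma.\hat\beta$, both invertible, and a morphism of a bipullback of categories is invertible as soon as both its legs are; but this route requires unwinding the comparison equivalence, so the direct argument above seems cleaner.
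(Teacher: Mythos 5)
Your proof is correct and follows essentially the same route as the paper: lift $\sigma^{-1}$ (with $\tau^{-1}$) to get a candidate inverse, observe that both composites and the relevant identity 2-cells are lifts of $1_h$ (resp.\ $1_{h'}$), and conclude by the uniqueness clause of Definition \ref{def:cartesian1_weak}(2). Your write-up is more explicit about verifying the rearranged compatibility condition and the defining equations, but the argument is the same.
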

\begin{proof}
If $\sigma$ and $\tau$ are both invertible then the above lifting can be done with $\sigma^{-1}$ and $\tau^{-1}$. This gives a map $\hat{\sigma}^\centerdot\colon \hat{h} \To \hat{h'}$. If we paste these diagrams together then $\hat{\sigma} \hat{\sigma}^\centerdot$ is a lift of $\sigma \sigma^{-1} = 1_h$. However $1_{\hat{h}}$ is also a lift of $1_h$ and thus by uniqueness $\hat{\sigma} \hat{\sigma}^\centerdot = 1_{\hat{h}}$. Pasting the diagrams together the other way gives $\hat{\sigma}^\centerdot \hat{\sigma} = 1_{\hat{h'}}$.
\end{proof}

\begin{Cor}\label{cor:unique_up_to_iso}
The lifts of Definition \ref{def:cartesian1_weak} (1) are unique up to a unique invertible 2-cell.
\end{Cor}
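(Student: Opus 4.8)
The plan is to obtain both the existence and the uniqueness of the comparison $2$-cell from the two-dimensional lifting property, Definition~\ref{def:cartesian1_weak}(2), applied to identity data. Suppose $(\hat{h}_1,\hat{\alpha}_1,\hat{\beta}_1)$ and $(\hat{h}_2,\hat{\alpha}_2,\hat{\beta}_2)$ are two lifts of the same pair $(h,\alpha)$ along the cartesian $1$-cell $f\colon x\to y$. First I would invoke Definition~\ref{def:cartesian1_weak}(2) with $\sigma = 1_g$, with $h'=h$ and $\alpha'=\alpha$, with the two required lifts taken to be $(\hat{h}_1,\hat{\alpha}_1,\hat{\beta}_1)$ and $(\hat{h}_2,\hat{\alpha}_2,\hat{\beta}_2)$, and with $\delta = 1_h$. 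The hypothesis $\alpha'.Pf\delta = P\sigma.\alpha$ then reduces to $\alpha = \alpha$ and holds trivially. The conclusion supplies a unique $2$-cell $\hat{\delta}\colon \hat{h}_1 \To \hat{h}_2$ satisfying $\hat{\alpha}_2.f\hat{\delta} = \hat{\alpha}_1$ and $\hat{\beta}_1 = \hat{\beta}_2.P\hat{\delta}$; these are precisely the conditions expressing that $\hat{\delta}$ is a morphism of lifts, and uniqueness among all such $2$-cells is immediate.

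It remains to see that $\hat{\delta}$ is invertible. For this I would appeal to Proposition~\ref{prop:isos_lift_to_isos_weak}: $\hat{\delta}$ is the unique lift of $\delta = 1_h$ taken over the isomorphism $\tau = 1_g$, and since both $1_h$ and $1_g$ are isomorphisms that proposition gives at once that $\hat{\delta}$ is an isomorphism. Alternatively one repeats the short argument of that proposition directly: lifting $1_h$ with the roles of the two lifts exchanged produces a $2$-cell $\hat{\delta}^\centerdot\colon \hat{h}_2\To\hat{h}_1$, and the two pasted composites $\hat{\delta}^\centerdot\hat{\delta}$ and $\hat{\delta}\hat{\delta}^\centerdot$ are lifts of $1_h$, hence equal to $1_{\hat{h}_1}$ and $1_{\hat{h}_2}$ by the uniqueness clause.

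Finally, to justify the word ``unique'' in its strongest sense — that the lifts of a given $(h,\alpha)$ form an essentially trivial groupoid — I would note that these comparison isomorphisms are automatically coherent. The inverse $\hat{\delta}^{-1}$ is itself a morphism of lifts $\hat{h}_2\To\hat{h}_1$ and hence coincides with the comparison produced by applying the construction with the two lifts exchanged; and given a third lift $(\hat{h}_3,\hat{\alpha}_3,\hat{\beta}_3)$, the composite of the comparisons $\hat{h}_1\To\hat{h}_2\To\hat{h}_3$ again satisfies the defining equations of a morphism of lifts and so equals the direct comparison $\hat{h}_1\To\hat{h}_3$. All of this is forced by the uniqueness clause of Definition~\ref{def:cartesian1_weak}(2), so no further calculation is required.

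The only point demanding care — and the closest thing to an obstacle — is the bookkeeping: one must verify that feeding $\delta=1_h$ and $\sigma=1_g$ into Definition~\ref{def:cartesian1_weak}(2) really does reduce the two coherence equations governing $\hat{\delta}$ to the morphism-of-lifts conditions $\hat{\alpha}_2.f\hat{\delta}=\hat{\alpha}_1$ and $\hat{\beta}_1=\hat{\beta}_2.P\hat{\delta}$, keeping track of the direction of each $2$-cell and of the composition coherence $\phi_{hf}$ of $P$ that enters the defining equation $\alpha.Pf\hat{\beta} = P\hat{\alpha}.\phi_{hf}$ of a lift. Once this is set up correctly the statement is purely formal.
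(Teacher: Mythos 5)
Your argument is correct and is essentially the paper's own proof: the paper simply cites Proposition~\ref{prop:isos_lift_to_isos_weak} with $\sigma = 1_h$ and $\tau = 1_g$, which is exactly your instantiation of Definition~\ref{def:cartesian1_weak}(2) followed by the invertibility argument. The additional remarks on coherence of the comparison isomorphisms are a harmless elaboration of the uniqueness clause.
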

\begin{proof}
Use the above result with $\sigma = 1_h$ and $\tau = 1_g$.
\end{proof}

\begin{Prop} 
Suppose $f \colon a \to b$ is cartesian over $Pf \colon Pa \to Pb$. It is unique up to an equivalence 1-cell and isomorphism 2-cell. This equivalence and isomorphism are unique up to an isomorphism 2-cell. \end{Prop}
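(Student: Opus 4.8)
The plan is to realise the comparison equivalence as the cartesian lift of an identity 1-cell, so that both assertions fall out of the two-dimensional part of the cartesian property together with Corollary~\ref{cor:unique_up_to_iso}.

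Concretely, let $f \colon a \to b$ and $f' \colon a' \to b$ be two cartesian 1-cells with $Pf = Pf' = u \colon c \to Pb$ (so that $Pa = Pa' = c$). First I would apply Definition~\ref{def:cartesian1_weak}(1) to the cartesian 1-cell $f'$, taking the outer 1-cell to be $f$ itself, the 1-cell $Pa \to Pa'$ to be $1_c$, and the isomorphism $Pf'.1_c \To Pf$ to be the left unit isomorphism (legitimate since $Pf = Pf'$). This produces a 1-cell $w \colon a \to a'$ together with isomorphisms $\theta \colon f'w \To f$ and $\beta \colon Pw \To 1_c$, compatible in the sense of that definition.

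Next I would check that $w$ is an equivalence. Since $f'w \cong f$ and $f$ is cartesian, $f'w$ is cartesian by Proposition~\ref{weak_1-cells_iso_cartesian}; as $f'$ and $f'w$ are both cartesian, Proposition~\ref{prop:left-2-out-of-3_weak} gives that $w$ is cartesian; and $Pw \cong 1_c$ is an equivalence, so by the proposition that a cartesian 1-cell lying over an equivalence is itself an equivalence, $w$ is an equivalence. Thus $(w,\theta)$ exhibits $f$ and $f'$ as the same cartesian lift up to an equivalence 1-cell and an isomorphism 2-cell, which is the first assertion.

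For the second assertion, note that any comparison datum of the required shape — an equivalence $v \colon a \to a'$ together with an isomorphism $Pv \To 1_c$ and an isomorphism $f'v \To f$ subject to the coherence condition — is exactly a lift of the pair $(1_c,\ \text{unit})$ along the cartesian 1-cell $f'$ in the sense of Definition~\ref{def:cartesian1_weak}(1). By Corollary~\ref{cor:unique_up_to_iso} any two such lifts are joined by a unique invertible 2-cell compatible with the structural isomorphisms, and this says precisely that the equivalence and the comparing isomorphism 2-cell are unique up to a unique isomorphism 2-cell. The only genuine obstacle is coherence bookkeeping: one must verify that the isomorphism $Pw \To 1_c$ returned by the cartesian property, pasted with the composition coherence of $P$ and the unit isomorphisms of $\B$, satisfies the compatibility equation of Definition~\ref{def:cartesian1_weak}, so that Corollary~\ref{cor:unique_up_to_iso} applies on the nose — a direct pasting-diagram computation of the kind that recurs throughout Section~\ref{sec:weak}.
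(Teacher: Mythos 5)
Your proof is correct, and it opens the same way as the paper's: both lift the pair $(1_c,\ \text{unitor})$ along one of the two cartesian 1-cells to produce the comparison 1-cell $w$ with its isomorphisms, and both settle the uniqueness clause by observing that any such comparison datum is exactly a lift in the sense of Definition~\ref{def:cartesian1_weak}(1), so Corollary~\ref{cor:unique_up_to_iso} applies. Where you genuinely diverge is in showing that $w$ is an equivalence. The paper does this by hand: it performs a second lift in the opposite direction along the other cartesian 1-cell, pastes the two lifting triangles together over a commuting shell, and uses the uniqueness of 2-cell lifts (via Proposition~\ref{prop:isos_lift_to_isos_weak}) to identify $\hat h \hat h'$ and $\hat h' \hat h$ with identities up to isomorphism -- in effect constructing an explicit quasi-inverse. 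You instead chain three already-proved propositions: $f'w \cong f$ gives $f'w$ cartesian (Proposition~\ref{weak_1-cells_iso_cartesian}), left-cancellation gives $w$ cartesian (Proposition~\ref{prop:left-2-out-of-3_weak}), and $Pw \cong 1_c$ being an equivalence then forces $w$ to be an equivalence. Your route is shorter and more modular, reusing the lemma stock rather than redoing a pasting argument; the paper's route is self-contained and hands you the explicit inverse 1-cell and the two invertible 2-cells witnessing the equivalence, data that is convenient later when such equivalences are fed into the construction of $\chi_{hf}$ and $\iota_b$ in the pseudo-inverse to the Grothendieck construction. The only blemish is cosmetic: the structural isomorphism $Pf'.1_c \To Pf'$ you need is the right unitor (the paper calls it $r$), not the left one, but this does not affect the argument.
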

\begin{proof}
Suppose that $g\colon c \to b$ is cartesian over $Pf$. Then
\begin{equation*}
\cd[]{
c \ar[dr]^{g} \ar@{.>}[d]_{\hat{h}} & \\
a \ar[r]_{f} \urtwocell[0.35]{ur}{\hat{r}} & b
}
\hspace{12pt}\text{over}\hspace{12pt}
\cd[]{
Pc \ar[dr]^{Pg} \ar[d]_{1} & \\
Pa \ar[r]_{Pf} \urtwocell[0.35]{ur}{r} & Pb
}
\end{equation*}
and $f$ is cartesian so there exists a lift $(\hat{h},\hat{r},\hat{\beta})$. By Corollary \ref{cor:unique_up_to_iso} this lift is unique up to a unique isomorphism. We have yet to show that $\hat{h}$ is an equivalence.

If we draw the same diagram with $g$ in the base then since $g$ is cartesian there exists a lift $(\hat{h}',\hat{l}',\hat{\beta}')$ of $r\colon Pg.1 \To Pf$. 
These two lifts can be pasted together to form a lift pictured on the left. 
The base forms a commuting shell by coherence in a bicategory. 
Then there exists a unique lift of $r \colon 1.1 \To 1$. It is a 2-cell $\hat{h} \hat{h}' \To 1$ and it is an isomorphism by Proposition \ref{prop:isos_lift_to_isos_weak}. 
\begin{equation*}
\cd[@C+30pt@R+15pt]{
x \ar[ddr]_{f} \ar@/_10pt/[dd]_{1} \ar[d]^{\hat{h}'} \ar@/^12pt/[ddr]^{f}_{=} & \\
z \ar[dr]_(0.65){g} \ar[d]^{\hat{h}} \urtwocell[0.25]{ur}{\hat{r}'} &  \\
x \ar[r]_{f} \urtwocell[0.3]{ur}{\hat{r}} & y
}
\hspace{12pt}\text{over}\hspace{12pt}
\cd[@C+30pt@R+15pt]{
Px \ar[ddr]_{Pf} \ar@/_10pt/[dd]_{1} \ar[d]^{1} \ar@/^12pt/[ddr]^{Pf}_{=} & \\
Pz \ar[dr]_(0.65){Pg} \ar[d]^{1} \urtwocell[0.25]{ur}{r} &  \\
Px \ar[r]_{Pf} \urtwocell[0.3]{ur}{r} & Py
}
\end{equation*}
The isomorphisms omitted in this diagram are all $r$. If we paste the lifts together the other way and follow the same reasoning we get another ismorphism $\hat{h}' \hat{h} \To 1$. Thus $\hat{h}$ is an equivalence.
\end{proof}

\begin{Prop} \label{chosen_2-cells_enough_weak}
Suppose that $P$, $Q$, $F$ are homomorphisms with with $P=QF$.
\begin{enumerate}
\item If $P$ is locally fibred and chosen cartesian 2-cells are closed under horizontal composition then all cartesian 2-cells are closed under horizontal composition.
\item If $P$ and $Q$ are fibrations and $F$ preserves chosen cartesian maps then $F$ preserves all cartesian maps.
\end{enumerate}
\end{Prop}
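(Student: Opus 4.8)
The plan is to lift, essentially verbatim, the proofs of Propositions~\ref{prop:chosen_2-cells_enough_strict} and~\ref{prop:preserve_chosen_maps_enough_strict} to the bicategorical setting, exploiting the fact that a cartesian $2$-cell is an honest cartesian morphism of a $1$-category (the fibration $P_{xy}$), so only the $1$-cell half of part~(2) involves genuine weakness. Fix a cleavage for each $P_{xy}$ (these are fibrations by local fibredness), and recall the standard fact that inside a fibration two cartesian lifts of the same morphism with the same codomain differ by a unique vertical isomorphism.

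For part~(1), take cartesian $2$-cells $\alpha\colon f\To f'\colon x\to y$ and $\beta\colon g\To g'\colon y\to z$; I want $\beta*\alpha$ cartesian. Since $\alpha$ and the chosen lift $\c{P\alpha}{f'}$ are both cartesian lifts of $P\alpha$ with codomain $f'$ in $P_{xy}$, there is a vertical isomorphism $\sigma$ with $\alpha=\c{P\alpha}{f'}\circ\sigma$, and likewise $\beta=\c{P\beta}{g'}\circ\tau$ for a vertical isomorphism $\tau$. The middle-four interchange law then gives
\begin{equation*}
\beta*\alpha=\bigl(\c{P\beta}{g'}\circ\tau\bigr)*\bigl(\c{P\alpha}{f'}\circ\sigma\bigr)=\bigl(\c{P\beta}{g'}*\c{P\alpha}{f'}\bigr)\circ\bigl(\tau*\sigma\bigr).
\end{equation*}
By hypothesis the first factor is cartesian; $\tau*\sigma$ is invertible (horizontal composition of $2$-cells is a functor), hence cartesian for $P_{xz}$; and cartesian $2$-cells are closed under vertical composition (local fibredness). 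So $\beta*\alpha$ is cartesian.

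For part~(2), I handle $2$-cells and $1$-cells separately. If $\alpha\colon f\To g$ is a cartesian $2$-cell of $\E$, then as above $\alpha=\c{P\alpha}{g}\circ\sigma$ with $\sigma$ a vertical isomorphism; applying $F$ (which is strictly functorial on each hom-category) gives $F\alpha=F\c{P\alpha}{g}\circ F\sigma$, where $F\c{P\alpha}{g}=\c{P\alpha}{Fg}$ is a chosen cartesian $2$-cell for $Q$ (hypothesis on $F$) and $F\sigma$ is invertible ($F$ preserves invertibility), so $F\alpha$ is cartesian. If $f\colon x\to y$ is a cartesian $1$-cell of $\E$, then by the uniqueness-up-to-equivalence of cartesian lifts over $Pf$ there is an equivalence $e$ and an isomorphism $f\cong\c{Pf}{y}\cdot e$; pseudofunctoriality of $F$ then yields $Ff\cong F(\c{Pf}{y}\cdot e)\cong F\c{Pf}{y}\cdot Fe=\c{Pf}{Fy}\cdot Fe$. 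Here $\c{Pf}{Fy}$ is cartesian and $Fe$, being an equivalence, is cartesian, so the composite is cartesian by Proposition~\ref{prop:composition2}, whence $Ff$ is cartesian by Proposition~\ref{weak_1-cells_iso_cartesian}.

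The only point requiring care is the comparison step: that an arbitrary cartesian $2$-cell factors as (chosen cartesian)~$\circ$~(vertical iso), and a cartesian $1$-cell as (chosen cartesian)~$\circ$~(equivalence). The first is the classical ``cartesian lifts are unique up to vertical isomorphism'' applied inside $P_{xy}$; the second is the bicategorical analogue established earlier in this subsection. Everything else is interchange-law bookkeeping together with the composition and isomorphism-invariance results for cartesian maps, so I do not expect any new obstacle.
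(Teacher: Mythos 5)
Your proof is correct and takes the same route as the paper, which simply defers to the strict-case arguments of Propositions~\ref{prop:chosen_2-cells_enough_strict} and~\ref{prop:preserve_chosen_maps_enough_strict}; you have fleshed out exactly the intended details, including the one genuinely bicategorical step (factoring a cartesian 1-cell through the chosen lift via an equivalence and an isomorphism, then invoking Propositions~\ref{prop:composition2} and~\ref{weak_1-cells_iso_cartesian}). No gaps.
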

\begin{proof}
The proofs are essentially the same as for Proposition \ref{prop:chosen_2-cells_enough_strict} and \ref{prop:preserve_chosen_maps_enough_strict}.
\end{proof}

\subsection{Fibrations with stricter properties}

Every fibration is locally fibred and thus locally has the iso-lifting property. We can take advantage of this to make fibrations much easier to handle.

\begin{Prop} 
\label{prop:nicer_lifting}
When $P$ is locally fibred every lift $(\hat{h},\hat{\alpha},\hat{\beta})$ of $(h, \alpha)$ along a cartesian 1-cell can be chosen so that $\hat{\beta} = 1_{h}$. That is, lifts along cartesian 1-cells can be chosen so $P\hat{h}=h$.
\end{Prop}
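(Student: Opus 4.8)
The plan is to exploit the fact that $P$ being locally fibred makes each hom-fibration $P_{zx}\colon\E(z,x)\to\B(Pz,Px)$ enjoy the iso-lifting property, and to use this to correct an arbitrary lift to one that lies strictly over $h$.

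Concretely, suppose $f\colon x\to y$ is cartesian and $(\hat h,\hat\alpha,\hat\beta)$ is a lift of $(h,\alpha)$ in the sense of Definition \ref{def:cartesian1_weak}(1), so that $\hat\beta\colon P\hat h\To h$ is an isomorphism. Viewing $\hat h$ as an object of the category $\E(z,x)$ and $\hat\beta^{-1}\colon h\To P\hat h$ as a morphism of $\B(Pz,Px)$, the fibration $P_{zx}$ supplies a cartesian morphism $\kappa\colon h'\To\hat h$ of $\E(z,x)$ with $P\kappa=\hat\beta^{-1}$ and $Ph'=h$; since $\hat\beta^{-1}$ is invertible and $\kappa$ is cartesian, $\kappa$ is itself invertible (the standard fact that a cartesian arrow over an isomorphism is an isomorphism, applied inside $\E(z,x)$). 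I would then replace the original lift by the triple $(h',\;\hat\alpha\cdot(f\kappa),\;1_h)$. Its second component $\hat\alpha\cdot(f\kappa)\colon fh'\To g$ is a composite of isomorphisms, hence an isomorphism, and its third component is $1_h$, which is exactly the assertion $P\hat h=h$.

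It then remains to check that $(h',\hat\alpha\cdot(f\kappa),1_h)$ satisfies the coherence equation of Definition \ref{def:cartesian1_weak}(1). Writing $P(\hat\alpha\cdot(f\kappa))=P\hat\alpha\cdot P(f\kappa)$ (a homomorphism preserves vertical composition strictly), using naturality of the compositor $\phi$ of $P$ to rewrite $P(f\kappa)\cdot\phi_{h'f}$ as $\phi_{hf}\cdot(Pf\hat\beta^{-1})$, and then substituting the coherence equation already satisfied by $(\hat h,\hat\alpha,\hat\beta)$, the required identity collapses to $\hat\beta\cdot\hat\beta^{-1}=1_h$. The only genuine obstacle I anticipate is this last piece of bookkeeping: one must track the directions of the structure constraints and verify that the naturality square for $\phi$ is precisely what absorbs the two copies of $\hat\beta$. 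No idea beyond iso-lifting is needed, and since a cartesian $f$ in particular admits some lift of $(h,\alpha)$, this shows a lift with $\hat\beta=1_h$ always exists.
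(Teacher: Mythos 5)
Your proof is correct and is essentially the paper's own argument: the paper likewise takes the cartesian lift of the isomorphism relating $h$ and $P\hat h$ in the hom-fibration $P_{zx}$ (an isomorphism since cartesian over an isomorphism), replaces $\hat h$ by its domain $h'$, sets the new 2-cell to $\hat\alpha.f\kappa$, and verifies the coherence equation via naturality of $\phi$ exactly as you describe. If anything your write-up is slightly more careful about the direction of the lifted 2-cell ($\hat\beta^{-1}$ lifted at $\hat h$) than the paper's statement of it.
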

\begin{proof}
Suppose that $f$ is cartesian and $(\hat{h},\hat{\alpha},\hat{\beta})$ is a lift of $(h, \alpha)$. That is,
\begin{equation*}
\cd[@C+15pt@R+15pt]{
z \ar[dr]^{g} \ar[d]_{\hat{h}} & \\
x \ar[r]_{f} \urtwocell[0.3]{ur}{\hat{\alpha}} & y
}
\hspace{1cm}
\cd[@C+15pt@R+15pt]{
Pz \ar[d]_{h}   \ar[dr]^{Pg}& \\
Px \ar[r]_{Pf} \urtwocell[0.3]{ur}{\alpha} & Py
}
\end{equation*}
where $\alpha.Pf\hat{\beta} = P\hat{\alpha}.\phi_{hf}$. Let $\sigma\colon h^!\To h$ be the cartesian lift of $\hat{\beta}$ at $h$ and let $\alpha^!=\hat{\alpha}.f\sigma$ ($\sigma$ is an isomorphism because it is a cartesian 2-cell over an isomorphism). Then $(h^!,\alpha^!, 1_{h})$ is a lift of $(h, \alpha)$ for $f$. This is proved by $P\alpha^!.\phi_{h^!f} = P\hat{\alpha}.P(f\sigma).\phi_{h^!f} = P\hat{\alpha}.\phi_{hf}.PfP\sigma = \alpha$.
\end{proof}

\begin{Prop} 
Every locally fibred homomorphism $P\colon\E\to\B$ is isomorphic to a locally fibred $P'\colon\E'\to\B$ via a homomorphism $S$ that preserves identities and composition. If $P$ is a fibration then $P'$ is a fibration and $S$ is cartesian.
\begin{equation*}
\cd[@C-10pt]{
\E \ar[dr]_{P} \ar[rr]^{S} & & \E' \ar[dl]^{P'} \\
 & \B &
}
\end{equation*}
\end{Prop}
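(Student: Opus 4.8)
The plan is to keep the $0$-cells of $\E$ but to replace each hom-category by a pseudo-pullback of $P_{xy}$ against the identity, so that a $1$-cell of $\E'$ records an honest $1$-cell of $\E$ together with a \emph{name} for it in $\B$ and a comparison isomorphism; by letting composition in $\E'$ forget the name, the evident inclusion $S$ becomes strict. Concretely: $\ob\E'=\ob\E$; a $1$-cell $x\to y$ in $\E'$ is a triple $(v,\bar v,\psi)$ with $v\colon Px\to Py$ in $\B$, $\bar v\colon x\to y$ in $\E$ and $\psi\colon P\bar v\To v$ invertible; a $2$-cell $(v,\bar v,\psi)\To(v',\bar v',\psi')$ is a pair $(\theta,\bar\theta)$ with $\theta\colon v\To v'$, $\bar\theta\colon\bar v\To\bar v'$ and $\psi'\cdot P\bar\theta=\theta\cdot\psi$. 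Composition and identities of $1$-cells use only the $\E$-components, $(v',\bar v',\psi')\circ(v,\bar v,\psi):=(P(\bar v'\bar v),\bar v'\bar v,1)$ and $1^{\E'}_x:=(P1_x,1_x,1)$; composition of $2$-cells likewise uses the $\E$-components, with the $\B$-component forced by applying $P$; and the associativity and unit constraints of $\E'$ are those of $\E$ on the $\E$-component together with their $P$-images. Set $P'\colon\E'\to\B$ to be projection onto the $\B$-components, with compositor and unitor assembled from those of $P$ and the $\psi$'s, and $S\colon\E\to\E'$ to be $x\mapsto x$, $\bar u\mapsto(P\bar u,\bar u,1)$, $\bar\theta\mapsto(P\bar\theta,\bar\theta)$.

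\textbf{The checks.} First, $\E'$ is a bicategory: each instance of the pentagon and triangle reduces slot-wise to the corresponding one in $\E$ --- the $\B$-slot instance being its $P$-image, using that $P$ is a homomorphism --- and the $2$-cell compatibility condition is automatic because only identity $\psi$'s appear in composites. Then $P'$ is a homomorphism whose coherence descends from that of $P$, and $P'S=P$ on the nose. Next, $S$ preserves identities and composition strictly, since $S(1_x)=1^{\E'}_x$ and $S(\bar v)\circ S(\bar u)=(P(\bar v\bar u),\bar v\bar u,1)=S(\bar v\bar u)$, and the same for both compositions of $2$-cells and for the constraints; so $S$ is a strict homomorphism. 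Finally $S$ is an equivalence over $\B$: the projection $(v,\bar v,\psi)\mapsto\bar v$ is a strict homomorphism $R\colon\E'\to\E$ with $RS=1_{\E}$, and $(\psi^{-1},1)$ is the component of an invertible $2$-cell $SR\To 1_{\E'}$, both over $\B$ --- this is the sense in which $P$ is isomorphic to $P'$.

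\textbf{Fibration clauses.} The functor $P'_{xy}$ is the projection of the iso-comma category $(\B(Px,Py)\downarrow_{\cong}P_{xy})$ onto $\B(Px,Py)$, and since $P_{xy}$ is a fibration this projection is one too (a cartesian lift of $\theta$ at $(v,\bar v,\psi)$ is built from the $P_{xy}$-cartesian lift of $\psi^{-1}\theta$); hence $P'$ is locally fibred. If $P$ is moreover a fibration then, given $e\in\E'$ and $f\colon b\to P'e$, Proposition~\ref{prop:nicer_lifting} provides a cartesian $\bar h\colon a\to e$ in $\E$ with $P\bar h=f$, and $S\bar h=(f,\bar h,1)$ is a cartesian lift of $f$ in $\E'$: indeed $S$, being an equivalence over $\B$, transports the defining hom-square bipullbacks of the previous subsection, so it preserves and reflects cartesian $1$- and $2$-cells; closure of cartesian $2$-cells under horizontal composition transports across $S\dashv R$ the same way, and this also shows $S$ is cartesian. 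If needed, Proposition~\ref{chosen_2-cells_enough_weak} reduces these to statements about chosen cartesian cells.

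\textbf{Main obstacle.} Nothing here is conceptually in doubt, but the work is all in the bicategorical bookkeeping --- the axioms for $\E'$ and the (strict, respectively weak) coherence of $S$ and $P'$, i.e.\ a sizeable pile of pasting diagrams. The one step that uses an idea rather than bookkeeping is that $S$ transports cartesianness; the right input for that is the invariance of bipullbacks under equivalence in the sense of Joyal and Street \cite{joyal1993}, applied to the hom-squares, so that ``cartesian'' can be detected through $S$.
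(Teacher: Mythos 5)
Your construction proves a different statement from the one the paper needs, and even on its own terms it falls short of the claim. First, the modifier ``preserves identities and composition'' is intended for $P'$, not for $S$ (the sentence is grammatically ambiguous, but the remark immediately following the proposition --- ``From this point on we will suppose that all fibrations preserve composition and identities\dots'' --- settles it): the whole point is to replace $P$ by a fibration $P'$ with $P'(g\circ f)=P'g.P'f$ and $P'(\hat{1}_e)=1_{P'e}$ on the nose. In your $\E'$ the projection $P'$ onto the $\B$-components is exactly as weak as $P$ --- its compositor has to be assembled from $\phi$ and the $\psi$'s --- so nothing has been strictified where it matters; you have instead strictified $S$, which the paper does not require (the paper's $S$ has nontrivial compositors). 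Second, the proposition asserts that $P$ is \emph{isomorphic} to $P'$, and your $S$ is only a biequivalence: it is far from bijective on $1$-cells (every $\bar v$ underlies many triples $(v,\bar v,\psi)$), and while $RS=1_{\E}$ you only obtain $SR\cong 1_{\E'}$.

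The paper's route is the opposite strictification: keep exactly the cells of $\E$, let $S$ be the identity on $0$-, $1$- and $2$-cells, and redefine horizontal composition in $\E'$ by taking $g\circ f$ to be the domain of the cartesian lift of $\phi_{gf}\colon Pg.Pf\To P(gf)$ at $gf$ in the local fibration $P_{e,e''}$ (and $\hat{1}_e$ similarly from $\phi_e$), with all constraint cells of $\E'$ and the compositors of $S$ obtained as the unique induced $2$-cells over the base. This forces $P'(g\circ f)=P'g.P'f$ strictly, makes $S$ an honest isomorphism of bicategories, and is where the local iso-lifting hypothesis is actually consumed; in your argument local fibredness only reappears to re-verify that $P'$ is locally fibred, which in your setup is essentially automatic since the fibers are unchanged up to equivalence.
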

\begin{proof}
Suppose $P\colon \E \to \B$ is locally fibred.
Let $\E'$ have the same 0-, 1-, and 2-cells as $\E$ with the same vertical composition and 2-cell identities. Then let $P'$ have the same action as $P$ on 0-, 1-, and 2-cells so that $S$ is the identity on 0-, 1-, and 2-cells. We now define horizontal composition in $\E'$. If $f\colon e \to e'$ and $g\colon e' \to e''$ in $\E'$ then $g\circ f$ is the domain of the cartesian lift of $\phi_{gf}\colon PgPf \To P(gf)$ at $gf$. If $\alpha\colon f \To f'$, $\beta\colon g \To g'$ then $\beta\circ\alpha$ is the unique map above $P\beta*P\alpha$ such that
\begin{equation*}
\cd[]{
g \circ f  \ar@{=>}[r]^{\c{\phi}{g f }} \ar@{:>}[d]_{\beta\circ\alpha} & g f  \ar@{=>}[d]^{\beta*\alpha} \\
g'\circ f' \ar@{=>}[r]_{\c{\phi}{g'f'}} & g'f'
}
\hspace{0.5cm}\text{over}\hspace{0.5cm}
\cd[]{
Pg  Pf  \ar@{=>}[r]^{\phi} \ar@{=>}[d]_{P\beta*P\alpha} & P(g f)  \ar@{=>}[d]^{P(\beta*\alpha)} \\
Pg' Pf' \ar@{=>}[r]_{\phi} & P(g'f')}
\end{equation*}
This has the effect that $P'(g\circ f) = P'g .P'f$ and $P'(\beta\circ\alpha) = P\beta*P\alpha = P'\beta*P'\alpha$. Similarly, let the identity 1-cells $\hat{1}_{e}$ be the domain of the cartesian lift of $\phi_{e}\colon 1_{Pe} \To P(1_e)$ at $1_e$. Then $P'(\hat{1}_e) = 1_{P'e}$. The coherence isomorphisms for $\E'$ are obtained as the unique maps in the following diagrams.
\begin{equation*}
\cd[]{
 (h \circ g) \circ f \ar@{=>}[r]^{\varphi} \ar@{:>}[d]_{\hat{a}}& (h\circ g). f  \ar@{=>}[r]^{\varphi.f} & (h.g).f  \ar@{=>}[d]^{a} \\
 h\circ(g\circ f) \ar@{=>}[r]_{\varphi} & h.(g\circ f) \ar@{=>}[r]_{h.\varphi} & h.(g.f)
}
\end{equation*}
\begin{equation*}
\cd[]{
 f \ar@{=>}[drr]^{r} \ar@{:>}[d]_{\hat{r}} & & \\
 f \circ \hat{1} \ar@{=>}[r]_{\varphi} & f.\hat{1} \ar@{=>}[r]_{f.\varphi}& f.1
}
\hspace{1cm}
\cd[]{
 f \ar@{=>}[drr]^{l} \ar@{:>}[d]_{\hat{l}} & & \\
 \hat{1} \circ f \ar@{=>}[r]_{\varphi} & \hat{1}.f \ar@{=>}[r]_{\varphi.f} & 1.f
}
\end{equation*}
The reader can verify that this is a bicategory and that $P'$ is a homomorphism that preserves composition and identities.
Let $S$ be the identity on 0-, 1-, and 2-cells. The coherence morphisms for $S$ are the isomorphisms $\c{\phi}{gf}\colon g \circ f \cong gf$ and $\c{\phi_e}{1_{e}} \colon \hat{1}_e \cong 1_e$ described above. It is easy to check that $P'S = P$ and since $S$ is the identity on 0-, 1-, and 2-cells it is an isomorphism. 

Suppose that $P$ is a fibration. The cartesian 1-cells in $\E'$ are precisely those we obtain in $\E$. Lifts along cartesian 1-cells are $(\hat{h},\alpha^!,\hat{\beta})$ obtained by taking lifts $(\hat{h},\hat{\alpha},\hat{\beta})$ in $\E$ and letting $\alpha^! = \hat{\alpha}\c{\phi}{f\hat h}$. Thus $P'$ has cartesian lifts of 1-cells. Since the action of $P'$ on hom-categories is the same as $P$, $P'$ is locally fibred. Suppose that $\beta$, $\alpha$ are cartesian 2-cells. Their composite in $\E'$ is defined using the diagram
\begin{equation*}
\cd[]{
 g \circ f  \ar@{=>}[r]^{\varphi} \ar@{=>}[d]_{\beta\circ\alpha}& g .f  \ar@{=>}[d]^{\beta*\alpha}  \\
 g'\circ f' \ar@{=>}[r]_{\varphi} & g'.f'
}.
\end{equation*}
Since $P$ is a fibration $\beta*\alpha$ is cartesian. Then the cancellation property of cartesian 1-cells tells us that $\beta\circ\alpha$ is also cartesian. Thus $P'$ is a fibration. Chosen cartesian 1- and 2-cells in $\E'$ are the same as in $\E$. Since $S$ is the identity on 0-, 1-, and 2-cells it is cartesian.
\end{proof}

\begin{Rk}
These last two results rely on the local iso-lifting property of $P$.
The first result corresponds to the lifting of an isomorphism that arises from the bipullback definition of cartesian 1-cell.
\begin{equation*}
\cd[@C-8pt]{
\mathbbm{2} \ar@/_10pt/[ddr] \ar@/^10pt/[drr] \ar[dr] & & \\
&\twocong{u} \twocong{l} \E(z,x) \ar[d]^{P_{zx}} \ar[r]^{f_*} & \E(z,y) \ar[d]^{P_{zy}} \twocong{dl} \\
& \B(Pz,Px) \ar[r]_{Pf_*} & \B(Pz,Py)
}
\Mapsto{}
\cd[@C-8pt]{
\mathbbm{2} \ar@/_10pt/[ddr] \ar@/^10pt/[drr] \pair[8pt]{dr}{}{} \twocong{dr} \twocong{drr} & & \\
& \E(z,x) \ar[d]^{P_{zx}} \ar[r]^{f_*} & \E(z,y) \ar[d]^{P_{zy}} \twocong{dl} \\
& \B(Pz,Px) \ar[r]_{Pf_*} & \B(Pz,Py)
}
\end{equation*}
The isomorphism on the left is the $\hat{\beta}$ associated with lifts along cartesian $f$.
The second result corresponds to the lifting of the isomorphisms associated with the action of $P$ on composition and identities.
\begin{equation*}
\cd[]{
\E(y ,z )\times \E(x ,y ) \ar[d]_{P_{yz}\times P_{xy}} \ar[r]^-{*} \twocong{dr} & \E(x,z) \ar[d]^{P_{xz}}  \\
\B(Py,Pz)\times \B(Px,Py) \ar[r]_-{*} & \B(Px,Pz)
}
\Mapsto{}
\end{equation*}
\begin{equation*}
\cd[]{
\E(y ,z )\times \E(x ,y ) \ar[d]_{P_{yz}\times P_{xy}} \pair{r}{*}{\circ}  \twocong{r} & \E(x,z) \ar[d]^{P_{xz}}  \\
\B(Py,Pz)\times \B(Px,Py) \ar[r]_-{*} & \B(Px,Pz)
}
\end{equation*}
\begin{equation*}
\cd[]{
\mathbbm{1} \ar@/_12pt/[dr]_{1_{Px}} \ar[r]^{1_x}  & \E(x,x) \ar[d]^{P_{xx}}  \\
\twocong[0.6]{ur}  & \B(Px,Px)
}
\Mapsto{}
\cd[]{
\mathbbm{1} \ar@/_12pt/[dr]_{1_{Px}} \pair{r}{1_x}{\hat{1}_x} \twocong{r} & \E(x,x) \ar[d]^{P_{xx}}  \\
  & \B(Px,Px)
}
\end{equation*}
We've borrowed this convenient visual representation of iso-lifting from \cite{joyal1993}.
\end{Rk}

From this point on we will suppose that all fibrations preserve composition and identities and have the stronger lifting property of Proposition \ref{prop:nicer_lifting}. Working under this supposition will vastly simplify future calculations. We can do this without loss of generality because all fibrations are isomorphic to a fibration of this kind.

\begin{Defn}
A homomorphism $\eta \colon \E \to \D$ between fibred categories is called \emph{cartesian} when it preserves cartesian maps and $Q\eta = P$.
\end{Defn}

Let $\Fib(\B)$ be the tricategory whose objects are fibrations over $\B$, whose 1-cells are cartesian homomorphisms,
2-cells are pseudo-natural transformations $\Gamma \colon \eta \To \epsilon$ that have $Q\Gamma = 1_P$; and
3-cells are modifications $\zeta \colon \Gamma \Rrightarrow \Lambda$ that have $Q\zeta = 1_{1_P}$.
Let $\bicat$ be the tricategory of bicategories, homomorphisms, transformations and modifications. Let $[\B^\coop,\bicat]$ be the tricategory of contravariant trihomomorphisms from $\B$ to $\bicat$, tritransformations, trimodifications and perturbations.

\subsection{The Grothendieck construction}

Before describing the Grothendieck construction for bicategories we will unpack the tricategory structure of $\bicat$ and see what a contravariant trihomomorphism from $\B$ to $\bicat$ really means.

\begin{Rk}
An algebraic definition of tricategory can be found in \cite[p.\ 23]{gurski2006}. There is more than one tricategory structure on $\bicat$. We choose the following:
\begin{itemize}
\item Composition of $1$-cells is the usual composition of homomorphisms: $GF(x) = G(F(x))$ and the usual coherence isomorphisms.

\item Composition of $2$-cells: When $\alpha \colon F \To G$ and $\beta \colon G \To H$ we let $(\beta\alpha)_x = \beta_x\alpha_x$ and $(\beta\alpha)_f$ be the associated pasting of 2-cells.
When 
\begin{equation*}
\cd[]{
\B \pair{r}{F}{F'} \dtwocell{r}{\alpha} & \C \pair{r}{G}{G'} \dtwocell{r}{\beta} & \D
}
\end{equation*}
we let $(\beta*\alpha)_x$ be
\begin{equation*}
\cd[]{
GF(x) \ar[r]^{\beta_{F(x)}} & G'F(x) \ar[r]^{G'(\alpha_x)} & G'F'(x)
}.
\end{equation*}
$(\beta*\alpha)_f$ be the associated pasting of 2-cells.

\item Composition of 3-cells is similar to that for 2-cells.

\item Identity 1-cells are the obvious identity homomorphisms $1_\B \colon \B \to \B$. Identity 2-cells are transformations $1_F \colon F \To F$ with 1-cell components $(1_F)_x = 1_{F(x)}$. Identity 3-cells are modifications $1_\alpha$ with $(1_\alpha)_x = 1_{(\alpha_x)}$.
\end{itemize}
The rest of the data consists firstly of pseudo-natural equivalences governing associativity and the action of identities. The final  data are invertible modifications that sit in place of the usual axioms. The details can be found in \cite{gordon1995}.

\item Associativity of composition is governed by a pseudo-natural equivalence
\begin{equation*}
\cd[]{
(HG)F \ar@{=>}[d]_{(\gamma\beta)\alpha} \ar@{=>}[r]^{a_{HGF}} & H(GF) \ar@{=>}[d]^{\gamma(\beta\alpha)}\\
(H'G')F' \ar@{=>}[r]_{a_{H'G'F'}}  \twocong{ur} & H'(G'F') 
}.
\end{equation*}
The 2-cells components are identity transformations $1 \colon H(GF) \To (HG)F$. The 3-cell components are invertible modifications whose components are a composite of coherence isomorphisms in the image of $H'(G'F')$.

\item The unity of identities is governed by two pseudo-natural transformations
\begin{equation*}
\cd[]{
1.F  \ar@{=>}[d]_{1_1.\alpha} \ar@{=>}[r]^{l_F} & F \ar@{=>}[d]^{\alpha}\\
1.F' \ar@{=>}[r]_{l_F'}  \twocong{ur} & F' 
}
\hspace{1cm}
\cd[]{
F \ar@{=>}[d]_{\alpha} \ar@{=>}[r]^{r_F} & F.1 \ar@{=>}[d]^{\alpha.1_1}\\
F' \ar@{=>}[r]_{r_F'}  \twocong{ur} & F'.1
}.
\end{equation*}
The 2-cells components are identity transformations $1 \colon 1F \To F$ and $1 \colon F \To F1$. The 3-cell components are invertible modifications whose components are a composite of coherence isomorphisms in the image of $F'$.

\item There are invertible modifications $\pi$, $\mu$, $\lambda$, $\rho$ that relate various composites of $a$, $l$, $r$ above. In this case they are built from the coherence isomorphisms of assorted bicategories and homomorphisms.

\item There are three axioms that $\pi$, $\mu$, $\lambda$, $\rho$ are required to satisfy. The coherence theorem for bicategories guarantees that they hold.

There is a related tricategory structure given by an alternate composition rule for 2,3-cells. Choosing one or the other will not significantly effect the nature of our results.
\end{Rk}

\begin{Rk}
An algebraic definition of trihomomorphism can be found in \cite[p.\ 29]{gurski2006}.  A trihomomorphism $F\colon \B^\coop \to \bicat$ consists of the following data:
\begin{itemize}
\item An object function $F_0 \colon \ob B \to \ob \bicat$.
\item For objects $a$, $b$ of $\B$, a pseudo-functor $\B(a,b) \to \bicat(Fa,Fb)$. This means that 2-cell composition and identities in $\B$ are preserved up to natural isomorphisms satisfying standard coherence axioms. The data is just isomorphisms $\phi_{\beta\alpha}\colon F(\beta.\alpha) \Rrightarrow F\alpha.F\beta$ and $\phi_f \colon F(1_f) \Rrightarrow 1_{Ff}$.
The components are
\begin{equation*}
\cd[@R-1.0cm]{
Ff(x) \ar@/_6pt/[dr]_{F\alpha_x} \dtwocell{rr}{\phi_{\beta\alpha}} \ar@/^12pt/[rr]^{F(\beta.\alpha)_x} & & Fh(x) \\
 &  Fg(x)  \ar@/_6pt/[ur]_{F\beta_x} & 
}
\hspace{0.5cm}\text{and}\hspace{0.5cm}
\cd[]{
Ff(x) \pair[12pt]{rr}{F(1_f)_x}{1_{Ff(x)}} \dtwocell{rr}{\phi_f}  & & Ff(x)
}.
\end{equation*}

\item For objects $a$, $b$, $c$ of $\B$, an adjoint equivalence $\chi\colon \otimes'\circ (F \times F) \To F \circ \otimes$. This means that horizontal composition is preserved up to adjoint equivalence. The data is 
\begin{equation*}
\cd[]{
Ff.Fg \ar@{=>}[r]^{\chi_{gf}} \ar@{=>}[d]_{F\alpha.F\beta} & F(gf) \ar@{=>}[d]^{F(\beta\alpha)} \\
Ff'.Fg' \dlthreecell{ur}{\chi_{\beta\alpha}} \ar@{=>}[r]_{\chi_{g'f'}} & F(g'f')
}
\end{equation*}
which amounts to
\begin{equation*}
\cd[]{
FfFg(x) \ar[r]^{{\chi_{gf}}_x} \ar[d]_{FfFg(h)} & Fgf(x) \ar[d]^{Fgf(h)} \\
FfFg(y) \dltwocell{ur}{{\chi_{gf}}_h} \ar[r]_{{\chi_{gf}}_y} & Fgf(y)
}
\hspace{0.5cm}\text{and}\hspace{0.5cm}
\cd[@R-4pt]{
FfFg(x) \ar[r]^-{{\chi_{gf}}_x} \ar[d]_{F\alpha_{Fg(x)}} & Fgf(x) \ar[dd]^{F(\beta\alpha)_x} \\
Ff'Fg(x) \ar[d]_{Ff'(F\beta_x)} & \\
Ff'Fg'(x) \dltwocell{uur}{{\chi_{\beta\alpha}}_x} \ar[r]_-{{\chi_{g'f'}}_x} & Fgf(x)
}.
\end{equation*}
The data on the left indicates that $\chi_{gf}$ is a pseudo-natural equivalence and the data on the right is the component at $x$ of the modification $\chi_{\beta\alpha}$.

\item For objects $a$ of $\B$, an adjoint equivalence transformation $\iota\colon I'_{Fa} \To F \circ I_a$. This means that identity 1-cells are preserved up to adjoint equivalence. The data is 
\begin{equation*}
\cd[]{
1_{Fa} \ar@{=>}[r]^{\iota_{a}} \ar@{=>}[d]_{1_{1_{Fa}}} & F1_a \ar@{=>}[d]^{F(1_{1_a})} \\
1_{Fa} \dlthreecell{ur}{\iota_{1_a}} \ar@{=>}[r]_{\iota_{a}} & F1_a
}. 
\end{equation*}
This means
\begin{equation*}
\cd[]{
x \ar[r]^{{\iota_{a}}_x} \ar[d]_{h} & F1(x) \ar[d]^{F1(h)} \\
y \dltwocell{ur}{{\iota_{a}}_h} \ar[r]_{{\iota_{a}}_y} & F1(y)
}
\hspace{0.5cm}\text{and}\hspace{0.5cm}
\cd[@R-4pt]{
x \ar[r]^-{{\iota_{a}}_x} \ar[d]_{1_x} & F1(x) \ar[d]^{{F(1_1)}_x} \\
x \dltwocell{ur}{{\iota_{1_a}}_x} \ar[r]_-{{\iota_{a}}_x} & F1(x)
}.
\end{equation*}
The data on the left indicates that $\iota_{a}$ is a pseudo-natural equivalence and the data on the right is the component at $x$ of the modification $\iota_{1_a}$.

\item For objects $a$, $b$, $c$, $d$ of $\B$, an invertible modification $\omega$ whose component at $fgh$ is itself an invertible modification whose component at $x$ is an invertible pseudo-natural transformation
\begin{equation*}
\cd[@R-16pt@C-8pt]{
   & FfF(h.g)(x) \ar[r]^{\chi_x} & F((h.g).f)(x) \ar[dr]^{Fa} &  \\
FfFgFh(x) \ar[ur]^{Ff(\chi_x)} \dtwocell{rrr}{\omega_{fgh,x}} \ar[dr]_{1} &   &   & F(h.(g.f))(x) \\
   & FfFgFh(x) \ar[r]_{\chi_{Fh(x)}}  & F(g.f)Fh(x)  \ar[ur]_{\chi_x} &
}.
\end{equation*}

\item For objects $a$, $b$ of $\B$, an invertible modification $\gamma$ whose component at $f$ is itself an invertible modification whose component at $x$ is an invertible pseudo-natural transformation
\begin{equation*}
\cd[]{
  & FfF1(x) \ar[r]^{\chi_x} &  F(1.f)(x) \ar[dr]^{Fl_x} &  \\
Ff(x) \ar[ur]^-{Ff(\iota_{x})} \ar[rrr]_{1} & & & Ff(x) \dtwocell[0.5]{ulll}{\gamma_x}  
}.
\end{equation*}

\item For objects $a$, $b$ of $\B$, an invertible modification $\delta$ whose component at $f$ is itself an invertible modification whose component at $x$ is an invertible pseudo-natural transformation
\begin{equation*}
\cd[]{
Ff(x) \ar[dr]_1 \ar[rrr]^{Fr_x} & & & F(f.1)(x) \dtwocell[0.5]{dlll}{\delta_x} \\ 
  & Ff(x) \ar[r]_-{\iota_{Ff(x)}}  &  F1Ff(x) \ar[ur]_{\chi_x} &  
}.
\end{equation*}

\item There are two axioms involving $\omega$, $\delta$, $\gamma$, $\iota$, $\chi$ above. They can be found in \cite{gurski2006, gordon1995}.

%

\end{itemize}

\end{Rk}

We will describe the Grothendieck construction for fibred bicategories: a triequivalence
$$\el \colon [\B^\coop,\bicat] \to \Fib(\B)$$
that generalises the result given in section \ref{sec:strict}.
Again, we are mostly concerned with its action on objects and use ``Grothendieck construction'' to mean both the action on objects and the whole trihomomorphism.

\begin{Con}[The Grothendieck construction for bicategories] 

Suppose $F \colon \B^\coop \to \bicat$ is a trihomomorphism. We define a fibration $P_F \colon \el F \to \B$ as follows. $\el F$ is the bicategory with:

\begin{itemize}
\setcounter{enumi}{-1}
\item 0-cells are pairs $(x,\e{x})$ where $ x\in \B$ and $\e{x} \in Fx$.

\item 1-cells are pairs $(f,\e{f}) \colon (x,\e{x}) \to (y,\e{y})$ where
$f \colon x \to y$ and
$\e{f} \colon \e{x} \to Ff(\e{y})$.

\item 2-cells are pairs $(\alpha,\e{\alpha}) \colon (f,\e{f}) \To (g,\e{g}) \colon (x,\e{x}) \to (y,\e{y})$ where $\alpha \colon f\To g$ and
\begin{equation*}
\cd[@C-2pt]{
 \e{x} \ar[rr]^{\e{f}} \ar[dr]_{\e{g}} & & Ff(\e{y}) \\
 & \dtwocell[0.6]{u}{\e{\alpha}}  Fg(\e{y}) \ar[ur]_{F\alpha_{\e{y}}} &
}.
\end{equation*}

\item If $(g,\e{g}) \colon (y,\e{y})\to (z,\e{z})$ then the composite $(g,\e{g}).(f,\e{f})$ has first component $g.f$ and second component
\begin{equation*}
\cd[@C+16pt]{
\e{x} \ar[r]^{\e{f}} & Ff(\e{y}) \ar[r]^{Ff(\e{g})} & FfFg(\e{z}) \ar[r]^{\chi_{gf}} & Fgf(\e{z}).
}
\end{equation*}

\item If $(\gamma,\e{\gamma}) \colon (g,\e{g}) \to (h,\e{h}) \colon (x,\e{x}) \to (y,\e{y})$ then the vertical composite has first component $\gamma.\alpha$ and second component
\begin{equation*}
\cd[@C+12pt]{
 \e{x} \ar[rrr]^{\e{f}} \ar[drr]_{\e{g}} \ar[ddr]_{\e{h}} & & & Ff(\e{y}) \\
 & & Fg(\e{y}) \dtwocell{u}{\e{\alpha}} \ar[ur]^{F\alpha_{\e{y}}} & \\
 & Fh(\e{y})   \dtwocell[0.4]{uu}{\e{\gamma}} \ar[ur]^{F\gamma_{\e{y}}} \ar@/_36pt/[uurr]_{F\gamma\alpha_{\e{y}}} & \dtwocell[0.5]{u}{\phi_{\alpha\gamma_{\e{y}}}}&
}
\end{equation*}

\item If $(\beta,\e{\beta}) \colon (j,\e{j}) \to (k,\e{k}) \colon (y,\e{y}) \to (z,\e{z})$ then the horizontal composite has first component
$\beta*\alpha$ and second component
\begin{equation*}
\cd[@C+18pt@R+10pt]{
 \e{x} \dtwocell[0.3]{drr}{\e{\alpha}} \ar[r]^{\e{f}} \ar[dr]_{\e{g}} & Ff(\e{y}) \dtwocell[0.3]{drr}{Ff(\e{\beta})}  \ar[dr]_{Ff(\e{k})} \ar[r]^{Ff(\e{j})} & FfFj(\e{z}) \ar[r]^{\chi_{jf}}  & Fjf(\e{z})  \\
 & Fg(\e{y}) \dtwocell[0.4]{r}{F\alpha_{\e{k}}} \ar[u]|{F\alpha_\e{y}}  \ar[dr]_{Fg(\e{k})} & FfFk(\e{z}) \ar[u]|{Ff(F\beta_\e{z})}   &  \\
 &  & FgFk(\e{z}) \ar@/_28pt/[uu]_{(F\alpha F\beta)_\e{z}} \ar[u]|{F\alpha_{Fk(\e{z})}} \ar[r]_{\chi_{kg}} & Fkg(\e{z}) \ar@/_28pt/[uu]_{F\beta\alpha_\e{z}} \dtwocell[0.3]{uul}{\chi_{\beta\alpha}}
}.
\end{equation*}
The 2-cell labelled $Ff(\e{\beta})$ is strictly the composite $\phi.Ff(\e{\beta})$ where $\phi$ is an isomorphism associated with $Ff$. In order to simplify this diagram and those that follow, all isomorphisms associated with such homomorphisms have been omitted.

\item Identity 1-cells are $1_{(x,\e{x})} = (1_x, (i_x)_\e{x})$, the second component is
\begin{equation*}
\cd[@C+12pt@R-12pt]{
1_{Fx}(\e{x}) \ar[r]^{(i_x)_\e{x}}  & F{1_x}(\e{x})
}.
\end{equation*}

\item Identity 2-cells are $1_{(f,\e{f})} = (1_f, \phi_f\e{f}.l_{\e{f}})$, the second component is
\begin{equation*}
\cd[@C+18pt@R+6pt]{
\e{x} \ar@/_12pt/[dr]_{\e{f}} \ar[r]^{\e{f}} \dtwocell[0.4]{dr}{l_{\e{f}}} & Ff(\e{y}) \\
  & Ff(\e{y}) \pair{u}{1}{F{1_f}_\e{y}} \rtwocell{u}{\phi_f}
}.
\end{equation*}

\item For associativity isomorphisms, 2-cells with first component $a_{fgh}$ and second component given by the following composite.
\begin{equation*}
\cd[@R+12pt]
{
\e{w} \ar[r]^{\e{f}} & Ff(\e{x}) \ar[r]^{Ff(\e{g})}  & FfFg(\e{y}) \ar[dr]_{FfFg(\e{h})} \ar[r]^{\chi_{\e{y}}}  & \dtwocell{d}{\chi_{\e{h}}} Fgf(\e{z}) \ar[r]^{Fgf(\e{h})}  & FgfFh(\e{z}) \dtwocell{dr}{\omega^{-1}_{fgh}} \ar[r]^{\chi} & Fh(gf)(\e{z}) \\
 & &   &   FfFgFh(\e{z}) \ar[r]_{Ff(\chi)} \ar[ur]_{\chi_{Fh(\e{z})}} & FfFhg(\e{z}) \ar[r]_{\chi} & F(hg)f(\e{z}) \ar[u]_{Fa_{\e{z}}} 
}
\end{equation*}

\item For left unit isomorphisms, 2-cells with first component $l_f$ and second component given by the following composite.
\begin{equation*}
\cd[@R+12pt@C-6pt]
{
\e{x} \ar[r]^{\e{f}} & Ff(\e{y}) \ar[dr]_-{Ff(i_y)} \ar[rrr]^1 &  &  & Ff(\e{y}) \\
 & & FfF1_y(\e{y}) \ar[r]_-{\chi} \dtwocell[0.5]{ur}{\gamma^{-1}_f} Ff(\e{y})  & F1_yf(\e{y}) \ar[ur]_{Fl_{\e{y}}}
}
\end{equation*}

\item For right unit isomorphisms, 2-cells with first component $r_f$ and second component given by the following composite.
\begin{equation*}
\cd[@R+0pt]
{
\e{x} \ar[r]^{i} \ar@/_8pt/[drr]_{\e{f}} & F1(\e{x}) \ar[r]^{F1(f)} & F1Ff(\e{y})  \ar[rr]^{\chi}  & \ & Ff1(\e{y}) \\
  & & Ff(\e{y})  \dtwocell{ul}{i} \ar[u]_{i_{Ff}} \ar@/_8pt/[urr]_{Fr_{\e{y}}}  & \dtwocell{ul}{\delta^{-1}_f} &
}
\end{equation*}

\end{itemize}

By projecting onto the first component of $\el F$ we obtain a homomorphism $P_F \colon \el F \to \B$.
\end{Con}

\begin{Prop}
$P_F \colon \el F \to \B$ is a fibration.
\end{Prop}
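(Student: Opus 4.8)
The plan is to check the three clauses in the definition of fibration after first disposing of the two preliminary facts that $\el F$ is a bicategory and that $P_F$ is a homomorphism. For the bicategory structure, all associativity and unit data have been written down explicitly above; the pentagon and triangle identities, once the coherence isomorphisms of the bicategories $Fx$ in the image of $F$ and of the homomorphisms $Ff$ are cancelled, unwind into exactly the two trihomomorphism axioms relating $\omega$, $\gamma$, $\delta$, $\chi$, $\iota$. I would carry this out as a (long but mechanical) pasting-diagram chase, indicating only which axiom yields which identity. Since composition, associators and unitors in $\el F$ have the corresponding data of $\B$ as their first components, $P_F$ strictly preserves all of this and is therefore a (strict) homomorphism; in particular its composition constraint $\phi_{hf}$ is an identity, a fact used below.

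Next I would establish local fibredness. For fixed $(x,\e x)$ and $(y,\e y)$, inspection of the data for $1$- and $2$-cells and of vertical composition shows that the hom-category $\el F\big((x,\e x),(y,\e y)\big)$ is precisely the category of elements of the pseudofunctor $G\colon \B(x,y)^{\op}\to\Cat$ sending $f$ to $Fx\big(\e x, Ff(\e y)\big)$ and a $2$-cell $\alpha\colon f\To g$ to post-composition with $F\alpha_{\e y}$, the coherence isomorphisms of $G$ being the constraints $\phi_{\alpha\gamma}$ of the local pseudofunctor underlying $F$. Under this identification $P_{xy}$ is the classical Grothendieck fibration $\el G\to\B(x,y)$, so $P_{xy}$ is a fibration; moreover a $2$-cell $(\alpha,\e\alpha)$ of $\el F$ is cartesian for $P_{xy}$ if and only if $\e\alpha$ is an invertible $2$-cell in $Fx$.

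For the existence of cartesian $1$-cells I would show, following the strict case, that $(f,1_{Ff(\e y)})\colon (x,Ff(\e y))\to(y,\e y)$ is cartesian over $f$ by verifying Definition \ref{def:cartesian1_weak} directly. Given $(g,\e g)\colon(z,\e z)\to(y,\e y)$, a $1$-cell $h\colon z\to x$ and an isomorphism $\alpha\colon fh\To g$, one sets $\hat h=\big(h,\ \chi^{\centerdot}_{\e y}\cdot F\alpha_{\e y}\cdot\e g\big)$ where $\chi^{\centerdot}_{\e y}$ is a pseudo-inverse of the equivalence $\chi_{\e y}\colon FhFf(\e y)\to F(fh)(\e y)$, builds $\hat\alpha$ from the unit of this equivalence together with the constraints of $Ff$, and takes $\hat\beta=1_h$ (legitimate since $\phi_{hf}=1$, so the coherence equation $\alpha\cdot P_F f\,\hat\beta=P_F\hat\alpha\cdot\phi_{hf}$ is trivial). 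The $2$-cell clause of Definition \ref{def:cartesian1_weak} then reduces to solving, for a given $\delta$ downstairs and $2$-cell upstairs, a single triangle of $2$-cells in $Fz$ whose remaining edges are the invertible components of $\chi$ and the given isomorphisms; this has a unique solution, giving the unique $\hat\delta$. Equivalently one may check that the associated square of hom-categories is a bipullback, using that $P_{zy}$ is locally iso-lifting (being a fibration) and that $\chi$ is an adjoint equivalence, so the comparison into the pseudo-pullback is essentially surjective and fully faithful. Either way, every $f\colon b\to P_F e$ has a cartesian lift.

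Finally, closure of cartesian $2$-cells under horizontal composition: by Proposition \ref{chosen_2-cells_enough_weak}(1) it suffices to show the chosen cartesian $2$-cells — which under the identification above are the $(\alpha,1_{F\alpha_{\e y}\cdot\e g})\colon(f,F\alpha_{\e y}\cdot\e g)\To(g,\e g)$ — are closed under horizontal composition. Substituting $\e\alpha=1$ and $\e\beta=1$ into the formula for the second component of a horizontal composite, every constituent $2$-cell is invertible (the naturality $2$-cells of the pseudo-natural transformation $F\alpha$, the component of the invertible modification $\chi_{\beta\alpha}$, and the constraints of the homomorphisms $Ff$), so the composite has invertible second component and is therefore cartesian by the criterion above. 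Combining the three clauses gives that $P_F$ is a fibration. I expect the main obstacle to be the cartesian $1$-cell verification together with the bicategory-axiom check: both require carefully threading the adjoint equivalences $\chi$, $\iota$ and the modifications $\omega$, $\gamma$, $\delta$ through large pastings, whereas local fibredness and the horizontal-composition clause collapse quickly to the ordinary Grothendieck construction once the hom-categories are recognised as categories of elements.
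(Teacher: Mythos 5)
Your proposal is correct and follows essentially the same route as the paper: the same chosen cartesian lifts $(f,1_{Ff(\e{y})})$ and $(\alpha,1_{F\alpha_{\e{y}}\e{g}})$, the same reduction of the horizontal-composition clause to chosen lifts via Proposition \ref{chosen_2-cells_enough_weak}, and the same appeal to the trihomomorphism axioms for the bicategory structure of $\el F$. Your one refinement --- recognising each hom-category of $\el F$ as the classical category of elements of $f\mapsto Fx(\e{x},Ff(\e{y}))$, so that local fibredness and the criterion ``cartesian iff second component invertible'' come for free --- is a clean repackaging of the paper's direct verification, but not a different argument.
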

\begin{proof}
To show that $\el F$ is a bicategory we need to use the axioms for a trihomomorphism.
\begin{itemize}
\item $\el F((x,\e{x}),(y,\e{y}))$ is a category: the definition of vertical composition uses the coherence isomorphisms for the homomorphism $F_{xy}$. The three axioms for this homomorphism give associativity and left and right identity in $\el F((x,\e{x}),(y,\e{y}))$.
\item Horizontal composition is functorial: the definition of horizontal composition uses the isomorphism $\chi_{\beta\alpha}$ from the transformation $\chi$. The two axioms for this transformation make horizontal composition in $\el F$ functorial.
\item Coherence axioms: the coherence isomorphisms make use of invertible modifications $\omega$, $\gamma$ and $\delta$. The two axioms for these modifications ensure that the axioms for a bicategory hold.
\end{itemize}

Observe that composition in $\el F$ in the first component is just composition in $\B$, thus $P_F$ is a homomorphism that preserves composition and identities strictly.
We need to show that $P_F \colon \el F \to \B$ is a fibration.

Suppose that $f \colon x \to y$ in $\B$ and $(y,\e{y})$ is in $\el F$. We claim that $ \c{f}{(y,\e{y})} = (f, 1_{Ff(\e{y})}) \colon (x, Ff(\e{y})) \to (y,\e{y})$ is cartesian over $f$.
\begin{equation*}
\cd[@R-6pt]{
Ff(\e{y}) \ar[r]_{1_{Ff(\e{y})}} & Ff(\e{y})
}
\end{equation*}
Whenever
\begin{equation*}
\cd[@C+6pt]{
(z,\e{z}) \ar[dr]^{(g,\e{g})} \ar[d]_{(h,\ )} & \urtwocell[0.65]{dl}{(\alpha,\ )} \\
(x,Ff(\e{y})) \ar[r]_-{(f,1)} & (y,\e{y})
}
\hspace{0.5cm}\text{over}\hspace{0.5cm}
\cd[@C+18pt@R+6pt]{
z \ar[dr]^{g} \ar[d]_{h} & \urtwocell[0.7]{dl}{\alpha} \\
x \ar[r]_f & y
}
\end{equation*}
we can choose $\e{h} = \chi^{\centerdot}.F\alpha_{\e{y}}.\e{g}$ and
$\e{\alpha}$ equal to
\begin{equation*}
\cd[@C+6pt]{
z \ar[r]^{\e{g}} \ar@/_20pt/[drrrr]_{\e{g}} & Fg(\e{y}) \ar[r]^{F\alpha_\e{y}} & Ffh(\e{y}) \ar@/_36pt/[rrr]_(0.3){1} \ar[r]^{\chi^{\centerdot}} \dtwocell{d}{l} & FhFf(\e{y}) \ar[r]^{Fh(1)} \ar@/_20pt/[rr]_(0.3){\chi} \ar@/_4pt/[r]_{1} \dtwocell[0.35]{d}{\epsilon} & \dtwocell[0.3]{d}{r} FhFf(\e{y}) \ar[r]^{\chi} & Ffh(\e{y}) \\
&&&& Fg(\e{y}) \ar@/_8pt/[ur]_{F\alpha_\e{y}} &
}
\end{equation*}
to give a lifting of $(h,\alpha)$ as required. Showing the 2-cell property is not very difficult but requires large diagrams that we will not include here.
Thus $P_F$ has cartesian 1-cells.

Suppose that $\alpha \colon f \To g \colon x \to y$ in $\B$ and $(g,\e{g})$ is in $\el F$. We claim that $ \c{\alpha}{(g,\e{g})} = (\alpha, 1_{F\alpha_{\e{y}}g}) \colon (f, F\alpha_{\e{y}}g) \To (g,\e{g}) \colon (x,\e{x}) \to (y,\e{y})$ is cartesian over $\alpha$.
\begin{equation*}
\cd[@C+16pt]{
x \ar[r]^-{F\alpha_{\e{y}}g} \ar[dr]_{g} & Ff(\e{y}) \\
\dtwocell[0.65]{ur}{1}  & Fg(\e{y}) \ar[u]_{F\alpha_{\e{y}}}
}
\end{equation*}
Whenever
\begin{equation*}
\cd[@C+6pt]{
(h,\e{h}) \ar@{=>}[dr]^{(\gamma,\e{\gamma})} \ar@{=>}[d]_{(\delta,\ )} &  \\
(f,F\alpha_{\e{y}}g \ar@{=>}[r]_-{(\alpha,1)} & (g,\e{g})
}
\hspace{0.5cm}\text{over}\hspace{0.5cm}
\cd[@C+18pt]{
h \ar@{=>}[dr]^{\gamma} \ar@{=>}[d]_{\delta} &  \\
f \ar@{=>}[r]_\alpha & g
}
\end{equation*}
we find that $\e{\delta} = \e{\gamma}.\phi_{\delta\alpha}^{-1} g$ and that this uniquely makes $(\gamma,\e{\gamma}) = (\alpha,1).(\delta,\e{\delta})$. This occurs precisely because $1_{F\alpha_{\e{y}}g}$ is an isomorphism.
Thus $P_F$ has cartesian 2-cells.

The horizontal composite of the two lifts $(\alpha, 1_{F\alpha_{\e{y}}g})$, $(\beta, 1_{F\beta_{\e{z}}k})$ has first component $\beta*\alpha$ and second component a pasting of $\chi_{\beta\alpha}$ and $F\alpha_{\e{g}}$. Since the second component is an isomorphism we can use the argument above to show that this is cartesian.
Thus by Proposition \ref{chosen_2-cells_enough_weak} cartesian 2-cells are closed under horizontal composition.
\end{proof}

\begin{Con}[Pseudo-inverse to the Grothendieck construction] \label{con:GC_inverse_weak}

Suppose that $P\colon \E \to \B$ is a fibration. We define a trihomomorphism $F_P\colon \B^\coop \to \bicat$ as follows:

\vspace{6pt}
\noindent \emph{on 0-cells:} $Fb = \E_b$ for all $b\in B$. $\E_b$ is the fibre of $P$ over $b$. Its 0-, 1- and 2-cells are those in $\E$ that map to $b$, $1_b$ and $1_{1_b}$. Horizontal composition of 1-cells is defined by $g\hat{*}f = r^*(g.f)$: the domain of the cartesian lift of $r\colon 1_b \to 1_b.1_b$ at $g.f$. Composition of 2-cells is defined to be the unique 2-cell in the following diagram.
\begin{equation*}
\cd[]{
g\hat{*}f \ar@{=>}[r]^{\varphi} \ar@{:>}[d]_{\alpha\hat{*}\beta} & g.f \ar@{=>}[d]^{\alpha*\beta} \\
k\hat{*}h \ar@{=>}[r]_{\varphi} & k.h
}
\hspace{0.5cm}\text{over}\hspace{0.5cm}
\cd[]{
1 \ar@{=>}[r]^{r} \ar@{=>}[d]_{1_1} & 1.1 \ar@{=>}[d]^{1_1*1_1} \\
1 \ar@{=>}[r]_{r} & 1.1
}
\end{equation*}
Identities are given by
\begin{equation*}
\cd[@C+6pt]{
\hat{1} \ar@{=>}[r]^{\varphi} \ar@{:>}[d]_{\hat{1_1}} & 1 \ar@{=>}[d]^{1_1} \\
\hat{1} \ar@{=>}[r]_{\varphi} & 1
}
\hspace{0.5cm}\text{over}\hspace{0.5cm}
\cd[]{
1 \ar@{=>}[r]^{\phi} \ar@{=>}[d]_{1_1} & P1 \ar@{=>}[d]^{1_1} \\
1 \ar@{=>}[r]_{\phi} & P1
}
\end{equation*}
and coherence isomorphisms by
\begin{equation*}
\cd[]{
h\hat{*}(g\hat{*}f) \ar@{=>}[r]^{\varphi} \ar@{:>}[d]_{\hat{a}} & h.(g\hat{*}f) \ar@{=>}[r]^{h\varphi} & h.(g.f) \ar@{=>}[d]^{a} \\
(h\hat{*}g)\hat{*}f \ar@{=>}[r]_{\varphi} & (h\hat{*}g).f \ar@{=>}[r]_{\varphi f} & (h.g).f
}
\hspace{0.5cm}\text{over}\hspace{0.5cm}
\cd[]{
1 \ar@{=>}[r]^{r} \ar@{=>}[d]_{1_1} & 1.1 \ar@{=>}[r]^{1.r} & 1.(1.1) \ar@{=>}[d]^{a} \\
1 \ar@{=>}[r]_{r} & 1.1 \ar@{=>}[r]_{r.1} & (1.1).1
}
\end{equation*}
et cetera. 
The uniqueness of these 2-cells guarantees that the middle-four interchange holds and that these isomorphisms satisfy the axioms for a bicategory.

\vspace{6pt}
\noindent \emph{on 1-cells:} $Ff = f^*\colon \E_{b'} \to \E_b$ is the homomorphism described using the following diagram (isomorphisms omitted).
\begin{equation*}
\cd[@C+32pt@R+18pt]{
f^*e \dotpair{d}{f^*k}{f^*h} \dotrtwocell{d}{f^*\alpha} \ar[r]^{\c{f}{e}} &
e \pair{d}{k}{h} \rtwocell{d}{\alpha} \\
f^*e' \ar[r]_{\c{f}{e'}} & e'
}
\hspace{0.5cm}\text{over}\hspace{0.5cm}
\cd[@C+32pt@R+18pt]{
b \ar[r]^f \pair{d}{1_b}{1_b} \twoeq{d} 
& b' \pair{d}{1_{b'}}{1_{b'}} \twoeq{d}\\
b \ar[r]_f & b'
}
\end{equation*}
$Ff$ sends $e$ to the domain of $\c{f}{e}$. Using the cartesian 1-cell $\c{f}{e'}$ we send $h$, $k$ to $f^*h$, $f^*k$ over $1_b$ with an iso-square on the front and back and $\alpha$ is send to the unique $f^*\alpha$ over $1_{1_b}$. The action of $f^*$ on 1-cells is only defined up to a unique isomorphism. The coherence isomorphisms for $f^*$ are precisely the unique 2-cells that arise when comparing $f^*h'\hat{*}f^*h$ to $f^*(h'*h)$ and $\hat{1}_{f^*e}$ to $f^*(1_e)$.

Again, the uniqueness of these maps ensures that $f^*$ preserves vertical composition of 2-cells in the fibres and that the coherence isomorphisms satisfy the appropriate axioms.

\vspace{6pt}
\noindent \emph{on 2-cells:}  $F\sigma=\sigma^*\colon g^* \To f^*\colon E_{b'} \to E_b$ is the transformation described by the following diagrams (isomorphisms omitted).
\begin{equation*}
\cd[@R-12pt@C+12pt]{
 & f^*e \ar@/^5pt/[dr]^{\c{f}{e}} & \\
g^*e \ar@{.>}@/^5pt/[ur]^{\sigma_e^*} \pair{rr}{g_e}{\c{g}{e}} \dtwocell[0.3]{rr}{\c{\sigma}{\c{g}{e}}}  & & e \\
}
\hspace{0.5cm}\text{over}\hspace{0.5cm}
\cd[@R-12pt@C+12pt]{
 & b \ar@/^5pt/[dr]^{f} & \\
b \pair{rr}{f}{g} \dtwocell{rr}{\sigma} \ar@/^5pt/[ur]^{1} & & b'
}
\end{equation*}
We take the cartesian lift of $\sigma$ at $\c{g}{e}$ and factor its domain as $\c{f}{e}.\sigma_e^*$ (see Proposition \ref{prop:1-cells_factor_weak} below). Then $(F\sigma)_e = \sigma_e^*$. Now suppose $k\colon e \to e'$ and consider the action of $f^*$ and $g^*$ on $k$. We construct $F\sigma_k = \sigma^*_k$ as the unique isomorphism in the diagram
\begin{equation*}
\cd[@R-12pt]{
\c{f}{e'}.f^*k*\sigma^*_e \ar@{=>}[r]^{1\varphi} \ar@{:>}[d]_{1\sigma^*_k} & \c{f}{e'}.f^*k.\sigma^*_e \ar@{=>}[r]^{\tau_f 1} & k.\c{f}{e}.\sigma^*_e \ar@{=>}[r]^-{1\varphi} & k.g_e \ar@{=>}[d]^{\tau_g} \\
\c{f}{e'}.\sigma^*_{e'}*g^*k \ar@{=>}[r]_{1\varphi} & \c{f}{e'}.\sigma^*_{e'}.g^*k \ar@{=>}[rr]_{\varphi 1} &  & g_{e'}.g^*k \\
}
\end{equation*}
over
\begin{equation*}
\cd[@R-12pt@C+8pt]{
f.1 \ar@{=>}[r]^{f.r} \ar@{=>}[d]_{f.1_1} & f.1.1 \ar@{=>}[r]^{lr.1}  & 1.f.1 \ar@{=>}[r]^{1.r} & 1.f \ar@{=>}[d]^{lr} \\
f.1 \ar@{=>}[r]_{f.r} & f.1.1 \ar@{=>}[rr]_{r.1} & & f.1
}.
\end{equation*}
where $\tau_f$ is the isomorphism associated with the action of $f^*$ on $k$. It is obtained using the cartesian property of $\c{f}{e'}$ and the indicated diagrams.

Again, the uniqueness of the $\sigma^*_k$ ensures that $\sigma^*$ is actually a transformation.

\vspace{6pt}
$F$ is locally a homomorphism: Suppose $\alpha \colon f \To g \colon b \to b'$ and $\beta \colon g \To h$. Then 
\begin{equation*}
\cd[@R+20pt]
{
 & & f^*e \ar[ddr]^{\c{f}{e}} \twocong[0.35]{dd} &   \\
 & g^*e \ar[ur]^{\alpha^*_e} \twocong{d} \ar@/_10pt/[drr]_(0.4){\c{g}{e}} \ar@/^14pt/[drr]^(0.4){g_e} \dtwocell{drr}{\c{\alpha}{\ }} & &  \\
 h^*e \ar[ur]^{\beta^*_e} \ar@/_10pt/[rrr]_(0.3){\c{h}{e}} \ar@/^10pt/[rrr]^(0.3){h_e}  \dtwocell[0.3]{rrr}{\c{\beta}{\ }} & & & e
}
\hspace{0.3cm}\text{and}\hspace{0.3cm}
\cd[@R+20pt]
{
 & & f^*e \ar[ddr]^{\c{f}{e}} \twocong{dd} &   \\
 & & &  \\
 h^*e \ar[uurr]^{(\alpha.\beta)^*_e} \pair{rrr}{h_e}{\c{h}{e}} \dtwocell{rrr}{\c{\alpha.\beta}{\ }} & & & e
}
\end{equation*}
both sit over 
\begin{equation*}
\cd[]
{
 & & b \ar[ddr]^{f} \dtwocell{dd}{r}&   \\
 & & &  \\
 b \ar[uurr]^{1} \pair{rrr}{f}{h} \dtwocell{rrr}{\alpha.\beta} & & & b'
}
\end{equation*}
so there exists a unique isomorphism $(\phi_{\beta \alpha})_e \colon (\alpha.\beta)^*_e \To \alpha^*_e.\beta^*_e$. This is the component of a modification and is one of the coherence isomorphisms for $F_{bb'} \colon \B(b,b') \to \bicat(\E_{b},\E_{b'})$. The isomorphism for identities $\phi_f \colon (1_f)^* \To (1_{f^*})$ is formed in a similar way. Their uniqueness ensures that they satisfy the appropriate axioms.

Horizontal composition is preserved up to pseudo-natural equivalence: Suppose $\alpha \colon f \To g \colon b \to b'$ and $\beta \colon h \To k \colon b' \to b''$ then since cartesian 1-cells are unique up to equivalence we get an equivalence ${\chi_{hf}}_e$
\begin{equation*}
\cd[@C+10pt]
{
 (hf)^*e \ar[drr]^{\c{hf}{e}} \ar[d]_{{\chi_{hf}}_e}  & \urtwocell[0.6]{dl}{\hat{r}} &   \\
 f^*h^*e \ar[r]_-{\c{f}{h^*e}} & h^*e \ar[r]_-{\c{h}{e}} & e
}
\hspace{1cm}\text{over}\hspace{1cm}
\cd[@C+16pt]
{
 b \ar[drr]^{hf} \ar[d]_{1} & \urtwocell[0.6]{dl}{r} &   \\
 b \ar[r]_{f} & b' \ar[r]_{h} & b''
}
\end{equation*}
that is unique up to isomorphism. It is the 1-cell component of a transformation $\chi_{hf} \colon f^*h^* \To (hf)^*$. The 2-cell component of $\chi_{hf}$ is obtained as the unique 2-cell comparing two 1-cell lifts along a given cartesian 1-cell. Essentially every 2-cell isomorphism in this construction is obtained this way and all the the relevant axioms hold by uniqueness. For example, the invertible modification $\chi_{\beta\alpha}$ is described the same way and satisfies the axioms for a modification by uniqueness. 

Identity 1-cells are preserved up to pseudo-natural equivalence: Suppose $1_b \colon b \to b$ then since cartesian 1-cells are unique up to equivalence we get an equivalence ${\iota_{b}}_e$
\begin{equation*}
\cd[@C+10pt]
{
 e \ar[drr]^{1} \ar[d]_{{\iota_{b}}_e} & \urtwocell[0.6]{dl}{\hat{r}} &   \\
 1^*e \ar[rr]_{\c{1}{e}} &  & e
}
\hspace{1cm}\text{over}\hspace{1cm}
\cd[@C+16pt]
{
 b \ar[drr]^{1} \ar[d]_{1} & \urtwocell[0.6]{dl}{r} &   \\ 
b \ar[rr]_{1} & & b''
}
\end{equation*}
that is unique up to isomorphism. It is the 1-cell component of a transformation $\iota_{b} \colon (1_b)^* \To 1_{\E_b}$. The 2-cell component of $\iota_b$ and the modification $\iota_{1_b}$ are constructed in a similar way to that above.

Invertible modifications $\omega$, $\gamma$, $\delta$: As above, these are obtained using the 2-cell property for cartesian 1-cells. For ${\omega_{fgh}}_e$ we use 
$$ \cd[@C+24pt]{ f^*(g^*(h^*e)) \ar[r]^-{\c{f}{g^*(h^*e)}} & g^*(h^*e) \ar[r]^-{\c{g}{h^*e}} & h^*e \ar[r]^-{\c{h}{e}} & e }$$
and the appropriate liftings from the definitions of $\chi_{gf}$ et cetera. The other two are done in a similar way. We then use the uniqueness property to show that they satisfy the relevant axioms.

\end{Con} 

This gives us the following result.

\begin{Prop}
$F_P\colon \B^\coop \to \bicat$ is a trihomomorphism.
\end{Prop}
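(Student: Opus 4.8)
The construction above has already exhibited every piece of data of a trihomomorphism: the object function $b\mapsto\E_b$, the pseudo-functors $F_{bb'}\colon\B(b,b')\to\bicat(\E_{b'},\E_b)$ with their coherence isomorphisms $\phi_{\beta\alpha}$ and $\phi_f$; the adjoint equivalences $\chi$ and $\iota$ together with their modification components $\chi_{\beta\alpha}$ and $\iota_{1_a}$; and the invertible modifications $\omega$, $\gamma$, $\delta$. What remains is to check the coherence conditions: that each $F_{bb'}$ really is a pseudo-functor, that $\chi$ and $\iota$ really are transformations with genuine modification components, that $\omega$, $\gamma$, $\delta$ really are modifications, and that the two trihomomorphism axioms relating $\omega$, $\gamma$, $\delta$, $\iota$, $\chi$ hold. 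We work throughout under the standing assumption (Proposition \ref{prop:nicer_lifting} and the accompanying reduction) that $P$ preserves composition and identities strictly and that lifts along cartesian $1$-cells are chosen with $P\hat h=h$, so that no stray coherence isomorphisms of $P$ intrude into the calculations.

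The uniform strategy is the one already used for the $2$-functor $F_P$ in Section \ref{sec:strict}. Every $2$-cell occurring in the data of $F_P$ --- the components of $\phi_{\beta\alpha}$, of $\chi_{hf}$, of $\chi_{\beta\alpha}$, of $\iota_b$, of $\omega_{fgh}$, and so on --- is defined as \emph{the unique} $2$-cell lying over a specified coherence isomorphism of $\B$ and making a prescribed triangle commute once it is whiskered by a chosen cartesian $1$-cell, i.e.\ by a composite of the form $\c{f_1}{\ }\c{f_2}{\ }\cdots$; this is legitimate precisely because the relevant $2$-cell downstairs is invertible, by Corollary \ref{cor:unique_up_to_iso} and Proposition \ref{prop:isos_lift_to_isos_weak}. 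Consequently, to establish any one of the required coherence equations it suffices to whisker both sides by the appropriate composite of chosen cartesian $1$-cells, observe that the two resulting pastings of $2$-cells in $\E$ are equal, and then invoke uniqueness of the lift. After cancelling the common cartesian $1$-cells (using the cancellation property of cartesian maps, Proposition \ref{prop:left-2-out-of-3_weak}, together with strictness of $P$ so that the labels match) each such identity reduces to a coherence equation \emph{in the bicategory $\B$} --- an instance of the bicategory axioms for $\B$, or of naturality of the associativity and unit isomorphisms of $\B$ --- which holds by the coherence theorem for bicategories.

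Concretely I would proceed in the order: (1) $F_{bb'}$ is a pseudo-functor --- verify that $\phi_{\beta\alpha}$ is natural in $\beta,\alpha$ and that the two pseudo-functor axioms hold, repeating the $2$-functor argument but now checking each equation of $2$-cells after whiskering by $\c{h}{e}$; (2) $\chi$ is a transformation --- check pseudo-naturality of each $\chi_{hf}$ (whose $2$-cell component is the unique comparison of two $1$-cell lifts along a given cartesian $1$-cell, as in the statement just before Corollary \ref{cor:unique_up_to_iso}) and that $\chi_{\beta\alpha}$ satisfies the modification axiom; (3) the analogous checks for $\iota$ and $\iota_{1_a}$; (4) $\omega$, $\gamma$, $\delta$ are invertible modifications --- each component being a unique lift, the modification equations follow at once by uniqueness; (5) the two trihomomorphism axioms, where both sides, whiskered by the evident composite $\c{f}{\ }\c{g}{\ }\c{h}{\ }$ (resp.\ the composites appropriate to the unit axiom), become pastings in $\E$ that, after cancellation, reduce to the pentagon/triangle-type coherence identities for $\B$.

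The main obstacle is purely the bookkeeping in step (5): the two trihomomorphism axioms are large pasting diagrams built from $\omega$, $\gamma$, $\delta$, and expanding their components while tracking which chosen cartesian composite each universal property is taken relative to is long and unenlightening. No new idea is required, however, beyond ``whisker by the chosen cartesian composite, cancel, and apply coherence in $\B$ together with uniqueness of lifts'', so the heavier diagrams may be omitted. Finally, by Proposition \ref{chosen_2-cells_enough_weak} the verifications involving chosen cartesian $2$-cells suffice to settle the corresponding statements for all cartesian $2$-cells, which is what is used implicitly in the functoriality arguments above.
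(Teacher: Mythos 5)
Your proposal is correct and takes essentially the same route as the paper: the paper offers no separate proof of this proposition, instead folding the verification into Construction \ref{con:GC_inverse_weak} itself, where each coherence datum is defined as the unique lift (over a coherence isomorphism of $\B$) along a composite of chosen cartesian 1-cells, and every axiom is asserted to hold ``by uniqueness'' of such lifts — exactly the whisker-cancel-and-apply-coherence strategy you describe. Your write-up is, if anything, slightly more explicit than the paper about which uniqueness property is invoked at each stage.
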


\begin{Rk}[Fibres]
We defined the fibre over an object $b$ by insisting that each 1-cell and 2-cell sit exactly above $1_b$ and $1_{1_b}$. Then the composition is such that this actually forms a bicategory. We could give a different kind of fibre by asking that 1-cells only have $Pf \cong 1_b$ and that (the image of) 2-cells in the fibre commute with these isomorphisms. The construction would work either way. Our choice of fibre is more stable under the action of the Grothendieck construction and its pseudo-inverse.
\end{Rk}

To prove that the Grothendieck construction is surjective up to equivalence we will need the following two results.

\begin{Prop} \label{prop:1-cells_factor_weak}
Suppose $P \colon \E \to \B$ is a fibration. Every $f\colon x\to z$ in $\E$ can be factored as
\begin{equation*}
\cd[@C+16pt]{
x \ar[dr]^f \ar[d]_{\hat{f}}& \\
y \ar[r]_-{\c{Pf}{z}} \urtwocell[0.35]{ur}{\hat{l}}& z
}
\hspace{0.5cm}\text{over}\hspace{0.5cm}
\cd[@C+16pt]{
Px \ar[d]_{1_{Px}} \ar[dr]^{Pf} & \\
Px \ar[r]_-{Pf} \urtwocell[0.35]{ur}{l} & Pz
}
\end{equation*}
where $\hat{f}$ is unique up to unique isomorphism.
\end{Prop}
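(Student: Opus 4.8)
The plan is to read off the factorization directly from the defining lifting property of the chosen cartesian $1$-cell $\c{Pf}{z}$, exactly mirroring the $2$-categorical Proposition~\ref{prop:1-cells_factor_strict} but with the strict equality $f=\c{Pf}{z}\hat f$ replaced by a coherent isomorphism. Throughout I use the standing conventions (in force since the remark after Proposition~\ref{prop:nicer_lifting}) that every fibration preserves composition and identities strictly and admits the normalised lifts of Proposition~\ref{prop:nicer_lifting}. First I would assemble the data: writing $y=(Pf)^{*}z$, the $1$-cell $\c{Pf}{z}\colon y\to z$ is cartesian over $Pf\colon Px\to Pz$, and since $P\c{Pf}{z}=Pf$ the coherence isomorphism $l\colon Pf\cdot 1_{Px}\To Pf$ of $\B$ is an isomorphism $P\c{Pf}{z}\cdot 1_{Px}\To Pf$. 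Together with the $1$-cell $f\colon x\to z$, which lies over $Pf$, the pair $(1_{Px},l)$ is precisely the input required by Definition~\ref{def:cartesian1_weak}(1) for the cartesian $1$-cell $\c{Pf}{z}$ (taking $g=f$, $h=1_{Px}$, $\alpha=l$).

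Applying that lifting property and then normalising via Proposition~\ref{prop:nicer_lifting}, I obtain a $1$-cell $\hat f\colon x\to y$ with $P\hat f=1_{Px}$ together with an isomorphism $\hat l\colon \c{Pf}{z}\hat f\To f$. The compatibility equation of Definition~\ref{def:cartesian1_weak}(1) reads $\alpha\cdot P\c{Pf}{z}\hat\beta=P\hat l\cdot\phi$; here $\hat\beta=1_{1_{Px}}$ by the normalisation and $\phi$ is an identity by strictness, so it collapses to $P\hat l=l$. This is exactly the claimed factorization, lying over the right-hand triangle.

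For the uniqueness clause, suppose $(\hat f',\hat l')$ is another pair with $\hat l'\colon \c{Pf}{z}\hat f'\To f$ invertible, $P\hat f'=1_{Px}$ and $P\hat l'=l$. Then $(\hat f',\hat l',1_{1_{Px}})$ is again a lift of $(1_{Px},l)$ along $\c{Pf}{z}$ in the sense of Definition~\ref{def:cartesian1_weak}(1), so by Corollary~\ref{cor:unique_up_to_iso} there is a unique invertible $2$-cell $\theta\colon\hat f\To\hat f'$, which by that corollary satisfies $\hat l'\cdot(\c{Pf}{z}\,\theta)=\hat l$ and $P\theta=1_{1_{Px}}$. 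Hence $\hat f$ is unique up to a unique isomorphism.

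I do not expect a genuine obstacle here, since every step is a direct invocation of an earlier result. The only places demanding care are the bookkeeping of the coherence $2$-cell $\phi$ in the compatibility equation of Definition~\ref{def:cartesian1_weak}(1) --- trivialised by the standing strictness assumptions --- and verifying that the uniqueness supplied by Corollary~\ref{cor:unique_up_to_iso} is of precisely the asserted shape, namely a unique $2$-cell over the identity that is compatible with both $\hat l$ and $\hat l'$.
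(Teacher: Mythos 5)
Your proof is correct and follows essentially the same route as the paper: existence comes from feeding $(1_{Px},l)$ into the cartesian lifting property of $\c{Pf}{z}$ (with the standing normalisation so that $P\hat f=1_{Px}$ and $P\hat l=l$), and uniqueness up to a unique isomorphism over $1_{1_{Px}}$ is exactly the paper's appeal to the 2-cell lifting property applied to the commuting shell over $1_{Pf}$, which is what Corollary~\ref{cor:unique_up_to_iso} packages. No gaps.
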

\begin{proof}
The cartesian property of $\c{Pf}{z}$ gives a factorisation as shown above.
Suppose that there is another factorisation $(\hat{f}',\hat{l}')$ and consider the commuting shell formed with $1_{Pf}$ and $1_{1_{Px}}$ in the base. There is then a unique isomorphism $\tau\colon \hat{f} \cong \hat{f}'$ with $\hat{l} = \hat{l}'.\c{Pf}{z}\tau$ and $P\tau = 1_{1_{Px}}$.
\end{proof}

\begin{Prop} \label{prop:2-cells_factor_weak}
Suppose $P \colon \E \to \B$ is a fibration. Every $\alpha\colon f\To g\colon w\to z$ in $\E$ can be factored as
\begin{equation*}
\cd[@C+12pt]{
w \ar@/^48pt/[drrr]^{f} \ar@/_48pt/[drrr]_{g} \pair[10pt]{rr}{\hat{f}}{\hat{h}*\hat{g}} \dtwocell{rr}{\hat{\alpha}} \ar@/_4pt/[dr]|{\hat{g}} & & x \ar@/^4pt/[dr]|{\c{Pf}{z}\ \ \ } &  \\
 & y \pair{rr}{h}{\c{Pg}{z}} \dtwocell[0.3]{rr}{\c{P\alpha}{\c{g}{z}}}  \ar[ur]|{\hat{h}} & & z
}
\hspace{0.5cm}\text{over}\hspace{0.5cm}
\cd[@C+8pt]{
Pw \ar@/^48pt/[drrr]^{Pf} \ar@/_48pt/[drrr]_{Pg}  \pair[10pt]{rr}{1}{1} \ar@/_4pt/[dr]|{1} & & Pw \ar@/^4pt/[dr]|{Pf} &  \\
 & Pw \pair{rr}{Pf}{Pg} \dtwocell{rr}{P\alpha}  \ar[ur]|{1} & & Pz
}
\end{equation*}
where $\hat{\alpha}$ is unique up to choice of $\hat{f}$, $\hat{g}$ and $\hat{h}$. (Invertible 2-cells have been omitted in each diagram).
\end{Prop}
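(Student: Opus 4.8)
The plan is to run the strict argument of Proposition~\ref{prop:2-cells_factor_strict} again, inserting the coherence isomorphisms that the bicategory structure forces upon us; throughout I use the standing hypothesis (Proposition~\ref{prop:nicer_lifting}) that $P$ preserves composition and identities strictly and that chosen lifts sit exactly over their bases. Write $x=(Pf)^{*}z$ and $y=(Pg)^{*}z$ for the domains of the chosen cartesian $1$-cells. First I would factor the two $1$-cells: applying Proposition~\ref{prop:1-cells_factor_weak} to $f$ and to $g$ gives $\hat{f}\colon w\to x$ and $\hat{g}\colon w\to y$ over $1_{Pw}$, together with isomorphisms $\hat{l}_f\colon \c{Pf}{z}\circ\hat{f}\cong f$ and $\hat{l}_g\colon \c{Pg}{z}\circ\hat{g}\cong g$, each over a unit constraint of $\B$. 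Next, in the fibration $P_{yz}\colon \E(y,z)\to\B(Py,Pz)$ I would take the cartesian $2$-cell $\kappa=\c{P\alpha}{\c{Pg}{z}}\colon h\To\c{Pg}{z}$ lifting $P\alpha$ at $\c{Pg}{z}$, so that $h$ lies over $Pf$.

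The key transport step is to compare $\alpha$ with $\kappa$. Whiskering, $\kappa*1_{\hat{g}}\colon h\circ\hat{g}\To\c{Pg}{z}\circ\hat{g}$ is again cartesian---identity $2$-cells are cartesian, so this is a horizontal composite of cartesian $2$-cells and axiom~(3) of the definition of fibration applies---and it lies over $P\alpha*1_{1_{Pw}}$ since $P$ is strict. Setting $\beta:=(\hat{l}_g)^{-1}.\alpha.\hat{l}_f\colon \c{Pf}{z}\circ\hat{f}\To\c{Pg}{z}\circ\hat{g}$, naturality of the unit constraint gives $P\beta=P\alpha*1_{1_{Pw}}=P(\kappa*1_{\hat{g}})$ on the nose, so the cartesian property of $\kappa*1_{\hat{g}}$ yields a unique $\eta\colon \c{Pf}{z}\circ\hat{f}\To h\circ\hat{g}$ lying over an identity $2$-cell with $(\kappa*1_{\hat{g}}).\eta=\beta$.

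Then I would factor $h\colon y\to z$ itself (note $Ph=Pf$), obtaining $\hat{h}\colon y\to x$ over $1_{Pw}$ and an isomorphism $\hat{m}\colon \c{Pf}{z}\circ\hat{h}\cong h$ over a unit constraint; pasting $\hat{m}*1_{\hat{g}}$ with an associativity constraint gives an isomorphism $\zeta\colon \c{Pf}{z}\circ(\hat{h}\circ\hat{g})\cong h\circ\hat{g}$, so that $\zeta^{-1}.\eta\colon \c{Pf}{z}\circ\hat{f}\To\c{Pf}{z}\circ(\hat{h}\circ\hat{g})$ lies over a $2$-cell of $\B$ which, by coherence, is the unique constraint $2$-cell of its type. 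The $2$-cell lifting property of the cartesian $1$-cell $\c{Pf}{z}$ (Definition~\ref{def:cartesian1_weak}(2))---whose compatibility hypothesis is met automatically because the downstairs $2$-cell is a constraint---then produces a unique $\hat{\alpha}\colon \hat{f}\To\hat{h}\circ\hat{g}$ over $1_{1_{Pw}}$ with $1_{\c{Pf}{z}}*\hat{\alpha}=\zeta^{-1}.\eta$, and pasting $\hat{l}_f$, $\hat{\alpha}$, $\hat{m}$, $\kappa$, $\hat{l}_g$ back together reconstitutes $\alpha$ in the displayed form. For uniqueness: $\eta$ and $\hat{\alpha}$ are each pinned down by a universal property, $h$ and $\kappa$ are the chosen cartesian lift, and $\hat{l}_f,\hat{l}_g,\hat{m}$ are unique up to unique isomorphism (Proposition~\ref{prop:1-cells_factor_weak}); chasing these through shows that once $\hat{f},\hat{g},\hat{h}$ are fixed, $\hat{\alpha}$ is determined, which is exactly the asserted uniqueness.

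The hard part will be the coherence bookkeeping in the last two steps: one must check that every composite of lifted isomorphisms, and every residual $2$-cell of $\B$ that appears along the way, is precisely the unique constraint $2$-cell of its type, so that the compatibility hypotheses of the lifting properties of Definition~\ref{def:cartesian1_weak} hold without extra argument. This is where the coherence theorems for bicategories and homomorphisms do the real work, and where the heavier calculations the paper chooses to omit would sit.
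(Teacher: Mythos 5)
Your route is the paper's route: factor $f$ and $g$ through the chosen cartesian lifts, take the cartesian $2$-cell $\c{P\alpha}{\c{Pg}{z}}$ at $\c{Pg}{z}$, whisker it with $\hat{g}$ (cartesian by axiom (3)), use its cartesian property in the hom-fibration $P_{wz}$ to produce $\eta$ over an identity, factor $h$ as $\c{Pf}{z}\circ\hat{h}$, and finish with the $2$-cell lifting property of the cartesian $1$-cell $\c{Pf}{z}$. The construction of $\eta$ and the uniqueness discussion match the paper's essentially verbatim.

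There is, however, one concrete missing step. The statement requires $\hat{\alpha}\colon \hat{f}\To\hat{h}\,\hat{*}\,\hat{g}$, where $\hat{h}\,\hat{*}\,\hat{g}$ is the \emph{fibre} composite in $\E_{Pw}$ --- by Construction \ref{con:GC_inverse_weak} this is the domain of the cartesian lift of the unit constraint $r\colon 1_{Pw}\To 1_{Pw}.1_{Pw}$ at $\hat{h}.\hat{g}$, and it lies over $1_{Pw}$. Your $\hat{\alpha}$ instead lands in the ambient composite $\hat{h}\circ\hat{g}$, which lies over $1_{Pw}.1_{Pw}$; consequently the $2$-cell you obtain from Definition \ref{def:cartesian1_weak}(2) sits over the constraint $1_{Pw}\To 1_{Pw}.1_{Pw}$, not over $1_{1_{Pw}}$ as you assert, and its codomain is not a $1$-cell of the fibre. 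This matters downstream: Proposition \ref{prop:H_is_equiv_weak} uses this factorisation to match $2$-cells of $\el F_P$, whose second components must live in $\E_x$. The repair is exactly the step the paper inserts --- paste one further cartesian $2$-cell $\c{r}{\hat{h}.\hat{g}}$ (whiskered with $\c{Pf}{z}$) onto your $\zeta^{-1}.\eta$ before applying the lifting property of $\c{Pf}{z}$, so that the target becomes $\c{Pf}{z}\circ(\hat{h}\,\hat{*}\,\hat{g})$ and the base $2$-cell becomes $Pf.1_{1}$, whence $\hat{\alpha}$ genuinely lies over $1_{1_{Pw}}$. With that insertion your argument agrees with the paper's.
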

\begin{proof}
The structure of this proof is the same as Proposition \ref{prop:2-cells_factor_strict}.
Begin by factoring $f \cong \c{Pf}{z}\cdot\hat{f}$ and $g \cong \c{Pg}{z}\cdot\hat{g}$ using Proposition \ref{prop:1-cells_factor_weak}.
Now take the cartesian lift of $P\alpha$ at $\c{Pg}{z}$.
Then $\c{P\alpha}{\c{Pg}{z}}\hat{g}$ is cartesian over $P\alpha.1$ and
\begin{equation*}
\cd[@C-4pt]
{
 \c{Pf}{z}\hat{f} \ar@{=>}[r]^-{\tau_f} \ar@{:>}[d]_{\eta} & f \ar@{=>}[r]^{\alpha} & g \ar@{=>}[d]^{\tau_g}   \\
 h\hat{g} \ar@{=>}[rr]_-{\c{P\alpha}{\c{Pg}{z}}\hat{g}} & &  \c{Pg}{z}\hat{g}
}
\hspace{0.5cm}\text{over}\hspace{0.5cm}
\cd[@C-4pt]
{
 Pf.1 \ar@{=>}[r]^{r}  \ar@{=>}[d]_{1} & Pf \ar@{=>}[r]^{P\alpha} & Pg \ar@{=>}[d]^{r}   \\
 Pf.1 \ar@{=>}[rr]_{P\alpha.1} & &  Pg.1
}
\end{equation*}
so there exists a unique $\eta \colon \c{Pf}{z}\hat{f} \To h\hat{g}$ with $P\eta = 1_{Pf.1}$ and
\begin{equation*}
\cd[@C+6pt@R+6pt]
{
w \ar@/^6pt/[rr]^{\hat{f}} \ar@/_6pt/[dr]_{\hat{g}} \pair{drrr}{f}{g} \dtwocell{drrr}{\alpha} & & y \ar@/^6pt/[dr]^{\c{Pf}{z}} &  \\
 & x \ar@/_6pt/[rr]_{\c{Pg}{z}}  & & z
}
=
\cd[@C+6pt@R+6pt]
{
w \ar@/^6pt/[rr]^{\hat{f}} \ar@/_6pt/[dr]_{\hat{g}} & & \dtwocell{dl}{\eta} y \ar@/^6pt/[dr]^{\c{Pf}{z}} &  \\
 & x \pair[10pt]{rr}{h}{\c{Pg}{z}} \dtwocell[0.35]{rr}{\c{P\alpha}{\dots}} & & z
}.
\end{equation*}
By Proposition \ref{prop:1-cells_factor_weak} we factor $h \cong \c{Pf}{z}\cdot\hat{h}$ where $P\hat{h} = 1_{Px}$.
We then form the fibre-composite of $\hat{h}$ and $\hat{g}$ by lifting the isomorphism $1\cong 1.1$. Finally, we observe that
\begin{equation*}
\cd[@C+6pt@R-4pt]
{
 \c{Pf}{z}\hat{f} \ar@{=>}[r]^{\eta} \ar@{=>}[dd]_{1} & h\hat{g} \ar@{=>}[d]^{\tau_h \hat{g}}   \\
 & \c{Ph}{z}\hat{h}\hat{g} \ar@{=>}[d]^{\c{Ph}{z}\c{r}{-}} \\
 \c{Pf}{z}\hat{f} \ar@{:>}[r]_-{\c{Ph}{z}\hat{\alpha}} & \c{Ph}{z}(\hat{h}\hat{*}\hat{g})
}
\hspace{0.5cm}\text{over}\hspace{0.5cm}
\cd[@C+24pt@R-4pt]
{
 Pf.1 \ar@{=>}[r]^{1} \ar@{=>}[dd]_{1} & Ph.1 \ar@{=>}[d]^{r1}   \\
 & Pf.1.1 \ar@{=>}[d]^{Pfr} \\
 Pf.1 \ar@{=>}[r]_{Pf.1_1} & Pf.1
}
\end{equation*}
so there exists a unique $\hat{\alpha}\colon \hat{f} \To \hat{h}\hat{*}\hat{g}$ over $1_{1_{Pw}}$ with $\c{Ph}{z}\c{r}{1}.\tau_h\hat{g}.\eta = \c{Pf}{z} \hat{\alpha}$ and hence a decomposition of $\alpha$ as stated.

Suppose that there was another decomposition of $\alpha$ that obtained $\hat{\alpha}'$ using $\hat{f}'$, $\hat{g}'$ and $\hat{h}'$. Then there exist unique $\tau_f$, $\tau_g$ and $\tau_h$ as in Proposition \ref{prop:1-cells_factor_weak}. Using the fact that $\hat{\alpha}'$ is unique, we find that $\hat{\alpha}' = (\tau_h\hat{*}\tau_g).\hat{\alpha}.\tau_f$.
\end{proof}

\begin{Con}[Surjective up to biequivalence]
In order to show that the Grothendieck construction is surjective up to biequivalence we need to find for every $P$ a biequivalence of fibrations 
\begin{equation*}
\cd[@C-1pt]{
\el{F_P} \ar[rr]^H \ar[dr]_{\pi}& & \ar[dl]^{P} E \\
 & B &
}.
\end{equation*}
First, what is $\el F_P$? Its data consists of:
\begin{enumerate}[1\emph{-cells:}]
\setcounter{enumi}{-1}
\item pairs $(x,\e{x})$ where $\e{x} \in \E$ and $P\e{x}=x$.
\item pairs $(f,\e{f})\colon (x,\e{x}) \to (y,\e{y})$ where $f\colon x\to y$ in $\B$ and $\e{f}~\colon~\e{x}\to~f^*(\e{y})$ in $\E_{x}$.
\item pairs $(\alpha,\e{\alpha})\colon (f,\e{f}) \To (g,\e{g})\colon (x,\e{x}) \to (y,\e{y})$ where $\alpha\colon f\To g$ in $\B$ and $\e{\alpha}\colon \e{f} \To \alpha^*_{\e{y}}\hat{*}\e{g}$ in $\E_x$.
\end{enumerate}
\begin{equation*}
\cd[@!C@C+8pt@R-4pt]{
\e{x} \pair[9pt]{rr}{\e{f}}{\alpha^*_{\e{y}}*\e{g}} \ar[dr]_{\e{g}} \dtwocell{rr}{\e{\alpha}}  & & f^*({\e{y}}) \\
& g^*(\e{y}) \ar[ur]_{\alpha^*_{\e{y}}} &
}
\end{equation*}

Then $H\colon \el F_P \to \E$ is defined on 0-cells by $H(x,\e{x})=\e{x}$.
On 1-cells by $H(f,\e{f}) = r^*(\varphi.\e{f})$, the domain of the cartesian lift of $r\colon f \To f.1$ at
\begin{equation*}
\cd[@C+16pt]{
\e{x} \ar[r]^{\e{f}} & f^*\e{y} \ar[r]^{\c{f}{\e{y}}} & \e{y}
}.
\end{equation*}
On 2-cells $H(\alpha,\e{\alpha})$ is the composite
\begin{equation*}
\cd[@C+10pt]{
\e{x} \ar@/^48pt/[drrr]^{H(f,\e{f})} \ar@/_48pt/[drrr]_{H(g,\e{g})} \pair{rr}{\e{f}}{\alpha^*_\e{y}*\hat{g}} \dtwocell{rr}{\e{\alpha}} \ar@/_4pt/[dr]|{\e{g}} & & f^*\e{y} \ar@/^4pt/[dr]|{\c{f}{\e{y}}\ } &  \\
 & y \pair{rr}{h}{\c{g}{\e{y}}} \dtwocell[0.35]{rr}{\c{\alpha}{\c{g}{\e{y}}}}  \ar[ur]|{\alpha^*_\e{y}} & & \e{y}
}.
\end{equation*}
The coherence isomorphisms are the unique fillers in the following diagrams.
\begin{equation*}
\cd[@C+0pt]{
H(g,\e{g}).H(f,\e{f}) \ar@{=>}[r]^-{\hat{r}*\hat{r}} \ar@{:>}[dd]_{} & \c{Pg}{\ }.\e{g}.\c{Pf}{\ }.\e{f} \ar@{=>}[d]^{\c{Pg}{\ }.\cong.\e{f}} \\
& \c{Pg}{\ }.\c{Pf}{\ }.f^*(\e{g}).\e{f} \ar@{=>}[d]^{\cong.f^*(\e{g}).\e{f}} \\
H((g,\e{g}).(f,\e{f})) \ar@{=>}[r]_-{\hat{r}} & \c{Pgf}{\ }.\chi.f^*(\e{g}).\e{f}
}
\end{equation*}
\begin{equation*}
\cd[@C+0pt]{
H((1,(i_x)_{\e{x}}) \ar@{=>}[r]^{\hat{r}} \ar@{:>}[d]_{} & \c{P1}{\ } (i_x)_{\e{x}} \ar@{=>}[d]^{\cong} \\
1_{H(x,\e{x})}   \ar@{=>}[r]_{1} & 1_{\e{x}}
}
\end{equation*}
Note that the coherence maps $\chi$ and $\iota$ are those given by the inverse to the Grothendieck construction.
\end{Con}
 	
\begin{Prop} \label{prop:H_is_equiv_weak}
The functor $H$ is a cartesian biequivalence.
\end{Prop}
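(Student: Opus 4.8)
The plan is to establish three properties of $H$, each modelled on the strict case (Proposition \ref{H_is_iso}): that $\pi = PH$ and $H$ preserves cartesian maps; that $H$ is surjective on $0$-cells; and that every local functor $H_{(x,\e{x}),(y,\e{y})}\colon \el F_P((x,\e{x}),(y,\e{y})) \to \E(\e{x},\e{y})$ is an equivalence of categories. Since $H$ is a homomorphism (as equipped by the Construction above), these together say precisely that $H$ is a cartesian biequivalence.

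First I check $\pi = PH$. On $0$-cells this is immediate, since $H(x,\e{x})=\e{x}$ and $P\e{x}=x$. On a $1$-cell $(f,\e{f})$, the value $H(f,\e{f})=r^*(\varphi.\e{f})$ is the domain of the cartesian lift of $r\colon f\To f.1$ at $\c{f}{\e{y}}.\e{f}$; because $P$ preserves composition strictly and (by Proposition \ref{prop:nicer_lifting}) lifts along cartesian $1$-cells may be taken to lie strictly over, $PH(f,\e{f})=f=\pi(f,\e{f})$, and the computation for $2$-cells is the same. For the cartesian property: $H$ sends the chosen cartesian $1$-cell $\c{f}{(y,\e{y})}=(f,1_{f^*\e{y}})$ of $\el F_P$ to a $1$-cell isomorphic to $\c{f}{\e{y}}$, hence cartesian by Proposition \ref{weak_1-cells_iso_cartesian}, and it sends the chosen cartesian $2$-cell $\c{\alpha}{(g,\e{g})}$ to a $2$-cell isomorphic to the chosen cartesian lift $\c{\alpha}{\c{g}{\e{y}}}$ of $\alpha$ in $\E$, again cartesian. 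As $\pi$ and $P$ are fibrations, Proposition \ref{chosen_2-cells_enough_weak}(2) promotes this to preservation of all cartesian $1$- and $2$-cells, so $H$ is cartesian.

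Surjectivity on $0$-cells is clear, since for every $e\in\E$ the pair $(Pe,e)$ lies in $\el F_P$ and $H(Pe,e)=e$. It remains to show each
$$H_{(x,\e{x}),(y,\e{y})}\colon \el F_P\big((x,\e{x}),(y,\e{y})\big) \longrightarrow \E(\e{x},\e{y})$$
is an equivalence. Its essential surjectivity is Proposition \ref{prop:1-cells_factor_weak}: given $g\colon\e{x}\to\e{y}$, put $f=Pg$ and factor $g\cong\c{f}{\e{y}}.\hat{g}$ with $\hat{g}$ in the fibre $\E_x$; then $(f,\hat{g})$ is an object of $\el F_P((x,\e{x}),(y,\e{y}))$ with $H(f,\hat{g})=r^*(\varphi.\hat{g})\cong\c{f}{\e{y}}.\hat{g}\cong g$. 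Full faithfulness is Proposition \ref{prop:2-cells_factor_weak}: a morphism $(\alpha,\e{\alpha})\colon(f,\e{f})\To(g,\e{g})$ of $\el F_P$ is exactly the datum of an $\alpha\colon f\To g$ in $\B$ together with a fibre $2$-cell $\e{\alpha}\colon\e{f}\To\alpha^*_{\e{y}}\hat{*}\e{g}$, and the factorisation of Proposition \ref{prop:2-cells_factor_weak} — through the chosen factorisations of the $1$-cells $H(f,\e{f})$ and $H(g,\e{g})$ and through $\c{\alpha}{\c{g}{\e{y}}}$ — identifies this datum with an arbitrary $2$-cell $H(f,\e{f})\To H(g,\e{g})$ in $\E$, the uniqueness clause of that proposition making the correspondence a bijection. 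Hence each $H_{(x,\e{x}),(y,\e{y})}$ is an equivalence and $H$ is a biequivalence.

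I expect the last step to be the main obstacle: turning a $2$-cell of $\E$ into the pair $(\alpha,\e{\alpha})$ via Proposition \ref{prop:2-cells_factor_weak} forces one to chase the coherence isomorphisms of $F_P$, of $H$ and of $\E$ through the factorisation, and to verify that the $1$-cell it produces is (up to the relevant isomorphism) the component $\alpha^*_{\e{y}}$ of the transformation $\alpha^*$ and that the leftover fibre $2$-cell is $\e{\alpha}$. This is the bicategorical counterpart of what made $H$ a strict isomorphism in Proposition \ref{H_is_iso}, but with every equality there replaced by an invertible $2$-cell the bookkeeping is considerably heavier; accordingly I would present only the conceptual structure and leave these verifications to the diagrams.
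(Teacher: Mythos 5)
Your proposal is correct and follows essentially the same route as the paper: verify $\pi = PH$, deduce cartesianness from preservation of chosen cartesian maps via Proposition \ref{chosen_2-cells_enough_weak}, note surjectivity on objects, and obtain the local equivalences from the factorisation results, Propositions \ref{prop:1-cells_factor_weak} and \ref{prop:2-cells_factor_weak}. The paper is equally brief about the coherence bookkeeping you flag at the end, so nothing is missing relative to its own level of detail.
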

\begin{proof}
First,
\begin{align*}
RF_P(x,\e{x})   = x   &  = PH(x,\e{x}) \\
RF_P(f,\e{f})   = f   &  = PH(f,\e{f}) \\
RF_P(\alpha,\e{\alpha}) = \alpha &  = PH(\alpha,\e{\alpha})
\end{align*}
so $PH = RF_P$.

Second, the chosen cartesian maps in $\el F_P$ are those with identities in the second component.
Since $H$ acts by post-composition with cartesian maps it is cartesian on chosen cartesian maps, thus $H$ is cartesian.

Third, for every $e\in \E$ there exists $(Pe,e)\in \el F_P$ with $H(Pe,e) = e$ so $H$ is surjective on objects. 
Then each 1-cell in the image of $H$ is a composite of a factorisation according to Proposition \ref{prop:1-cells_factor_weak}. Since such factorisations are unique up to isomorphism, $H$ is surjective up to isomorphism on 1-cells. Finally, all 2-cells in the image of $H$ are composites of factorisations according to Proposition \ref{prop:2-cells_factor_weak}. Since such factorisations are unique (up to the given factorisation of the 1-cells), $H$ is appropriately bijective on 2-cells. Thus $H$ is a biequivalence.
\end{proof}

\begin{Thm}
\label{thm:GC_weak}
The Grothendieck construction is the action on objects of a triequivalence
$$\el\colon [\B^\coop, \bicat] \to \Fib(\B).$$

\end{Thm}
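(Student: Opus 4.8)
The plan is to mirror the proof of Theorem~\ref{Thm:GC1}, systematically replacing ``isomorphism'' by ``(bi)equivalence'' and the strict notions (2-functor, 2-natural transformation, modification) by the corresponding weak ones (tritransformation, trimodification, perturbation). By Proposition~\ref{prop:H_is_equiv_weak} the functor $\el$ is already surjective up to biequivalence on objects, so it suffices to extend $\el$ to a trihomomorphism and then to check that it is a \emph{local biequivalence}, i.e.\ that for each pair $F,G$ of trihomomorphisms $\B^\coop\to\bicat$ the induced homomorphism of bicategories
$$[\B^\coop,\bicat](F,G)\longrightarrow\Fib(\B)(\el F,\el G)$$
is a biequivalence. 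Being surjective up to biequivalence on objects together with being a local biequivalence is exactly what it means to be a triequivalence.

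First I would define $\el$ on higher cells. Given a tritransformation $\eta\colon F\To G$, with pseudo-functor components $\eta_x\colon Fx\to Gx$, pseudo-natural equivalences $\eta_f$ and the invertible modifications of its structure, set $\el\eta\colon\el F\to\el G$ by $(x,\e{x})\mapsto(x,\eta_x\e{x})$ on $0$-cells, $(f,\e{f})\mapsto(f,(\eta_f)_{\e{y}}\cdot\eta_x\e{f})$ on $1$-cells and analogously on $2$-cells, so that $\pi\cdot\el\eta=\pi$ strictly; the coherence data making $\el\eta$ a homomorphism and the data making it pseudo-natural are read off directly from the structure of $\eta$, and $\el\eta$ is cartesian because the chosen cartesian $1$- and $2$-cells of $\el F$ have identity second component, on which $\el\eta$ acts trivially. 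A trimodification $\Gamma\colon\eta\Rrightarrow\epsilon$ is sent to the vertical transformation $\el\Gamma$ with $(\el\Gamma)_{(x,\e{x})}=(1_x,(\Gamma_x)_{\e{x})}$, and a perturbation to the vertical modification obtained the same way. Verifying that these assignments respect all composites and coherences up to the required invertible $3$-cells --- that is, that $\el$ is genuinely a trihomomorphism --- is routine but lengthy, following the pattern already used in Construction~\ref{con:GC_inverse_weak}: every comparison $2$- and $3$-cell in play is forced by the universal property of a cartesian cell.

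For the local biequivalence I would argue in both directions. Given a cartesian homomorphism $\eta\colon\el F\to\el G$ over $\B$, define $\bar\eta\colon F\To G$ on $Fx$ by $\bar\eta_x(a)=\pi_2(\eta(x,a))$, and correspondingly on $1$- and $2$-cells of $Fx$ by applying the second projection of $\eta$ to cells lying over the identities $1_x$, $1_{1_x}$; that $\eta$ is a cartesian homomorphism forces $\bar\eta$ to assemble into a tritransformation, its coherence equivalences and modifications being extracted from those of $\eta$ restricted to the fibres. One then exhibits an invertible vertical transformation $\el(\bar\eta)\To\eta$, which gives essential surjectivity on $1$-cells; restricting vertical transformations and vertical modifications in the same fashion yields essential surjectivity on $2$-cells and a bijection between perturbations $\Gamma\to\Lambda$ and vertical modifications $\el\Gamma\to\el\Lambda$ on $3$-cells (the last because vertical modifications are determined by their fibre components). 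These three facts make each hom-homomorphism biessentially surjective on objects and locally an equivalence of categories, hence a biequivalence.

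The main obstacle is coherence bookkeeping rather than any single conceptual difficulty: showing that the data extracted from a cartesian homomorphism $\eta$ really satisfies the two tritransformation axioms, and that $\el$ preserves the tricategory structure up to the required invertible perturbations, both involve large pasting diagrams in which the many omitted isomorphisms attached to $F$, $G$ and to the lifts along cartesian $1$-cells must be tracked. As throughout Section~\ref{sec:weak}, each such identity holds by the uniqueness clause in Definition~\ref{def:cartesian1_weak}: once both sides are recognised as lifts of the same base datum along the same cartesian cell, they must coincide. The proof therefore reduces to repeated application of that principle, and the explicit diagrams are omitted.
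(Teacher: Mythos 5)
Your proposal follows essentially the same route as the paper: object-surjectivity via Proposition~\ref{prop:H_is_equiv_weak}, extension of $\el$ to tritransformations, trimodifications and perturbations by acting on second components, and local biequivalence by restricting cartesian homomorphisms (and vertical transformations/modifications) to fibres and projecting, with surjectivity up to equivalence on 1-cells, up to isomorphism on 2-cells, and bijectivity on 3-cells. The only wrinkle is a typing detail you would catch when writing it out: the second component of $\el\Gamma_{(x,\e{x})}$ must land in $G1_x(\epsilon_x\e{x})$, so it is $\iota\cdot(\Gamma_x)_{\e{x}}$ rather than $(\Gamma_x)_{\e{x}}$ alone, and dually the inverse assignment composes with $\iota^\centerdot$.
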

\begin{proof}
We have already shown that on objects $\el$ is surjective up to biequivalence (Proposition \ref{prop:H_is_equiv_weak}). Showing that it is locally a biequivalence requires many pages of verification. We will present most of the required data but omit many of the details.

Suppose $\eta \colon F \To G$ is a tritransformation in $[\B^\coop,\bicat]$. We define $\el{\eta} \colon \el F \to \el G$ by
$\el{\eta}(x,\e{x})  = (x, \eta_x(\e{x}))$ on objects and $\el{\eta}(f,\e{f}) = (f, {\eta_f}_{\e{y}}.\eta_x(\e{f}))$ on 1-cells. The first component of $\el{\eta}(\alpha,\e{\alpha})$ is $\alpha$ and the second is the composite
\begin{equation*}
\cd[@R+12pt]
{
\eta_x(\e{x}) \ar[r]^{\eta_x(\e{f})} \ar[dr]_{\eta_x(\e{g})} & \eta_xFf(\e{y}) \ar[r]^{{\eta_f}_\e{y}} & Gf\eta_y(\e{y}) \\
 \dtwocell[0.6]{ur}{\eta_x(\e{\alpha})}  & \eta_xFg(\e{y}) \ar[u]_{\eta_xF\alpha_\e{y}} \ar[r]_{{\eta_g}_\e{y}} & Gg \eta_y(\e{y}) \ar[u]_{{G\alpha \eta_y}_\e{y}} \dtwocell{ul}{{\eta_\alpha}_\e{y}}
}.
\end{equation*}
The coherence isomorphisms $\phi_{fg}$ have first component $1_{fg}$ and second component
\begin{equation*}
\hbox to \textwidth{\hss $
\cd[@R+12pt]
{
\eta_x(\e{x}) \ar[d]^{\eta_x(\e{f})} & & & & \\
 \eta_xFf(\e{y}) \ar[dr]_{\eta_x Fg(\e{f})} \ar[r]^{{\eta_f}_\e{y}} & Gf\eta_y(\e{y}) \ar[r]^-{Gf(\e{g})} & Gf\eta_y Fg(\e{z}) \ar[r]^{Gf{\eta_g}_y} & GfGg\eta_z(\e{z}) \ar[r]^{\chi 1} & Ggf \eta_z(\e{z}) \\
 & \eta_x FgFf \ar[ur]_{\eta_f Fg} \ar[rr]_{\eta_x \chi} \dtwocell{u}{\eta_f} & & \eta_x Fgf \ar[ur]^{\eta_{gf}} \dtwocell{ul}{\Pi} \ar[r]_{\eta_{gf}} & Ggf \eta_z \ar[u]_{{G1_{\eta_z}}_{\e{z}}} \twocong[0.3]{ul}
}
$ }
\end{equation*}
where $\Pi$ is part of the data of a tritransformation.
The coherence isomorphisms $\phi_{x}$ have first component $1_{1_x}$ and second component
\begin{equation*}
\cd[]
{
\eta_x(\e{x}) \ar[dr]_{\iota_{\eta_x(\e{x})}} \ar[r]^{\eta_x(i)} & \eta_xF1(\e{x}) \ar[r]^{\eta_1} & G1\eta_x(\e{x}) \\
 & G1\eta_x(\e{x}) \ar[ur]_{{G1\eta_x}_\e{x}} \dtwocell{u}{M} &
}.
\end{equation*}
$M$ is part of the data of a tritransformation.

This is a cartesian homomorphism from $P_F$ to $P_G$ and defines the action of $\el$ on 1-cells.

Suppose $\Gamma \colon \eta \Rrightarrow \epsilon$ is a trimodification in $[\B^\coop,\bicat]$. We define a transformation $\el{\Gamma} \colon \el{\eta} \to \el{\epsilon}$. The first component of $\el{\Gamma}_{(x,\e{x})}$ is $1_x$ and second component is
\begin{equation*}
\cd[@R+12pt]
{
\eta_x(\e{x}) \ar[r]^{{\Gamma_x}_\e{x}}  & \tau_x(\e{x}) \ar[r]^{i} & G1\tau_x(\e{x})
}.
\end{equation*}
The first component of $\el{\Gamma}_{(f,\e{f})}$ is $lr$ and second component is
\begin{equation*}
\hbox to \textwidth{\hss $
\cd[@R+12pt]
{
\eta_x(\e{x}) \ar[r]^{\Gamma_x} \ar[dr]_{\eta_x(\e{f})} & \tau_x(\e{x}) \ar[dr]^{\tau_x(\e{f})} \ar[r]^{\iota \tau} & G1\tau_x(\e{x}) \ar[r]^-{G1\tau_x(\e{f})} & G1\tau_xFf(\e{y}) \ar[r]^{G1 \tau_f} & G1Gf\tau_x(\e{y}) \ar[r]^{\chi 1} & Gf1\tau_x(\e{y}) \\
  & \eta_xFf(\e{y}) \twocong{u} \ar[r]^{\Gamma_xFf} \ar[dr]_{\eta_f} & \tau_xFf(\e{y}) \twocong{u} \ar[ur]_{\iota \tau Ff} \ar[dr]^{\tau_f} &&& Gf\tau_y(\e{y}) \ar[u]|{Gr\tau_{\e{y}}} \dtwocell{ul}{\delta} \\
 & & Gf\eta_y(\e{y}) \ar[r]_{Gf\Gamma_y} \dtwocell{u}{m} & Gf\tau_y(\e{y}) \twocong{uu} \ar[urr]^{1} \ar[uur]_{\iota Gf\tau} \ar[r]_{Gf\iota \tau_y} & GfG1\tau_y(\e{y})  \ar[r]_{\chi 1} & G1f \tau_y(\e{y}) \ar[u]|{Gl\tau_{\e{y}}} \ar@/_24pt/[uu]_{{Glr\tau_y}_{\e{y}}} \dtwocell{ul}{\gamma}
}
$ \hss }
\end{equation*}
where $m$ is part of the data of a trimodification. The unlabelled isomorphisms are associated with $\iota$.
This is a vertical transformation from $\el{\eta}$ to $\el{\tau}$ and defines $\el$ on 2-cells.

Suppose $\zeta \colon \Gamma\to\Lambda$ is a perturbation in $[\B^\coop,\bicat]$. We define a modification $\el{\zeta} \colon \el{\Gamma}\to\el{\Lambda}$. The first component of $\el{\zeta}_{(x,\e{x})}$ is $1_{1_x}$ and the second component is
\begin{equation*}
\cd[@R+12pt@C+10pt]
{
\eta_x(\e{x}) \pair{r}{\Gamma_x}{\Lambda_x} \dtwocell{r}{\zeta_x} & \tau_x(\e{x}) \ar[r]^{\iota } \ar[dr]_{\iota } & G1\tau_x(\e{x}) \\
 & \twocong[0.6]{ur} & G1\tau_x(\e{x}) \pair{u}{1}{{G1_1\tau_x}_\e{x}} \twocong{u}
}.
\end{equation*}
This is a vertical modification from $\el{\Gamma}$ to $\el{\Lambda}$ and defines $\el$ on 3-cells.

It can be verified that $\el$ is a trihomomorphism.

Suppose $\alpha \colon \el F \to \el G$ is a cartesian homomorphism. We will define a tritransformation $\hat\alpha \colon F \To G$ with $\el\hat\alpha \simeq \alpha$. This means homomorphisms $\hat\alpha_x \colon Fx \to Gx$ and pseudo-natural equivalences
$\hat\alpha_x . F(\mhyphen) \To  G(\mhyphen).\hat\alpha_y$
\begin{equation*}
\cd[@R+12pt]
{
\hat\alpha_x . Fg \ar@{=>}[d]_{\hat\alpha_x F\gamma} \ar@{=>}[r]^{\hat\alpha_g} & Gg.\hat\alpha_y \ar@{=>}[d]^{G\gamma\hat\alpha_x} \\
\hat\alpha_x . Ff \ar@{=>}[r]_{\hat\alpha_f} \urthreecell{ur}{\hat\alpha_\gamma} & Gf.\hat\alpha_y
}
\end{equation*}
together with two invertible modifications $\Pi$ and $M$.

Suppose $\sigma \colon h \To k \colon a \to b$ in $Fx$. Define $\hat \alpha_x \colon Fx \to Gx$ to be the composite $Fx \to \el F_x \to \el G_x \to Gx$ where $\el F_x$ is the fibre over $x$. The first map sends 
\begin{equation*}
\cd[@C+12pt]
{
 a \pair{r}{h}{k} \dtwocell{r}{\sigma} & b
}
\mapsto
\cd[@C+12pt]
{
 (x,a) \pair{r}{(1_x,\iota.h)}{(1_x,\iota.k)} \dtwocell[0.3]{r}{(1_{1_x},\delta)} &  (x,b)
}
\hspace{0.5cm}\text{where $\delta$ is}\hspace{0.5cm}
\cd[@C+12pt]
{
a \ar[r]^h \ar[dr]_k & b \ar[r]^-{\iota_b} \ar@{<-}[d]^{1} & F1(b) \ar@{<-}[d]^{F(1_1)_b} \\
   \drtwocell[0.65]{ur}{\sigma}   & b \ar[r]_-{\iota_b} \drtwocell{ur}{\iota_{1_b}} & F1(b) 
}.
\end{equation*}
The second map is $\alpha$ restricted to fibres and the final map is $\pi_2$, the projection onto the second component.
We choose not to include descriptions of $\hat\alpha_f$, $\hat\alpha_\sigma$, $\Pi$ or $M$.

We find that $\alpha(x,\e{x})$ equals $\el\hat\alpha(x,\e{x})$ and that $1_{\alpha(x,\e{x})}$ are the 1-cell components of a pseudo-natural equivalence. Thus $\el$ is locally surjective up to equivalence.

Suppose $\Gamma \colon \el\alpha \to \el \beta$ is a vertical transformation. We will define a trimodification $\hat\Gamma \colon \alpha \To \beta$ with $\el\hat\Gamma \cong \Gamma$. This means transformations $\hat\Gamma_x \alpha_x \To \beta_x$ together with invertible modifications $m$.

Remember that the action of $\el\alpha$ on objects is $\el\alpha(x,\e{x})=(x,\alpha_x(\e{x}))$. Thus the second component of $\Gamma_{(x,\e{x})}\colon (x,\alpha_x(\e{x})) \to (x,\beta_x(\e{x}))$ is a 1-cell $\alpha_x(\e{x}) \to G1\beta_x(\e{x})$. Then let $(\hat\Gamma_x)_\e{x}$ be this 1-cell composed with $\iota^\centerdot \colon G1\beta_x(\e{x}) \to \beta_x(\e{x})$. We choose not to include descriptions of $(\hat\Gamma_x)_f$ or $m$.

We find that $(\el \hat\Gamma)_{(x,\e{x})} = (1_x,\iota.(\hat\Gamma_x)_\e{x}) = (1_x,\iota.\iota^\centerdot.\pi_2\Gamma_{(x,\e{x})}) \cong (1_x,\pi_2\Gamma_{(x,\e{x})}) = \Gamma_{(x,\e{x})}$ and that this invertible 2-cell is the component of a modification. Thus $\el$ is surjective up to isomorphism on 2-cells.

Suppose $\zeta \colon \el\Gamma \to \el \Lambda$ is a vertical modification. We will define a perturbation $\hat\zeta \colon \Gamma \To \Lambda$ with $\el\hat\zeta = \zeta$. This means modifications $\hat\zeta_x \colon \Gamma_x \Rrightarrow \Lambda_x$ satisfying the appropriate axioms.

The 2-cell components of $\zeta_{(x,\e{x})}$ have first component $1_{1_x}$ and second component a 2-cell pictured on the left. 
We define $\hat\zeta$ by letting $(\hat\zeta_x)_\e{x}$ equal the pasting given on the right.
\begin{equation*}
\cd[@C+24pt]{
\alpha_x(\e{x}) \ar[r]^-{\iota\Gamma_x} \ar[dr]_{\iota\Lambda_x} & G1\beta_x(\e{x}) \\
 \drtwocell[0.65]{ur}{} & G1\beta_x(\e{x}) \ar[u]_{G1_1}
}
\hspace{1cm}
\cd[]{
\alpha_x(\e{x}) \ar@/^16pt/[rr]^{\Gamma_x} \ar@/_36pt/[drr]_{\Lambda_x} \ar[r]^{\iota\Gamma_x} \ar[dr]_{\iota\Lambda_x} & G1\beta_x(\e{x}) \ar[r]^{\iota^\centerdot} & \beta_x(\e{x}) \\
 \drtwocell[0.65]{ur}{} & G1\beta_x(\e{x}) \ar[u]_{G1_1}  \ar[r]_{\iota^\centerdot} & \beta_x(\e{x}) \ar[u]_{1} \drtwocell{ul}{\iota}
}
\end{equation*}
This makes $\hat\zeta$ a perturbation and it is unique with the property that $\el\hat\zeta = \zeta$. Thus $\el$ is bijective on 3-cells.

\end{proof}

\begin{Rk}[Variations on the Grothendieck construction] \label{rk:variation_weak}
Suppose that $P \colon E \to B$ is a 2-fibration and apply the inverse Grothendieck construction with the ordinary notion of fibre. When we inspect the reasoning we find:
\begin{itemize}
\item $Fb = E_b$ is a 2-category.
\item $Ff = f^*$ is a 2-functor.
\item $F\alpha = \alpha^*$ is a pseudo-natural transformation (2-natural when $P$ is horizontally split).
\item $F$ is still locally a homomorphism (locally a 2-functor when $P$ is locally split and horizontally split).
\item $\chi$ is a pseudo-natural isomorphism and $\chi_{gf}$ is 2-natural ($\chi$ is 2-natural when $P$ is horizontally split and
 $\chi_{gf}$ is an identity when $P$ is split on 1-cells).
\item $\iota$ is a pseudo-natural isomorphism and $\iota_{b}$ is 2-natural
($\iota$ is 2-natural when $P$ is locally split and $\iota_{b}$ is an identity when $P$ is split on 1-cells).
\item $\omega$, $\delta$ and $\gamma$ are identities.
\end{itemize}
This amounts to a trihomomorphism $F \colon B^\coop \to \Gray$ where $\chi$, $\iota$ are invertible, $\omega$, $\delta$ and $\gamma$ are identities and $\chi_{gf}$, $\iota_a$ are 2-natural. Trihomomorphisms of this kind certainly do give 2-fibrations under the Grothendieck construction.

When $P$ is locally and horizontally split these trihomomorphisms map into $\two\Cat$ and are just homomorphisms of ``$\two\Cat$-enriched bicategories''. Functors of this kind certainly do give (appropriately split) 2-fibrations under the Grothendieck construction.
\end{Rk}

\begin{Rk}
As in section 2: the action of the Grothendieck construction on objects is described by Bakovic in \cite{bakovic2012} section 6. Section 5 of the same paper gives a partial description of the action on objects of the pseudo-inverse. With some adjustments, we have completed the second construction (Theorem 5.1) and shown that together they form an equivalence of 3-categories. It was in completing Bakovic's description of the pseudo-inverse to the Grothendieck construction that we discovered that cartesian 2-cells must be closed under post-composition with all 1-cells.
\end{Rk}

\subsection{Examples}

\begin{Con}[Families]
When $\B$ is a bicategory we define $\Fam(\B)$ as the bicategory of `1-cell diagrams' in $\B$. An object of $\Fam(\B)$ is a pair $(\C,X)$ of where $\C$ is a category and $ X\colon \C^\op \to \B$ is a pseudo-functor. A 1-cell is a pair $(F,\alpha)\colon (\C,X) \to (\D,Y)$ where $F \colon \C \to \D$ is a functor and $\alpha \colon X \To YF^\op$ is a pseudo-natural transformation.
A 2-cell is a pair $(\sigma, \Sigma) \colon (F,\alpha) \To (G,\beta)$ where $\sigma\colon F \To G$ is a natural transformation and $\Sigma \colon \alpha \Rrightarrow Y\sigma^\op.\beta$ is a modification.
\begin{equation*}
\cd[@R+6pt]
{
\C^\op \ar[dr]_{X} \ar[rr]^{F^\op} & \rtwocell[0.55]{d}{\alpha} & \D^\op \ar[dl]^{Y} &  \C^\op \ar[dr]_{X} \pair[8pt]{rr}{F^\op}{G^\op} \utwocell{rr}{\sigma^\op} & \rtwocell[0.65]{d}{\beta} & \D^\op \ar[dl]^{Y}  \\
 & \B & \rthreecell{ur}{\Sigma} & & \B &
}
\end{equation*}
Composition and identities are not hard to describe. The coherence isomorphisms $a$, $l$ and $r$ are modifications obtained from the corresponding coherence isomorphisms in $\B$.
There is an obvious functor $\pi \colon \Fam(\B) \to \Cat$ defined by projection onto the first component.
\end{Con}

\begin{Ex} \label{ex:Fam_weak}
$\pi \colon \Fam(\B) \to \Cat$ is a fibration.
\end{Ex}
\begin{proof}
First, suppose that $(\D,Y)$ in $\Fam(\B)$ and $F\colon \C \to \D$ in $\Cat$. Let the cartesian lift of $F$ at $(\D,Y)$ be $(F,1_{YF^\op})\colon (\C,YF^\op) \to (\D,Y)$.
Now suppose that $(G,\beta) \colon (\C,X) \to (\D,Y)$ in $\Fam(\B)$ and $\sigma \colon F \To G$ in $\Cat$. 
Let the cartesian lift of $\sigma$ at $(G,\beta)$ be $(\sigma, 1_{Y\sigma.\beta}) \colon (F, Y\sigma.\beta) \To (G, \beta)$.
The details here are somewhat more complicated but the basic behaviour is the same as Example \ref{ex:Fam_strict}.
\end{proof}

\begin{Defn}
Suppose that $\B$ is a bicategory. An arrow $p\colon a \to b$ in $\B$ is called a \emph{Street fibration} when $p_* \colon \B(c,e) \to \B(c,b)$ is a Street fibration for all $c$ and the square 
\begin{equation*}
\cd[]
{
\B(c,e) \ar[d]_{p_*} \ar[r]^{f^*} & \B(c',e) \ar[d]^{p_*}\\
\B(c,b) \ar[r]_{f^*} \twocong{ur} & \B(c',b)
}
\end{equation*}
is a morphism of Street fibrations for all $f\colon c' \to c$.
\end{Defn}

This means that for each $g\colon e\to a$ and 2-cell $\alpha \colon h \to pg$ there exists a 2-cell $\c{\alpha}{g} \colon \alpha^*g \To g$ and isomorphism $\eta \colon h \To p \alpha^*g$  where $\c{\alpha}{g}$ is cartesian for $p_*$ and
\begin{equation*}
\cd[@C+24pt]
{
e \ar[r]^{g} \ar@/_10pt/[dr]_{h} & a \ar[d]^{p} \\
 \urtwocell[0.6]{ur}{\alpha} & b
}
=
\cd[@C+24pt]
{
e \ar@/^10pt/[r]^{g} \ar@/_10pt/[r]_(0.4){\alpha^*g} \ar@/_10pt/[dr]_{h} \utwocell[0.35]{r}{\c{\alpha}{g}} \urtwocell[0.7]{dr}{\eta} & a \ar[d]^{p} \\
 & b
}.
\end{equation*}
It also means that cartesian 2-cells are closed under pre-composition with arbitrary 1-cells.

\begin{Defn}
A morphism between Street fibrations $p\colon e\to b$ and $q\colon e'\to b'$ is a pair $(f\colon e \to e', g\colon b \to b')$ where $q.f \cong g.p$ and 
\begin{equation*}
\cd[]
{
\B(c,e) \ar[d]_{p_*} \ar[r]^{f_*} & \B(c,e') \ar[d]^{q_*}\\
\B(c,b) \ar[r]_{g_*} \twocong{ur} & \B(c,b')
}
\end{equation*}
is a morphism of Street fibrations for all $c$.
\end{Defn}

\begin{Con}[Internal fibrations] \label{street_fibrations}
We define $\Fib_\B$ to be the bicategory with:
\begin{itemize}
\item Objects are Street fibrations $g\colon a \to b$ in $\B$. These are sometimes written as a triple $(a,g,b)$.
\item 1-cells are triples $(h,\sigma,h') \colon (a,g,b) \to (c,g',d)$ where $\sigma$ is an isomorphism
\begin{equation*}
\cd[]
{
a \ar[r]^{h}  \ar[d]_{g} \dtwocell{dr}{\sigma}& c \ar[d]^{g'} \\
b \ar[r]_{h'} & d
}
\end{equation*}
and $h$ is a cartesian 1-cell.
\item 2-cells are pairs of 2-cells $(\alpha, \alpha') \colon (h,\sigma,h') \to (k,\tau,k')$ so that
\begin{equation*}
\cd[@R+4pt@C+18pt]
{
a \ar[r]^{h}  \ar[d]_{g} \dtwocell[0.3]{dr}{\sigma} & c \ar[d]^{g'} \\
b \pair{r}{h'}{k'} \dtwocell{r}{\alpha'} & d
}
=
\cd[@R+4pt@C+18pt]
{
a \pair{r}{h}{k} \dtwocell{r}{\alpha} \ar[d]_{g} \dtwocell[0.7]{dr}{\tau} & c \ar[d]^{g'} \\
b \ar[r]_{k'} & d
}
\end{equation*}
in $\B$. We sometimes write $(\alpha,1,\alpha')$ where $1$ is representative of the commuting condition.
\end{itemize}
Composition and identities are easy to describe.

There is a homomorphism $\cod \colon \Fib_\B \to \B$ defined by projection onto the third component.
It is called the codomain functor because it projects onto the codomain of the objects of $\Fib_\B$.
It is modelled on $\cod \colon \Fib \to \Cat$ which was used by Hermida to guide the definition of 2-fibrations.
\end{Con}

\begin{Ex}
When $\B$ has bipullbacks $\cod \colon \Fib_\B \to \B$ is a fibration.
\end{Ex}
\begin{proof}
Suppose that $(c,q,d)$ in $\Fib_\B$ and $h\colon b \to d$ in $\B$. The cartesian lift of $h$ is obtained by taking the bipullback of $q$ and $h$ 
\begin{equation*}
\cd[]
{
a \ar[r]^{\hat{h}}  \ar[d]_{\hat{q}} \dtwocell{dr}{\sigma} & c \ar[d]^{q} \\
b \ar[r]_{h} & d
}.
\end{equation*}
See \cite{street1974} for a proof that internal Street fibrations are closed under bipullback.

Now suppose that $(h,\sigma,h')$ in $\Fib_\B$ and $\alpha' \colon k' \To g'$ in $\B$. Then $\sigma. \alpha'g \colon k'g \To g'h$ and since $g'$ is a Street fibration, we get a cartesian lift $\alpha\colon k \To h$ with $\eta \colon g'k \cong k'g$
\begin{equation*}
\cd[@C+12pt]
{
a  \ar@/^10pt/[dr]^{h} \ar[d]_{g}  &  \urtwocell[0.6]{dl}{\sigma}   \\
b  \pair{dr}{h'}{k'} &  c \ar[d]^{g'} \urtwocell[0.5]{dl}{\alpha'} \\
   &  d
}
=
\cd[@C+12pt]
{
a  \pair{dr}{h}{k} \ar[d]_{g}  &  \urtwocell[0.5]{dl}{\alpha}   \\
b  \ar@/_10pt/[dr]_{k'} &  c \ar[d]^{g'}  \\
\urtwocell[0.6]{ur}{\eta}   &  d
}.
\end{equation*}
Then the cartesian lift of $\alpha'$ is $(\alpha,1,\alpha') \colon (k,\eta,k') \to (h,\sigma,h')$.
\end{proof}

\begin{Ex}[Algebras]
Let $\Mnd(\K)$ be the bicategory of pseudo-monads on a bicategory $\K$ (called \emph{doctrines} in \cite{street1980} 2.10). There is a trihomomorphism $\Mnd(\K)^\coop\to\bicat$ that maps each pseudo-monad $T$ to the bicategory $T\mhyphen\Alg$ of $T$-algebras, lax algebra morphisms and algebra 2-cells. We can use the Grothendieck construction to construct a fibration $\Alg\to\Mnd$. The objects of the total category $\Alg$ are algebras for a pseudo-monad: pairs $(S,(A,m))$ where $m\colon SA \to A$ is a $S$-algebra. The 1-cells from $(S,(A,m))$ to $(T,(B,n))$ are pairs $(\lambda,(f,\theta_f))$ where $\lambda$ is a monad morphism from $S$ to $T$ and $(f,\theta_f)\colon (A,m) \to F\lambda(B,n)$ is a lax algebra morphism.
\begin{equation*}
\cd[]{
SA \ar[d]_{Sf} \ar[rr]^{m} \urtwocell{drr}{\theta_f} && A \ar[d]^{f}\\
SB \ar[r]_{\lambda_B} & TB \ar[r]_n & B
}.
\end{equation*}
The 2-cells of $\Alg$ are pairs $(\Gamma,\alpha)\colon (\lambda,(f,\theta_f))\to (\tau,(g,\theta_g))$ where $\Gamma\colon\lambda\to\tau$ is a monad 2-cell and $\alpha$ is an algebra 2-cell
\begin{equation*}
\cd[@C-20pt]{
(A,m) \ar[dr]_{(g,\theta_g)} \ar[rr]^-{(f,\theta_f)} & \dtwocell{d}{\alpha} & F\lambda(B,n) \\
 &  F\tau(B,n) \ar[ur]_{F\Gamma_{(B,n)}} & 
}.
\end{equation*}
The fibration is projection on the first component of $\Alg$.
By construction the fibre over $T$ is equivalent to $T\mhyphen\Alg$.
\end{Ex}

\begin{Ex}[Equivalence lifting]
A homomorphism $P \colon \E \to \B$ is said the have the equivalence lifting property when:
\begin{enumerate}
\item for each object $e$ and equivalence $f \colon b \to Pe$ there is an equivalence $\hat f \colon a \to e$ with $P\hat f = f$; and
\item for each arrow $h \colon e \to e'$ and isomorphism $\alpha \colon g \to Ph$ there is an isomorphism $\hat \alpha \colon k \to h$ with $P\hat \alpha = \alpha$.
\end{enumerate}
When $P$ is strict, these are precisely the fibrations in Lack's Quillen model structure on $\bicat_s$ \cite{lack2004}.

Every fibration has the equivalence lifting property. Further more, when $\E$ and $\B$ are bigroupoids (bicategories in which all 1-cells are equivalences and all 2-cells are isomorphisms) every homomorphism with the equivalence lifting property is a fibration.

\end{Ex}

\section{Composition, Commas and Pullbacks}

In this section we show that fibrations are closed under composition and closed under equiv-comma, and that projections from oplax comma bicategories are fibrations.

\subsection{Composition}

\begin{Prop}
If $P \colon \D \to \B$ and $Q \colon \E \to \D$ are fibrations then $PQ \colon \E \to \B$ is a fibration.
\end{Prop}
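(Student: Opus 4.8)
The plan is to check the three conditions in the definition of a fibration directly for $PQ$, in each case reducing to the corresponding property of $P$ and of $Q$. The two structural facts that make this work are: the local functors compose strictly, $(PQ)_{xy}=P_{QxQy}\circ Q_{xy}$ (since $P$ and $Q$ are homomorphisms and, by the standing convention, preserve composition and identities strictly, so $PQ$ is again a strict-enough homomorphism); and bipullbacks are closed under pasting with isomorphism-squares, exactly as used in Proposition~\ref{prop:composition2}. The one auxiliary result I would isolate first is a \emph{composition lemma for cartesian cells}: if $h\colon x\to y$ in $\E$ is cartesian for $Q$ and $Qh$ is cartesian for $P$, then $h$ is cartesian for $PQ$ (and the same for $2$-cells, replacing $h,Q,P$ by a $2$-cell, $Q_{xy}$, $P_{QxQy}$).

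To prove this lemma for $1$-cells I would use the bipullback characterisation of cartesian $1$-cells and factor the square testing $PQ$-cartesianness of $h$ vertically, using $(PQ)_{zx}=P_{QzQx}\circ Q_{zx}$: the top half is the $Q$-test-square for $h$ (a bipullback, since $h$ is cartesian for $Q$), the bottom half is the $P$-test-square for $Qh$ evaluated at the object $Qz$ of $\D$ (a bipullback, since $Qh$ is cartesian for $P$), and the isomorphism gluing them is — by bicategorical coherence — the coherence of $PQ$ on composition. Since bipullbacks paste, the whole square is a bipullback, so $h$ is cartesian for $PQ$; this is word-for-word the argument of Proposition~\ref{prop:composition2}. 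For $2$-cells the identical decomposition with $(PQ)_{xy}=P_{QxQy}\circ Q_{xy}$ now involves only ordinary pullback squares in $\Cat$, whose pasting property is classical. The local fibredness of $PQ$ also follows from $(PQ)_{xy}=P_{QxQy}\circ Q_{xy}$: each factor is a fibration (because $P$, $Q$ are locally fibred), and fibrations of categories are closed under composition.

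For the first condition I would equip $PQ$ with the \emph{composite cleavage}: the chosen cartesian lift of $f\colon b\to PQe$ at $e$ is the chosen $Q$-cartesian lift, at $e$, of the chosen $P$-cartesian lift of $f$ at $Qe$; call it $\hat f\colon a\to e$. Then $Q\hat f$ is the chosen $P$-cartesian lift of $f$, so $PQ\hat f=f$, and $\hat f$ is cartesian for $Q$ with $Q\hat f$ cartesian for $P$, hence cartesian for $PQ$ by the lemma. For the third condition, since $PQ$ is locally fibred it is enough, by Proposition~\ref{chosen_2-cells_enough_weak}(1), to show that the horizontal composite of two chosen cartesian $2$-cells $\sigma_1,\sigma_2$ of $PQ$ is cartesian for $PQ$. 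By construction each $\sigma_i$ is cartesian for $Q$ with $Q\sigma_i$ a chosen $P$-cartesian $2$-cell; as $Q$ preserves horizontal composition strictly, $Q(\sigma_2*\sigma_1)=Q\sigma_2*Q\sigma_1$, which is cartesian for $P$ because $P$ is a fibration, while $\sigma_2*\sigma_1$ is cartesian for $Q$ because $Q$ is a fibration; the lemma then gives that $\sigma_2*\sigma_1$ is cartesian for $PQ$. This establishes all three conditions.

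The main obstacle — really the only subtle point — is the last step: being cartesian for $PQ$ does \emph{not} by itself entail being cartesian for $Q$, so one cannot run the argument of the third condition with arbitrary cartesian $2$-cells in place of chosen ones; the reduction to chosen cells via Proposition~\ref{chosen_2-cells_enough_weak}(1), and hence the explicit composite cleavage, is what makes the argument go through. Beyond that, the proof is a transcription of Proposition~\ref{prop:composition2} and the classical pasting lemma for pullbacks, with no genuinely new coherence computations to perform.
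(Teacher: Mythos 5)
Your proposal is correct and follows essentially the same route as the paper: cartesian lifts for $PQ$ are double lifts, their cartesianness is verified by pasting the two bipullback (resp.\ pullback) squares along the factorisation $(PQ)_{zx}=P_{QzQx}\circ Q_{zx}$, local fibredness follows from closure of ordinary fibrations under composition, and the third condition is checked on chosen cartesian $2$-cells and then extended via Proposition~\ref{chosen_2-cells_enough_weak}. The only cosmetic difference is that you invoke the standing strictness convention to get $Q(\sigma_2*\sigma_1)=Q\sigma_2*Q\sigma_1$ on the nose, whereas the paper conjugates by the coherence isomorphism $\phi$ and uses that isomorphisms are cartesian; both are fine.
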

\begin{proof}
We first need to show that it has cartesian lifts of 1-cells.
Suppose that $e\in E$ and $f \colon a \to PQe$ in $B$, then let $f'' = \c{\c{f}{Qb}}{b}$. This is the cartesian double-lift:
\begin{equation*}
\cd[@R-12pt]
{
f'^*b \ar[r]^{f''} & b \\
f^*Qb \ar[r]^-{F'} & Qb \\
a \ar[r]^{f} & PQb
}
\end{equation*}
where $f'$ is cartesian for $P$ over $f$ and $f''$ is cartesian for $Q$ over $f'$. We need to show that $f''$ is cartesian for $PQ$ over $f$. This is trivial when we consider the definition of cartesian 1-cell by bipullback.
\begin{equation*}
\cd[]{
E(z,x) \ar[d]_{Q_{zx}} \ar[r]^{f''_*} & E(z,y) \ar[d]^{Q_{zy}} \ar@{}[dl]|{\cong} \\
D(Qz,Qx) \ar[d]_{P_{QzQx}} \ar[r]^{f'_*} & D(Qz,Qy) \ar[d]^{P_{QzQy}} \ar@{}[dl]|{\cong} \\
B(PQz,PQx) \ar[r]_{f_*} & B(PQz,PQy)
}
\end{equation*}
Since both squares below are bipullbacks, the composite is a bipullback and $f''$ is a cartesian 1-cell over $f$. This could also be proved by explicitly using the properties of cartesian 1-cells.

Now we need to show that $PQ$ locally fibred. Since $PQ_{xy}$ is defined by the composite
\begin{equation*}
\cd[]
{
E(x,y) \ar[r]^{Q_{xy}} & D(Qx,Qy) \ar[r]^{P_{QxQy}} & B(PQx,PQy)
}
\end{equation*}
and $P$ and $Q$ are locally fibred, $PQ_{xy}$ is a fibration. Thus $PQ$ is locally fibred. The cartesian lifts of 2-cells in $B$ are the double-lifts similar to those described above.

Finally, we need to show that cartesian 2-cells closed under horizontal composition.
Suppose that $\alpha$ and $\beta$ are the chosen cartesian lifts of $PQ\alpha$ and $PQ\beta$; then $\alpha * \beta$ is cartesian over $Q(\alpha * \beta)$ because $Q$ is a fibration. Notice also that $Q\alpha * Q\beta$ is cartesian over $P(Q\alpha * Q\beta)$ because $P$ is a fibration. But
\begin{equation*}
\cd[]
{
Qhf \ar@{=>}[r]^{Q(\alpha*\beta)} & Qkg
}
=
\cd[]
{
Qhf \ar@{=>}[r]^{\phi} & QhQf \ar@{=>}[r]^{Q\alpha*Q\beta} & QkQg \ar@{=>}[r]^{\phi^{-1}} & Qkg
}
\end{equation*}
where $\phi$ is the composition coherence isomorphism for $Q$. Now since isomorphisms are cartesian, $Q(\alpha*\beta)$ is cartesian over $PQ(\alpha*\beta)$. Thus $\alpha*\beta$ is a cartesian lift of a cartesian lift and so it is cartesian for $PQ$. We've proven this for the chosen cartesian lift only but by Proposition \ref{prop:chosen_2-cells_enough_strict} this is enough.
\end{proof}

\begin{Prop}
If $P \colon D \to B$ and $Q \colon E \to D$ are 2-fibrations then $PQ \colon E \to B$ is a 2-fibration.
\end{Prop}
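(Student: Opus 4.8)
The plan is to transcribe the proof of the bicategorical analogue, erasing all coherence isomorphisms: since a $2$-functor preserves composition and identities on the nose, the three defining conditions for $PQ$ to be a $2$-fibration follow from those for $P$ and $Q$ with no $2$-dimensional slack. Throughout I would fix the cleavage of $PQ$ to be the composite of the cleavages of $P$ and $Q$.

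First I would handle cartesian lifts of $1$-cells. Given $e\in E$ and $f\colon b\to PQe$ in $B$, take the double lift $f''=\c{\c{f}{Qe}}{e}$, where $\c{f}{Qe}$ is the chosen cartesian $P$-lift of $f$ at $Qe$ and $\c{\c{f}{Qe}}{e}$ is the chosen cartesian $Q$-lift of $\c{f}{Qe}$ at $e$; so $Qf''=\c{f}{Qe}$. To see $f''$ is cartesian for $PQ$ I would use the hom-category characterisation: since $PQ_{z,-}=P_{Qz,Q-}\circ Q_{z,-}$, the square witnessing cartesian-ness of $f''$ for $PQ$ factors vertically as the square for $Q$ at $f''$ stacked over the square for $P$ at $\c{f}{Qe}$. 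The upper square is a pullback (as $f''$ is cartesian for $Q$), the lower one is a pullback (as $\c{f}{Qe}$ is cartesian for $P$), and a stack of pullbacks is a pullback, so $f''$ is a cartesian $1$-cell over $f$.

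Next, local fibredness: $PQ_{xy}$ is the composite $E(x,y)\xrightarrow{Q_{xy}}D(Qx,Qy)\xrightarrow{P_{QxQy}}B(PQx,PQy)$ of two fibrations, and fibrations are closed under composition, so $PQ$ is locally fibred; cartesian $2$-cells of $PQ$ arise as the evident double-lifts. Finally, for the horizontal-composition axiom, Proposition \ref{prop:chosen_2-cells_enough_strict} reduces the problem to chosen cartesian lifts. If $\sigma,\tau$ are chosen cartesian $2$-cells for $PQ$ then, being double-lifts, they are cartesian for $Q$ with $Q\sigma,Q\tau$ cartesian for $P$. Hence $\sigma*\tau$ is cartesian for $Q$ (as $Q$ is a $2$-fibration), and $Q(\sigma*\tau)=Q\sigma*Q\tau$ (as $Q$ is a $2$-functor) is cartesian for $P$ (as $P$ is a $2$-fibration), so by the same pullback-stacking argument applied hom-wise, $\sigma*\tau$ is cartesian for $PQ$. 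Proposition \ref{prop:chosen_2-cells_enough_strict} then yields the axiom for all cartesian $2$-cells.

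I expect the only point requiring care is the bookkeeping around the cleavage flagged above: once ``chosen cartesian cell for $PQ$'' is pinned down to mean ``double-lift'', each such cell is automatically cartesian for $Q$ and has $Q$-image cartesian for $P$, and the rest is a routine specialisation of the bicategorical argument (one could equally package the three steps as ``composites of fibrations are fibrations, applied hom-wise, together with the corresponding facts for cartesian $1$- and $2$-cells'').
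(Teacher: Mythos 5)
Your proposal is correct and matches the paper's intent exactly: the paper's own proof of this proposition consists of the single remark that it is ``essentially the same as that for fibrations,'' i.e.\ the bicategorical composition proof with the coherence isomorphisms stripped out, which is precisely what you have written out. The three steps (stacked pullbacks for cartesian $1$-cells, composite of local fibrations, and reduction to chosen double-lifts via Proposition \ref{prop:chosen_2-cells_enough_strict} for the horizontal-composition axiom) are the same ones used there.
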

\begin{proof}
This proof is essentially the same as that for fibrations. There are no major differences.
\end{proof}

\subsection{Oplax Comma bicategories}


\begin{Con}[Oplax Comma bicategory]
Suppose that $F$, $G$ are homomorphisms.
Let $\comma{F}{G}$ be the bicategory whose objects are $(x,x',\tau_x)$ where $x\in C$, $x'\in B$ and $\tau_x \colon Fx \to Gx'$.
The arrows are triples $(f,f',\tau_f)\colon (x,x',\tau_x) \to (y,y',\tau_y)$ where $f \colon x \to y$, $f' \colon x' \to y'$ and $\tau_f\colon \tau_yFf \To Gf'\tau_x$.
The 2-cells are triples $(\alpha,\alpha',\tau_\alpha)\colon (f,f',\tau_f) \To (g,g',\tau_g)$ where $\alpha \colon f \To g$, $\alpha' \colon f' \To g'$ and $\tau_\alpha$ is an equality:
\begin{equation*}
\cd[@C+16pt]
{
 Fx \ar[r]^{\tau_x} \pair{d}{Fg}{Ff} \rtwocell{d}{F\alpha} & Gx' \ar[d]^{Gg'}\\
 Fy \ar[r]_{\tau_y} & Gy' \urtwocell[0.35]{ul}{\tau_g}
}
\hspace{0.1cm}=\hspace{0.1cm}
\cd[@C+16pt]
{
 Fx \ar[r]^{\tau_x} \ar[d]_{Ff} & Gx' \pair{d}{Gg'}{Gf'} \rtwocell{d}{G\alpha'}\\
 Fy \ar[r]_{\tau_y} \urtwocell[0.35]{ur}{\tau_f} & Gy'
}.
\end{equation*}
Composition and identities are given in the obvious way. By projection onto the first and second components we obtain pseudo-functors $d_0$ and $d_1$. Both of these preserve composition and identities on the nose.

This gives rise to an oplax natural transformation whose components are given by projecting $\comma{F}{G}$ onto its third component.
\begin{equation*}
\cd[]
{
\comma{F}{G} \ar[r]^{d_1} \ar[d]_{d_0} & \B \ar[d]^{G} \\
\C \ar[r]_{F} & \D \urtwocell{ul}{\tau}
}
\end{equation*}
\end{Con}

\begin{Rk}[Lax Comma bicategories]
The obvious variation on this construction is called ``2-comma-category'' in \cite{gray1969} and ``lax comma category'' in \cite{kelly1974}. We have merely adjusted it to fit the context of bicategories.
\end{Rk}

\begin{Rk}[Weighted limits]
The oplax comma bicategory construction could also be defined as some kind of weighted limit. The same is true of other constructions later in this section. We leave the details to the interested reader.
\end{Rk}

\begin{Prop}
For any $F\colon \C \to \D$ and $G\colon \B \to \D$, $d_0\colon \comma{F}{G}\to \C$ is a fibration.
\end{Prop}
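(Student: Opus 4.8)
The plan is to verify directly that $d_0\colon \comma{F}{G}\to\C$ has cartesian lifts of $1$-cells, is locally fibred, and has cartesian $2$-cells closed under horizontal composition. First I would guess the cartesian lift. Given an object $(y,y',\tau_y)$ of $\comma{F}{G}$ and a $1$-cell $f\colon x\to y$ in $\C$, the only data we are free to choose on the $\B$-side is trivial: take $x' = y'$, $f' = 1_{y'}$, and $\tau_x = Gf'\cdot\tau_y\cdot Ff \simeq \tau_y\cdot Ff$ (using the coherence isos of $G$), with $\tau_f$ the identity-ish coherence $2$-cell $\tau_y Ff \To G1_{y'}\tau_x$. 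So the candidate lift is $\c{f}{(y,y',\tau_y)} = (f, 1_{y'}, \tau_f)\colon (x, y', \tau_y Ff) \to (y,y',\tau_y)$. This exactly mirrors the construction in the earlier families and internal-fibration examples, where the cartesian lift has an identity in the second component and the ``pulled-back'' structure in the first.

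Second I would check the universal property from Definition \ref{def:cartesian1_weak}. Given $(g,g',\tau_g)\colon (z,z',\tau_z)\to (y,y',\tau_y)$, a $1$-cell $h\colon z\to x$ in $\C$, and an isomorphism $\alpha\colon f.h \To g$, the required lift $(\hat h, \hat\alpha,\hat\beta)$ must have $\hat h\colon (z,z',\tau_z)\to (x,y',\tau_y Ff)$. Since $d_0$ is projection onto the first component and is strictly a homomorphism, $\hat\beta$ can be taken to be $1_h$ by Proposition \ref{prop:nicer_lifting} (we are already assuming our fibrations are of the nicer kind, but here I would just exhibit it directly): set the first component of $\hat h$ to be $h$ itself, the second component to be $g'\colon z'\to y'$, and its structure $2$-cell to be $\tau_g$ composed with the coherence isos comparing $1_{y'}\cdot g'$ with $g'$ and reassociating; then $\hat\alpha$ has first component $\alpha$ and second component the identity, and one checks the coherence condition $\alpha.Pf\hat\beta = P\hat\alpha.\phi$ is automatic because $d_0$ is strict. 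The $2$-cell lifting property (part (2) of the definition) is verified the same way: given $\delta\colon h\To h'$ downstairs compatible with $\sigma\colon g\To g'$, the unique $\hat\delta$ has first component $\delta$ and second component forced by the equality axiom defining $2$-cells in $\comma{F}{G}$, with uniqueness coming from the fact that the second component of a $2$-cell in $\comma{F}{G}$ is not free data (it is an \emph{equation}), so once the first component is pinned down so is the whole $2$-cell.

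Third, local fibredness: $(d_0)_{(x,x',\tau_x),(y,y',\tau_y)}$ is essentially the functor sending $(f,f',\tau_f)\mapsto f$, and its cartesian lifts of $2$-cells are obtained by an entirely analogous recipe — given $(g,g',\tau_g)$ and $\beta\colon f\To g$ in $\C$, the cartesian lift is $(\beta, 1_{g'}, -)$, the point again being that the $2$-cell component in $\comma{F}{G}$ is an equation, so lifting a $2$-cell of $\C$ amounts to choosing the primed $2$-cell to be an identity and letting the equation determine the rest; uniqueness is immediate. Finally, closure of cartesian $2$-cells under horizontal composition: by Proposition \ref{chosen_2-cells_enough_weak}(1) it suffices to check that the horizontal composite of the \emph{chosen} cartesian $2$-cells (those with an identity in the primed component) is again cartesian, and since horizontal composition in $\comma{F}{G}$ composes the primed components and identities compose to identities, this composite again has an identity primed component, hence is the chosen cartesian lift of the horizontal composite downstairs.

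The main obstacle I expect is purely bookkeeping: correctly threading the coherence isomorphisms of the homomorphisms $F$, $G$ through the structure $2$-cells $\tau_f$, $\tau_g$ when verifying the coherence equations in Definition \ref{def:cartesian1_weak}, and confirming that the ``second component is an equation'' phenomenon really does force uniqueness of lifted $2$-cells (which is what makes the weak cartesian property hold on the nose rather than merely up to isomorphism). No genuinely hard step is involved — as the paper remarks for $d_0$, once one unwinds the definition of cartesian $1$-cell via bipullbacks, the statement is close to formal — so I would present the lift explicitly, note that $d_0$ being strict kills most of the coherence fuss, and leave the large pasting diagrams to the reader.
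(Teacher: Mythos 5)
Your proposal is correct and follows essentially the same route as the paper: the cartesian lift of a 1-cell $f$ at $(y,y',\tau_y)$ is $(f,1_{y'},\tau_{f1})$ with domain $(x,y',\tau_y Ff)$, the local cartesian lift of a 2-cell $\alpha$ at $(g,g',\tau_g)$ is $(\alpha,1_{g'})$ with the domain's structure 2-cell forced to be $\tau_g.\tau_yF\alpha$, and closure under horizontal composition is reduced to the chosen lifts via Proposition \ref{chosen_2-cells_enough_weak}. Your observation that the 2-cell component in $\comma{F}{G}$ is an equation rather than free data, which makes uniqueness of lifted 2-cells immediate, is exactly the point the paper relies on implicitly.
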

\begin{proof}
We need to show that $d_0$ has cartesian lifts of 1-cells. Suppose that $(y,y',\sigma_y)$ in $\comma{F}{G}$ and $f\colon x \to y$ in $\C$. Then there exists $(f,1,\sigma_{f1})\colon(x,x',\sigma_x)\to(y,y',\sigma_y)$ where $x'=y'$, $\sigma_x = \sigma_y.Ff$ and $\sigma_{f1}$ is
\begin{equation*}
\cd[@C+12pt@R+12pt]
{
 Fx \ar[r]^{\sigma_x} \ar[d]_{Ff} \ar[dr]_{\sigma_x} & Gy' \pair{d}{G1}{1} \twocong{d}\\
 Fy \ar[r]_{\sigma_y} \twocong[0.7]{ur} & Gy'~.
}.
\end{equation*}
where the unlabelled isomorphisms are coherences for $G$ and $\D$.
This is a cartesian lift of $f$.

We need to show that $d_0$ has cartesian lifts of 2-cells. Suppose that $(g,g',\sigma_g)\colon(x,x',\sigma_x)\to(y,y',\sigma_y)$ in $\comma{F}{G}$ and $\alpha\colon f \To g$ in $\C$. Then there exists $(\alpha,1_{g'})\colon(f,f',\sigma_f)\To(g,g',\sigma_g)$ where $f'=g'$, $\sigma_f = \sigma_g . \sigma_yF\alpha$ and thus
\begin{equation*}
\cd[@C+16pt]
{
 Fx \ar[r]^{\sigma_x} \pair{d}{Fg}{Ff} \rtwocell{d}{F\alpha} & Gx' \ar[d]^{Gg'}\\
 Fy \ar[r]_{\sigma_y} & Gy' \urtwocell[0.35]{ul}{\sigma_g}
}
\hspace{0.5cm}=\hspace{0.5cm}
\cd[@C+16pt]
{
 Fx \ar[r]^{\sigma_x} \ar[d]_{Ff} & Gx' \pair{d}{Gg'}{Gg'} \rtwocell{d}{G1}\\
 Fy \ar[r]_{\sigma_y} \urtwocell[0.35]{ur}{\sigma_f} & Gy'
}.
\end{equation*}
This is a cartesian lift of $\alpha$.

We need to check that cartesian 2-cells are closed under horizontal composition. Examining the chosen cartesian 2-cells we find that
$(\alpha,1_{g'})*(\beta,1_{k'}) = (\alpha*\beta,1_{g'k'})$ and thus they're closed under composition. We've proven this for the chosen cartesian lifts only, but by proposition \ref{prop:chosen_2-cells_enough_strict} this is enough.
\end{proof}

\begin{Rk}
When $F$ is an identity $1_\D$ we use $\D/G \to \D$ instead of $\comma{1_D}{G} \to \D$. This is referred to by Bakovic \cite{bakovic2012} as the ``canonical fibration associated to F''. We call it the free fibration on $F$ (see the following remark).
\end{Rk}

\begin{Rk}[Free fibrations]
If $H \colon \A \to \B$ is a homomorphism then $FH = d_0 \colon {\B}/{H} \to \B$ is the free fibration on $H$. There is a biequivalence
$$ (\Fib/\B)(FH,P) \simeq (\Cat/\B)(H,UP)$$
where $UP$ is $P$ regarded simply as a homomorphism. The details are very similar to the standard result for ordinary fibrations. 
The only extra result we need is to know that you can lift arbitrary 2-cells along cartesian 1-cells (the 2-cells in the base need not be isomorphisms). This relies on the fact that fibrations are locally fibred.

The naturality of this biequivalence is very weak: it is the 1-cell component of a tritransformation
\begin{equation*}
\cd[@C+2cm]
{
 (\bicat/\B)^\op \times \Fib/\B \pair{r}{\Fib/\B(F\mhyphen,\mhyphen)}{\bicat/\B(\mhyphen,U\mhyphen)}
 \dtwocell[0.7]{r}{} & \bicat
}.
\end{equation*}
\end{Rk}

\begin{Rk}[Two-sided fibrations]
This construction also makes $d_1$ a `coop-fibration' and the span $(d_0, d_1)$ has some of the characteristics of a two-sided discrete fibration.
By `coop-fibration' we mean the notion dual to fibration: a homomorphism that is locally an opfibration, has opcartesian lifts of 1-cells and opcartesian 2-cells are closed under horizontal composition.

Locally, the span $(d_0, d_1)$ is a discrete two-sided fibration: whenever $(\alpha,\alpha')\colon (f,f',\sigma_f) \To (g,g',\sigma_g)$ we have
\begin{align*}
 d_1(\c{\alpha}{(g,g',\sigma_g)}) & = 1_{g'} \\
 d_0(\c{\alpha'}{(f,f',\sigma_f)})&  = 1_{f} \\
 \c{\alpha'}{(f,f',\sigma_f)}.\c{\alpha}{(g,g',\sigma_g)}) & = (\alpha,\alpha').
\end{align*}
This means that the cartesian lifts for $d_0$ are identities under the action of $d_1$; and vice versa; and that the cartesian lift of $d_0(\alpha,\alpha')$ and the opcartesian lift of $d_1(\alpha,\alpha')$ compose to give $(\alpha,\alpha')$.

On 1-cells the behaviour is somewhat weaker. Whenever $(f,f',\sigma_f)\colon (x,x',\sigma_x) \to (y,y',\sigma_y)$ we have
\begin{align*}
d_1(\c{f}{(y,y',\sigma_y)})  & = 1_{y'} \\
d_0(\c{f'}{(x,x',\sigma_x)}) & = 1_{x} 
\end{align*}
and
\begin{equation*}
\cd[@C-6pt]
{
 (x,x',\sigma_x) \ar[rrrr]^{(f,f',\sigma_f)} \ar[dr]_{(1,f',r.\phi)} & & & & (y,y',\sigma_y) \\
 & (x,y',Gf'\sigma_x) \ar[rr]_-{(1,\phi.l. \sigma_f .r. \phi)} \dtwocell{urr}{(r.r,l.l)} & & (x,y',\sigma_yFf) \ar[ur]_{(f,1,\phi.l)} & 
}
\end{equation*}
where the $\phi$ are coherence isomorphisms for $F$ and $G$.

\end{Rk}

\begin{Prop}
For any 2-functors $F\colon C \to D$ and $G\colon B \to D$, $d_0\colon \comma{F}{G}\to C$ is a 2-fibration.
\end{Prop}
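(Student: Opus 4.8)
The plan is to run the proof of the preceding (bicategorical) proposition in the strict setting, observing that every coherence isomorphism appearing there collapses to an identity, so that the cartesian $1$-cells satisfy the stronger universal property of Definition~\ref{def:cartesian_1-cell_strict} and not merely the bipullback one. First I would record that when $F$ and $G$ are $2$-functors the oplax comma $\comma{F}{G}$ is a genuine strict $2$-category: associativity and unit laws for the pasting of the structure $2$-cells $\tau_f \colon \tau_y Ff \To Gf'\tau_x$ hold on the nose because composition in $\D$ is strictly associative and unital and $F$, $G$ preserve composites and identities strictly. Consequently $d_0 \colon \comma{F}{G} \to \C$ is a strict $2$-functor, since composition in the first component is just composition in $\C$.

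For cartesian lifts of $1$-cells, I would take the lift of $f \colon x \to y$ at $(y,y',\sigma_y)$ to be $(f,1_{y'},1) \colon (x, y', \sigma_y Ff) \to (y,y',\sigma_y)$ with identity structure $2$-cell; this makes sense because $G1_{y'} = 1_{Gy'}$ strictly, so the required $2$-cell $\sigma_y Ff \To G1_{y'}\cdot(\sigma_y Ff)$ is the identity. To verify Definition~\ref{def:cartesian_1-cell_strict}(1): a $1$-cell $(h,h',\tau_h) \colon (z,z',\tau_z) \to (y,y',\sigma_y)$ together with $u \colon z \to x$ satisfying $f \cdot u = h$ determines the lift $(u, h', \tau_h)$, the key point being that the structure $2$-cell of the composite $(f,1_{y'},1)\cdot(u,h',\tau_h)$ is $\tau_h$ whiskered only by identities, hence $\tau_h$ itself; uniqueness is immediate because $d_0$ fixes the first component and whiskering by $1_{y'}$ is cancellable. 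For Definition~\ref{def:cartesian_1-cell_strict}(2): a $2$-cell $(\sigma_0,\sigma_1)$ of $\comma{F}{G}$ and a $2$-cell $\tau$ of $\C$ with $f \cdot \tau = \sigma_0$ produce the unique lift $(\tau, \sigma_1)$ by the same cancellation, and $(\tau,\sigma_1)$ is a legitimate $2$-cell of $\comma{F}{G}$ precisely because $(\sigma_0,\sigma_1)$ satisfies the oplax comma $2$-cell equation. Local fibredness is handled exactly as in the bicategorical proof: the cartesian lift of $\alpha \colon f \To g \colon x \to y$ at $(g,g',\sigma_g)$ is $(\alpha, 1_{g'})$ with domain object data $f' = g'$ and $\sigma_f = \sigma_g \cdot (\sigma_y F\alpha)$, and cartesianness of $(\alpha,1_{g'})$ as a $1$-cell of the hom-functor $(d_0)_{xy}$ is again the cancellation argument (for $(\gamma,\gamma')$ and $\beta$ with $\alpha \cdot \beta = \gamma$, the unique factor is $(\beta,\gamma')$).

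Finally I would check that cartesian $2$-cells are closed under horizontal composition. The chosen cartesian lifts are the $2$-cells of the form $(\alpha, 1_{g'})$, and horizontal composition in $\comma{F}{G}$ acts componentwise on the first two coordinates, so the horizontal composite of two chosen cartesian lifts is again of this form, in particular cartesian; by Proposition~\ref{prop:chosen_2-cells_enough_strict}, applied to each relevant pair of base $2$-cells, all cartesian $2$-cells are closed under horizontal composition. This establishes the three defining conditions for $d_0$ to be a $2$-fibration. There is no real obstacle here: the only work is the bookkeeping of structure $2$-cells in composites and the observation that $\comma{F}{G}$ strictifies, which is exactly what upgrades ``cartesian by bipullback'' to ``cartesian in the sense of Definition~\ref{def:cartesian_1-cell_strict}''.
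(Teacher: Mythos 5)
Your proposal is correct and takes the same route as the paper, whose own proof of this proposition simply says it is ``essentially the same as that for fibrations'': you run the bicategorical argument (same choice of lifts $(f,1_{y'},1)$ with $\sigma_x=\sigma_y Ff$, same $2$-cell lifts $(\alpha,1_{g'})$, same appeal to Proposition~\ref{prop:chosen_2-cells_enough_strict}) while noting that strictness of $F$, $G$ and $\D$ collapses the coherence cells to identities. The only difference is that you actually spell out the verification of the strict universal property of Definition~\ref{def:cartesian_1-cell_strict}, which the paper leaves implicit.
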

\begin{proof}
This proof is essentially the same as that for fibrations. There are no major differences.
\end{proof}

\subsection{Pullbacks}


\begin{Con}[equiv-comma]
Suppose $F\colon \C \to \D$ and $G\colon \B \to \D$ are homomorphisms.
The equiv-comma $\C \times_{\simeq} \B$ is the subcategory of $\comma{F}{G}$ containing:
all objects $(x,x',\tau_x)$ where $\tau_x \colon Fx \to Gx'$ is an \emph{equivalence};
all 1-cells $(f,f',\tau_f)$ where $\tau_f$ is an \emph{isomorphism}; and all 2-cells.
The functors $G'$ and $F'$ are the projections onto the first and second components of $\C \times_{\simeq} \B$. This gives a pseudo-natural equivalence
\begin{equation*}
\cd[]
{
\C \times_{\simeq} \B \ar[r]^-{F'} \ar[d]_{G'} & \B \ar[d]^{G} \\
\C \ar[r]_{F} & \D \urtwocell{ul}{\tau}
}.
\end{equation*}
\end{Con}

\begin{Prop}
\label{prop:equiv_pullback}
Let $\A\times_{\simeq} \E$ be the equiv-comma of $P \colon \E \to \B$ and $F \colon \A \to \B$. If $P$ is a fibration then $P'$ is a fibration and $F'$ is cartesian.
\end{Prop}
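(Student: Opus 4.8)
The plan is to transport the classical pullback-stability argument recalled in the introduction to the bicategorical setting, organising the whole proof around the observation that the hom-categories of $\A\times_{\simeq}\E$ are iso-commas. Write an object of $\A\times_{\simeq}\E$ as a triple $(a,e,\tau_a)$ with $\tau_a\colon Fa\to Pe$ an equivalence; then $P'$ is the projection onto $\A$ and $F'$ the projection onto $\E$, the square of $P'$ and $F'$ over $F$ and $P$ is filled by a pseudo-natural equivalence, and ``$F'$ is cartesian'' is to be read as ``$F'$ preserves cartesian $1$- and $2$-cells''. For $Z=(z,\tilde z,\tau_z)$ and $W=(a,e,\tau_a)$, the category $(\A\times_{\simeq}\E)(Z,W)$ is (isomorphic to) the iso-comma of the cospan $\A(z,a)\to\B(Fz,Pe)\leftarrow\E(\tilde z,e)$ formed by $\tau_a\circ F(-)$ and $P(-)\circ\tau_z$, because a $1$-cell $Z\to W$ is exactly a pair $(g\colon z\to a,\ g'\colon\tilde z\to e)$ equipped with an isomorphism $\tau_a Fg\cong Pg'\tau_z$, and the compatibility condition on a $2$-cell of $\comma{F}{P}$ is precisely the condition defining a morphism of iso-commas.

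For cartesian $1$-cells I would proceed as follows. Given $(b,e',\tau_b)$ in $\A\times_{\simeq}\E$ and $f\colon a\to b$ in $\A$, use that $P$ is a fibration to take the cartesian lift $k\colon\hat e\to e'$ of the $1$-cell $\tau_b\circ Ff\colon Fa\to Pe'$, so that $Pk=\tau_b Ff$ and hence $P\hat e=Fa$; then $(a,\hat e,1_{Fa})$ is an object and $(f,k,\tau_f)\colon(a,\hat e,1_{Fa})\to(b,e',\tau_b)$, with $\tau_f$ the evident unit isomorphism, is a $1$-cell lying in $\A\times_{\simeq}\E$ (since $\tau_f$ is invertible) and sitting over $f$. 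To see that it is cartesian I would verify Definition~\ref{def:cartesian1_weak} directly: given a competing $1$-cell $(g,g',\tau_g)\colon Z\to(b,e',\tau_b)$, a $1$-cell $h\colon z\to a$ in $\A$ and an isomorphism $\alpha\colon fh\To g$, form $u:=Fh\circ\tau_z^{\,\centerdot}\colon P\tilde z\to Fa=P\hat e$ (with $\tau_z^{\,\centerdot}$ a pseudo-inverse of the equivalence $\tau_z$); a routine pasting of $\alpha$, $\tau_g$, the composition constraint of $F$ and the equivalence $2$-cells for $\tau_z$ yields an isomorphism $Pk\circ u\cong Pg'$, and feeding $u$ and this isomorphism into the cartesian property of $k$ produces $h'\colon\tilde z\to\hat e$ together with isomorphisms $kh'\cong g'$ and $Ph'\cong u$, from which one reads off the required lift $(\hat h,\hat\alpha,\hat\beta)$ with $\hat h=(h,h',\tau_h)$ and $\hat\beta=1_h$. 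The $2$-cell clause of Definition~\ref{def:cartesian1_weak} reduces in the same way to the $2$-cell clause for $k$ and the uniqueness of lifts there. (Equivalently, one checks that the induced square of hom-categories is a bipullback by pasting the commuting $\A$-square onto the cartesianness square of $k$ reindexed along the equivalence $\tau_z$, using that bipullbacks compose and cancel, as in the proofs of Propositions~\ref{prop:composition2} and~\ref{prop:left-2-out-of-3_weak}.)

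For the remaining two fibration axioms I would argue hom-wise. By the iso-comma description, $P'_{WW'}$ is (isomorphic to) the projection onto $\A(w,w')$ of the iso-comma of $\tau_{w'}\circ F(-)$ and $P(-)\circ\tau_w$; the second of these functors is a Street fibration, being the composite of the fibration $P_{\tilde w\tilde w'}$ with the equivalence $(-)\circ\tau_w$, and the projection along an arbitrary functor of an iso-comma whose other leg is a Street fibration is a fibration (the slack being absorbed by the comparison isomorphism of the iso-comma), so $P'$ is locally fibred. For closure of cartesian $2$-cells under horizontal composition I would compute the chosen cartesian $2$-cell lifts of $P'$ from those of $P$ by the same iso-comma analysis: such a lift has the form $(\alpha,\hat\alpha',\text{iso})$ with $\hat\alpha'$ cartesian for $P$, so the horizontal composite of two of them has $\E$-component a horizontal composite of cartesian $2$-cells in $\E$, which is cartesian because $P$ is a fibration; Proposition~\ref{chosen_2-cells_enough_weak}(1) then upgrades this to all cartesian $2$-cells. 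Finally, $F'$ is cartesian: by the uniqueness-up-to-equivalence of cartesian $1$-cells, every cartesian $1$-cell of $P'$ over $f$ equals, up to an equivalence and an isomorphism, the lift $(f,k,\tau_f)$ above, so $F'$ sends it to something isomorphic to a composite of the $P$-cartesian $1$-cell $k$ with an equivalence, hence to a cartesian $1$-cell by Propositions~\ref{weak_1-cells_iso_cartesian} and~\ref{prop:composition2}; the statement for $2$-cells follows the same way, using the factorisation of Proposition~\ref{prop:2-cells_factor_weak} and that cartesian $2$-cells are closed under isomorphism.

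I expect the main obstacle to be the coherence bookkeeping in the first clause of Definition~\ref{def:cartesian1_weak}: showing that the various pastings of $\tau_f$, $\tau_g$, the composition and unit constraints of $F$ and of $P$, and the adjoint-equivalence data for $\tau_z$ fit together exactly so that the triple $(\hat h,\hat\alpha,\hat\beta)$ built through $k$ satisfies the coherence equation demanded of a lift. As with Example~\ref{ex:Fam_weak} and Theorem~\ref{thm:GC_weak}, this is lengthy but mechanical, and I would state the constructions above and leave the diagram chases to the reader.
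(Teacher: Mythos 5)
Your proposal is correct and follows essentially the same route as the paper: the cartesian lift of $f$ at $(b,e',\tau_b)$ is built by $P$-lifting $\tau_b\circ Ff$ with identity equivalence on the new object, the cartesian $2$-cell lifts are obtained from $P$'s local lifts with the slack absorbed by the invertible third component (the paper does this by hand with the adjoint-equivalence data $\tau_x^{\centerdot},\epsilon$ where you package it as an iso-comma of hom-categories, a purely presentational difference), horizontal closure is checked on chosen lifts via their $P$-cartesian second components and upgraded by Proposition~\ref{chosen_2-cells_enough_weak}, and $F'$ is cartesian because those second components are $P$-cartesian. No gaps.
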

\begin{proof}
We want to show that $P'$ is a fibration. We first need to show that it has cartesian lifts of 1-cells.
Suppose $(y,y',\tau_y)$ and $f\colon x \to y$ and let $\hat{\tau} \colon \tau^*y' \to y'$ be the cartesian lift of $\tau_y.Ff \colon Fx \to Fy \to Py'$ in $\B$. Then
$(f,\hat{\tau},\eta)\colon(x,\tau^*y',1_{Fx}) \to (y,y',\tau_y)$
where $\eta$ is 
\begin{equation*}
\cd[@R-6pt]
{
Fx \ar[dd]_{Ff} \ar[r]^{1} \ar[dr]_{Ff} & Fx \ar[d]^{Ff}\\
 \dtwocell[0.7]{ur}{r} & Fy \ar[d]^{\tau_y} \\
Fy \ar[r]_{\tau_y} & Py'
}.
\end{equation*}
This is a cartesian lift of $f$ for $P'$.

We need to show that there are cartesian lifts of 2-cells.
Suppose $(g,g',\tau_g)\colon (x,x',\tau_x) \to (y,y',\tau_y)$ and $\alpha \colon f \To g \colon x \to y$. Suppose also that $\tau_x$ is an adjoint equivalence. Now let $\hat{\alpha}\colon f' \to g'$ be the cartesian lift of the following composite at $g'$.
\begin{equation*}
\cd[@R-6pt@C+16pt]
{
 & Fx \dtwocell{r}{F\alpha} \ar@/^10pt/[r]^{Ff} \ar@/_10pt/[r]|{Fg} \ar[dr]_{\tau_x} & Fy \ar[dr]^{\tau_y} \dtwocell{d}{\tau_g} & \\
 Px' \ar[ur]^{\tau_x^\centerdot} \ar[rr]|1 \ar@/_10pt/[rrr]_{Pg'} & \dtwocell{u}{\epsilon}   & Px' \ar[r]^{Pg'}  & Py'
}
\end{equation*}
Then $(\alpha,\hat{\alpha})\colon (f,f',\tau_f)\To(g,g',\tau_g)$ where $\tau_f$ is
\begin{equation*}
\cd[@C+6pt]
{
Fx \ar[ddd]_{Ff} \ar[r]^{\tau_x} \ar[dr]_{1} & Px' \ar@/^12pt/[ddd]^{Pf'} \ar[d]_{\tau^\centerdot_x} \\
\urtwocell[0.65]{ur}{\eta} & Fx' \ar[d]_{Ff} \\
  & Fy'  \ar[d]_{\tau_y} \\
Fy \ar[r]_{\tau_y} \urtwocell{uur}{r} & Py'
}.
\end{equation*}
This is a cartesian lift of $\alpha$ for $P'$.

We need to show that cartesian 2-cells are closed under horizontal composition. The chosen cartesian lifts are cartesian precisely because their second component is cartesian for $P$. Since $P$ is a fibration we know that the second component of $(\alpha,\hat{\alpha})*(\beta,\hat{\beta}) = (\alpha*\beta,\hat{\alpha}*\hat{\beta})$ is also cartesian. Thus the chosen cartesian lifts are closed under horizontal composition and by proposition \ref{prop:chosen_2-cells_enough_strict} this is enough.
This makes $P'$ a fibration. 

Notice that cartesian lifts for $P'$ have cartesian maps in their second component so $F'$ is cartesian.
\end{proof}

\begin{Rk}
We say that a cartesian functor $F$ reflects cartesian maps when $Ff$ cartesian implies $f$ cartesian. It is worth noting that the $F'$ above reflects cartesian maps.
\end{Rk}


\begin{Rk}
Suppose $F\colon \C \to \D$ and $G\colon \B \to \D$ are homomorphisms. Technically, we cannot form the pullback of $F$ and $G$.
The following construction is in some sense the closest we can get. The ``pullback'' of $F$ and $G$ is the subcategory of $\comma{F}{G}$ containing:
all objects $(x,x',\tau_x)$ where $\tau_x \colon Fx \to Gx'$ is an \emph{identity};
all 1-cells $(f,f',\tau_f)$ where $\tau_f$ is an \emph{isomorphism}; and all 2-cells.
The functors $G^{''}$ and $F''$ are the projections onto the first and second components of $\C \times_{\D} \B$. This gives a pseudo-natural equivalence
\begin{equation*}
\cd[]
{
\C \times_\D \B \ar[r]^-{F''} \ar[d]_{G''} & \B \ar[d]^{G} \\
\C \ar[r]_{F} & \D \urtwocell{ul}{\tau}
}
\end{equation*}
This is actually an iso-comma object in $\bicat_2$: the 2-category of bicategories, homomorphisms and icons in the sense of \cite{lack2010}.

Suppose now that $G$ is a fibration. We can prove that $G^{''}$ is a fibration using essentially the same argument as Proposition \ref{prop:equiv_pullback}. Alternatively, we can prove an analogue of a result by Joyal and Street: when $G$ is a fibration the pullback has the same universal property as the equiv-comma (in the sense that the induced comparison is a biequivalence). Thus $G'$ and $G''$ are biequivalent in the slice over $\C$ and $G''$ is a fibration.
\end{Rk}

We've proved the following theorem.

\begin{Prop}
Let $\A\times_\B \E$ be the ``pullback'' of $P \colon \E \to \B$ and $F \colon \A \to \B$. 
If $P$ is a fibration then $P'$ is a fibration and $F'$ is cartesian.
\end{Prop}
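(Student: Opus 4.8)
The plan is to treat the ``pullback'' $\A\times_{\B}\E$ as the sub-bicategory of the oplax comma $\comma{F}{P}$ cut out by requiring $\tau_x$ to be an identity and $\tau_f$ to be invertible, and then to repeat the proof of Proposition~\ref{prop:equiv_pullback} with every occurrence of an adjoint equivalence $\tau$ replaced by an identity. Since replacing an equivalence by an identity only deletes coherence data, this should go through with no genuinely new difficulty; I expect the argument to be slightly \emph{shorter} than Proposition~\ref{prop:equiv_pullback} rather than longer, and I would take this direct route as the primary proof.

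First I would produce cartesian lifts of $1$-cells. Given an object $(y,y',\tau_y)$ of $\A\times_{\B}\E$ --- so $\tau_y$ is an identity and hence $Fy=Py'$ --- and a $1$-cell $f\colon x\to y$ in $\A$, take the cartesian lift $\widehat{f}'\colon (Ff)^{*}y'\to y'$ of $Ff\colon Fx\to Fy=Py'$ in $\E$; by Proposition~\ref{prop:nicer_lifting} we may assume $P\widehat{f}'=Ff$, so that $\bigl(x,\,(Ff)^{*}y',\,1_{Fx}\bigr)$ is an object of $\A\times_{\B}\E$ and $\bigl(f,\widehat{f}',\tau_f\bigr)$ is a $1$-cell, with $\tau_f$ assembled from the composition coherence of $P$. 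That this is a cartesian lift for $P'$ follows from the cartesian property of $\widehat{f}'$ in $\E$ together with local fibredness of $P$, exactly as in Proposition~\ref{prop:equiv_pullback}.

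Next I would lift $2$-cells. Given $(g,g',\tau_g)\colon(x,x',\tau_x)\to(y,y',\tau_y)$ in $\A\times_{\B}\E$ and $\alpha\colon f\To g$ in $\A$, take the cartesian lift $\widehat{\alpha}\colon f'\To g'$ in $\E$ of the analogue of the pasted $2$-cell appearing in Proposition~\ref{prop:equiv_pullback} --- which is now trivialised because $\tau_x,\tau_y$ are identities --- and set $(\alpha,\widehat{\alpha})\colon(f,f',\tau_f)\To(g,g',\tau_g)$ with $\tau_f$ the evident pasting of coherences. This is cartesian for $P'$ because its $\E$-component is cartesian for $P$. For closure under horizontal composition one checks, as before, that $(\alpha,\widehat{\alpha})*(\beta,\widehat{\beta})=(\alpha*\beta,\widehat{\alpha}*\widehat{\beta})$; since $P$ is a fibration $\widehat{\alpha}*\widehat{\beta}$ is cartesian, so by Proposition~\ref{chosen_2-cells_enough_weak} all cartesian $2$-cells are closed under horizontal composition. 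As $P'$ --- like $d_0$ on $\comma{F}{P}$ --- is a homomorphism preserving composition and identities strictly, this makes $P'$ a fibration. Finally $F'$ is cartesian: the chosen cartesian maps for $P'$ constructed above have cartesian $\E$-component, $F'$ is precisely projection onto that component, and every cartesian map is isomorphic to a chosen one, so by the reasoning of Proposition~\ref{chosen_2-cells_enough_weak} together with Proposition~\ref{weak_1-cells_iso_cartesian}, $F'$ preserves all cartesian maps.

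A second route, sketched in the preceding remark, is to observe that local fibredness of $P$ makes it iso-lifting, whence by Corollary~1 of \cite{joyal1993} the strict pullback cospan is already a bipullback; thus the inclusion $\A\times_{\B}\E\hookrightarrow\A\times_{\simeq}\E$ is a biequivalence over $\A$, and one transports the fibration structure of Proposition~\ref{prop:equiv_pullback} across it. The main obstacle on that route is that one must first verify that being a fibration is invariant under biequivalence in $\bicat/\A$ --- i.e.\ that cartesian lifts and their closure properties can be transported along a biequivalence over the base --- which, although true and plausible given that cartesianness is detected by a bipullback of hom-categories, is not recorded in the excerpt; this is why I would present the direct argument as the main proof and mention the $\cite{joyal1993}$-style argument only as an alternative.
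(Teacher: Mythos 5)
Your proposal is correct and matches the paper, whose ``proof'' of this proposition is precisely the preceding remark: it states that one can either rerun the argument of Proposition~\ref{prop:equiv_pullback} with the equivalences $\tau_x$ replaced by identities, or invoke the Joyal--Street-style observation that the pullback is biequivalent over the base to the equiv-comma when one leg is (locally) iso-lifting. You have simply fleshed out the first of these two routes in more detail than the paper bothers to, and your caveat about the second route is reasonable.
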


A quick investigation reveals that 2-fibrations are not closed under equiv-comma. They are however closed under iso-comma as defined below.

\begin{Con}[Iso-comma]
Suppose $F\colon C \to D$ and $G\colon B \to D$ are 2-functors.
The iso-comma $C \times_{\cong} B$ is the subcategory of $\comma{F}{G}$ containing:
all objects $(x,x',\tau_x)$ where $\tau_x \colon Fx \to Gx'$ is an \emph{isomorphism};
all 1-cells $(f,f',\tau_f)\colon (x,x',\tau_x) \to (y,y',\tau_y)$ where $\tau_f$ is an \emph{identity}; and
all 2-cells.
The functors $G'$ and $F'$ are the projections onto the first and second components of $\C \times_{\cong} \B$.
This gives a 2-natural isomorphism $\tau$ as follows.
\begin{equation*}
\cd[]
{
C \times_{\cong} B \ar[r]^-{F'} \ar[d]_{G'} & B \ar[d]^{G} \\
C \ar[r]_{F} & D \urtwocell{ul}{\tau}
}
\end{equation*}
\end{Con}

\begin{Prop}\label{prop:iso-comma}
Let $A\times_{\cong} E$ be the iso-comma of $P \colon E \to B$ and $F \colon A \to B$.
If $P$ is a 2-fibration then $P'$ is a 2-fibration and $F'$ is cartesian.
\end{Prop}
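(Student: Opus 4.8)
The plan is to adapt the proof of Proposition~\ref{prop:equiv_pullback} to the strict, iso-comma setting, where the key simplification is that every coherence datum $\tau_x$ is an honest isomorphism and every $\tau_f$ is an identity, so that the hom-categories of $A\times_{\cong}E$ are genuine (strict) pullbacks in $\Cat$ and the lifting properties hold on the nose rather than merely up to equivalence.

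First I would identify the hom-categories. For objects $X=(a,e,\tau_a)$ and $Y=(b,e',\tau_b)$ of $A\times_{\cong}E$, a $1$-cell $X\to Y$ is a pair $(f,f')$ with $f\colon a\to b$ in $A$, $f'\colon e\to e'$ in $E$ and $\tau_b.Ff=Pf'.\tau_a$, and a $2$-cell $(f,f')\To(g,g')$ is a pair $(\alpha,\alpha')$ with $P\alpha'.\tau_a=\tau_b.F\alpha$. Hence $(A\times_{\cong}E)(X,Y)$ is precisely the strict pullback in $\Cat$ of the conjugation functor $A(a,b)\to B(Pe,Pe')$, $f\mapsto\tau_b.Ff.\tau_a^{-1}$, along $P_{ee'}\colon E(e,e')\to B(Pe,Pe')$, with $P'_{XY}$ the first projection and $F'_{XY}$ the second. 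Since $P$ is a $2$-fibration it is locally fibred, so $P_{ee'}$ is a fibration; by the standard properties of pullbacks of fibrations recalled in the introduction, $P'_{XY}$ is then a fibration, $F'_{XY}$ is cartesian and reflects cartesian maps, and a $2$-cell $(\alpha,\alpha')$ is cartesian for $P'_{XY}$ if and only if $\alpha'$ is cartesian for $P$. In particular $P'$ is locally fibred.

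Next I would construct the cartesian lift of a $1$-cell. Given $Y=(b,e',\tau_b)$ and $f\colon a\to b$ in $A$, let $\hat f\colon\hat e\to e'$ be the cartesian lift in $E$ of the $1$-cell $\tau_b.Ff\colon Fa\to Fb\to Pe'$; then $P\hat e=Fa$, so $X=(a,\hat e,1_{Fa})$ is an object of $A\times_{\cong}E$ and $(f,\hat f)\colon X\to Y$ is a $1$-cell with identity coherence datum. I would check that $(f,\hat f)$ is cartesian for $P'$ by unwinding the hom-category criterion for cartesian $1$-cells: substituting the pullback descriptions above, the comparison functor one must show to be an isomorphism of categories is exactly the one provided by the $1$-cell and $2$-cell lifting properties of the cartesian $\hat f$. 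The cartesian lift of a $2$-cell is obtained the same way: given $(g,g')\colon X\to Y$ and $\alpha\colon f\To g$ in $A$, the lift is $(\alpha,\hat\alpha)$, where $\hat\alpha\colon f'\To g'$ is the cartesian lift for $P$ of $\tau_b.F\alpha.\tau_a^{-1}$ at $g'$ (it exists since $P$ is locally fibred, and it determines the source $1$-cell $(f,f')$); it is cartesian for $P'$ by the characterisation of the previous paragraph. Finally, the chosen cartesian $2$-cell lifts compose componentwise, with second component $\hat\alpha*\hat\beta$, which is cartesian for $P$ because $P$ is a $2$-fibration; hence chosen cartesian $2$-cells are closed under horizontal composition, and by Proposition~\ref{prop:chosen_2-cells_enough_strict} so are all cartesian $2$-cells. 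This establishes that $P'$ is a $2$-fibration. The last assertion is then immediate: $F'$ carries the chosen cartesian $1$- and $2$-cell lifts above to $\hat f$ and $\hat\alpha$, which are cartesian for $P$, so $F'$ preserves chosen cartesian maps, and by factoring an arbitrary cartesian map through a chosen one together with an isomorphism, as in Proposition~\ref{prop:preserve_chosen_maps_enough_strict}, $F'$ preserves all cartesian maps.

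I expect the main obstacle to be the verification that the $1$-cell lift $(f,\hat f)$ is cartesian in the full sense of Definition~\ref{def:cartesian_1-cell_strict}, that is, that it enjoys both the $1$-cell and the $2$-cell lifting property. Although this follows formally once the hom-categories are recognised as strict pullbacks, one must keep careful track of the identity coherence data $\tau_f$: it is precisely the fact that these can be chosen to be \emph{strict} identities (rather than merely isomorphisms, as in the equiv-comma) that makes $P'$ an honest $2$-fibration here, and that explains why $2$-fibrations are closed under iso-comma but not under equiv-comma. Everything else is a routine transcription of the classical argument that pullbacks of fibrations are fibrations to the locally fibred $2$-dimensional setting, exactly as in Proposition~\ref{prop:equiv_pullback}.
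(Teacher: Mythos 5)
Your proposal is correct and follows essentially the same route as the paper, whose own proof of this proposition simply defers to the equiv-comma argument of Proposition~\ref{prop:equiv_pullback}: your chosen lifts (the cartesian lift of $\tau_b.Ff$ for $1$-cells, the local cartesian lift of $\tau_b.F\alpha.\tau_a^{-1}$ for $2$-cells, closure under horizontal composition via Proposition~\ref{prop:chosen_2-cells_enough_strict}) are exactly what that argument specialises to when the coherence data become identities. Your observation that the hom-categories of $A\times_{\cong}E$ are strict pullbacks of the local fibrations $P_{ee'}$, so that local fibredness and the characterisation of cartesian $2$-cells follow from the classical pullback-of-fibrations facts, is a tidy packaging of the same computation rather than a genuinely different method.
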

\begin{proof}
This proof is essentially the same as that for equiv-commas. There are no major differences.
\end{proof}

\begin{Con}[Pullback]
Suppose $F\colon C \to D$ and $G\colon B \to D$ are 2-functors.
The pullback $C \times_D B$ is the subcategory of $\comma{F}{G}$ containing:
all objects $(x,x',\tau_x)$ where $\tau_x \colon Fx \to Gx'$ is an \emph{identity};
all 1-cells $(f,f',\tau_f)\colon (x,x',\tau_x) \to (y,y',\tau_y)$ where $\tau_f$ is an \emph{identity}; and
all 2-cells.
The functors $G'$ and $F'$ are the projections onto the first and second components of $C \times B$.
This gives commuting square.
\begin{equation*}
\cd[]
{
C \times_D B \ar[r]^-{F'} \ar[d]_{G'} & B \ar[d]^{G} \\
C \ar[r]_{F} & D
}
\end{equation*}
\end{Con}

\begin{Prop}
Let $A\times_B E$ be the pullback of $P \colon E \to B$ and $F \colon A \to B$.
If $P$ is a 2-fibration then $P'$ is a 2-fibration and $F'$ is cartesian.
\end{Prop}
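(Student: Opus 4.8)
\emph{Overall plan.} The argument is the strict analogue of Proposition~\ref{prop:iso-comma}: since $F$ and $P$ are 2-functors and the pullback is strict, every cell of $A\times_B E$ is a pair whose two components agree under $F$ and $P$, composition is computed componentwise, and the cartesian structure of $P$ simply transports across the square
\begin{equation*}
\cd[]{
A\times_B E \ar[r]^-{F'} \ar[d]_{P'} & E \ar[d]^{P} \\
A \ar[r]_{F} & B
}.
\end{equation*}
In particular $P'$ and $F'$ preserve composition and identities on the nose and $PF'=FP'$. I will equip $P'$ with the cleavage whose chosen lifts are those exhibited below.

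\emph{Cartesian lifts of 1-cells.} Given an object $(y,y')$ of $A\times_B E$ (so $Fy=Py'$) and $f\colon x\to y$ in $A$, let $\c{Ff}{y'}\colon (Ff)^*y'\to y'$ be the chosen cartesian lift of $Ff\colon Fx\to Py'$ for $P$. Then $P\big((Ff)^*y'\big)=Fx$, so $\big(x,(Ff)^*y'\big)$ is an object of $A\times_B E$ and $\big(f,\c{Ff}{y'}\big)\colon \big(x,(Ff)^*y'\big)\to (y,y')$ is a 1-cell over $f$. It is cartesian: given $(h,h')\colon (z,z')\to (y,y')$ and $u\colon z\to x$ in $A$ with $h=f.u$, any lift of $u$ through $\big(f,\c{Ff}{y'}\big)$ must be of the form $(u,\hat u)$ with $P\hat u=Fu$ and $\c{Ff}{y'}.\hat u=h'$; since $Ph'=Fh=Ff.Fu=P\c{Ff}{y'}.Fu$, the cartesian property of $\c{Ff}{y'}$ in $E$ supplies a unique such $\hat u$, hence a unique lift.

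\emph{Local fibredness and cartesian lifts of 2-cells.} For objects $(x,x')$, $(y,y')$ the hom-category $(A\times_B E)\big((x,x'),(y,y')\big)$ is the pullback of $P_{x',y'}\colon E(x',y')\to B(Fx,Fy)$ along $F_{x,y}\colon A(x,y)\to B(Fx,Fy)$, and $P'_{(x,x'),(y,y')}$ is the projection onto $A(x,y)$; since $P_{x',y'}$ is a fibration and fibrations in $\Cat$ are stable under pullback, $P'_{(x,x'),(y,y')}$ is a fibration, so $P'$ is locally fibred. Concretely, the chosen cartesian lift of $\alpha\colon f\To g$ in $A$ at $(g,g')$ is $\big(\alpha,\c{F\alpha}{g'}\big)$, uniqueness of lifted 2-cells being inherited from $P$.

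\emph{Horizontal composition and $F'$ cartesian.} Every chosen cartesian 1- or 2-cell of $A\times_B E$ has a cartesian $E$-component, and $F'$ sends it to that component; hence $F'$ preserves chosen cartesian maps, and since $P$ is a 2-fibration the componentwise horizontal composite $\big(\alpha,\c{F\alpha}{g'}\big)*\big(\beta,\c{F\beta}{k'}\big)=\big(\alpha*\beta,\ \c{F\alpha}{g'}*\c{F\beta}{k'}\big)$ again has cartesian $E$-component, so chosen cartesian 2-cells are closed under horizontal composition. By Proposition~\ref{prop:chosen_2-cells_enough_strict} all cartesian 2-cells are closed under horizontal composition, so $P'$ is a 2-fibration; and by the argument of Proposition~\ref{prop:preserve_chosen_maps_enough_strict} (factor an arbitrary cartesian map through a chosen one by an isomorphism, both of which $F'$ preserves) $F'$ preserves all cartesian maps, i.e.\ $F'$ is cartesian. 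The only point needing any care is the bookkeeping of the strict pullback constraints on cells; there is no real obstacle, which is why — as with Propositions~\ref{prop:iso-comma} and~\ref{prop:equiv_pullback} — the proof differs only superficially from the classical one.
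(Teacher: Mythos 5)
Your proof is correct and follows essentially the same route as the paper, which simply observes that the equiv-comma argument (Proposition \ref{prop:equiv_pullback}) specialises to the strict pullback with all the mediating isomorphisms becoming equalities — exactly the componentwise construction of chosen lifts, the pullback-of-hom-categories argument for local fibredness, and the reduction to chosen cartesian 2-cells via Proposition \ref{prop:chosen_2-cells_enough_strict} that you spell out. The only detail you leave implicit is the 2-dimensional lifting property of the chosen cartesian 1-cell (clause (2) of Definition \ref{def:cartesian_1-cell_strict}), but it follows by the identical argument from the corresponding property of $\c{Ff}{y'}$ in $E$.
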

\begin{proof}
The proof is essentially the same as Proposition \ref{prop:iso-comma} but much easier because the isomorphisms are equalities.
\end{proof}

\bibliographystyle{amsalpha}

\providecommand{\bysame}{\leavevmode\hbox to3em{\hrulefill}\thinspace}
\providecommand{\MR}{\relax\ifhmode\unskip\space\fi MR }
\providecommand{\MRhref}[2]{%
  \href{http://www.ams.org/mathscinet-getitem?mr=#1}{#2}
}
\providecommand{\href}[2]{#2}

\end{document}